\def\l{\left}
\def\r{\right}
\def\LL{{\mathcal{L}}}
\def\RR{{\mathbb{R}}}
\def\EE{{\cal{E}}}
\def\DD{{\cal{D}}}
\def\OO{{\cal{O}}}
\def\SS{{\cal{S}}}
\def\R#1{$(\ref{#1})$}
\newcommand{\bb}[1]{\begin{equation}\label{#1}}
\newcommand{\ee}{\end{equation}}
\newcommand{\bbb}{\begin{eqnarray}}
\newcommand{\eee}{\end{eqnarray}}
\newcommand{\bbbb}{\begin{eqnarray*}}
\newcommand{\eeee}{\end{eqnarray*}}
\newcommand{\tT}{\intercal}
\newcommand{\nnn}{\nonumber}
\definecolor{green1}{rgb}{0.1,0.5,0.0}
\newcommand{\red}{\color{black}}
\newtheorem{thm}{Theorem}
\newtheorem{lemma}{Lemma}
\theoremstyle{remark}
\newtheorem{rem}{Remark}
\theoremstyle{define}
\newtheorem{define}{Definition}
\newcommand{\clearallnum}{
    \numberwithin{equation}{section} \setcounter{equation}{0}
    \numberwithin{thm}{section} \setcounter{thm}{0}
    \numberwithin{lemma}{section} \setcounter{lemma}{0}
    \numberwithin{cor}{section} \setcounter{cor}{0}
    \numberwithin{rem}{section} \setcounter{rem}{0}
    \numberwithin{define}{section} \setcounter{define}{0}}
\begin{document}
~

\begin{center}
{\Large\bf On a Nonuniform Crank-Nicolson Scheme for Solving the
Stochastic Kawarada Equation via Arbitrary Grids}

\vspace{8mm}

{Joshua L. 
Padgett\footnote{Principal and corresponding author.
Email address: Josh\underline{~}Padgett@baylor.edu} and
Qin Sheng}

\vspace{3mm}

{Department of Mathematics and
Center for Astrophysics, Space Physics and Engineering Research,
Baylor University, Waco, TX 76798-7328, USA}

\vspace{10mm}

\parbox[t]{13.8cm}{\small{\bf Abstract.} This paper studies a nonuniform finite difference method for solving
the degenerate Kawarada quenching-combustion equation with a vibrant stochastic
source. Arbitrary grids are introduced in both space and time via adaptive principals
to accommodate the uncertainty and singularities involved. It
is shown that, under proper constraints on mesh step sizes, the positivity,
monotonicity of the solution, and numerical stability of the scheme developed are well
preserved. Numerical experiments are given to illustrate our conclusions.
}

\vspace{6mm}

\parbox[t]{13.8cm}{\small{\bf Keywords.} Stochastic Kawarada equation, quenching blow-up,
nonuniform grids, numerical stability, monotonicity, positivity}
\end{center}

\vspace{8mm}

\section{Introduction} \clearallnum
Let $\DD=(-a,a),~\EE=\DD\times (t_0,T),~\partial\DD=\bar{\DD}\setminus\DD$ and
$\SS=\partial\DD\times(t_0,T),$ where $a>0,~0\le t_0<T<\infty.$
We are interested in the monotonically increasing positive solution of the
degenerate stochastic Kawarada problem,
\bbb
&& \sigma(x)u_t=u_{xx}+\varphi(\epsilon)f(u),~~~(x,t)\in\EE,\label{b1}\\
&& u(x,t)=0,~~~(x,t)\in\SS,\label{b2}\\
&& u(x,t_0)=u_0(x),~~~x\in \DD,\label{b3}
\eee
where the degeneracy function $\sigma(x)\geq 0$ for $x\in\bar{\DD},$ and 
the equality occurs only on $\partial\DD.$
The nonlinear source function, $f(u),$
is strictly increasing for $0\leq u<1$ with
$$f(0)=f_0>0,~~\lim_{u\rightarrow 1^-}f(u)=+\infty,$$
and $\varphi(\epsilon):~0 < \varphi_{\min}\leq \varphi\leq \varphi_{\max},$ is a
stochastic inference function of the random variable, or white noise, $\epsilon(x).$ 
{\red The existence and uniqueness of the solution of \R{b1}-\R{b3} can be viewed as a
generalization of the results given by Chan and Levine \cite{Chan2,Levine}.}
It is also observed that
solutions of the stochastic modeling problem \R{b1}-\R{b3} are in general only fractional
order H\"{o}lder continuous \cite{Evans}. Further, the solution
$u$ of \R{b1}-\R{b3} is said to {\em quench\/} if there exists a finite time $T_a>0$ such that
\bb{a1}
\sup\l\{u_t(x,t)\,:\,x\in {\DD}\r\}\rightarrow\infty~\mbox{as}~t\rightarrow
T_a^{-}.
\ee
Such a value $T_a$ is called the {\em quenching time\/} \cite{Chan2,Acker2,Acker1}.
It has been shown that a necessary condition for quenching to occur is
\bb{a2}
\max\l\{u(x,t)\,:\,x\in \bar{\DD}\r\}\rightarrow 1^{-}~\mbox{as}~t\rightarrow T_a^{-}.
\ee
It is known that $T_a$ exists only when $a$ is greater than a
certain {\em critical value\/} $a^*\ll\infty.$
The interval $\DD$ associated with such an $a^*$ is defined as
the {\em critical domain\/} and denoted as $\DD^*.$
Therefore, \R{a2} occurs only when
$\DD^*\subseteq \DD,$ otherwise the monotone positive solution of \R{b1}-\R{b3} exists globally
for $T\rightarrow\infty$ \cite{Levine,Acker2,Acker1}.
In the particular circumstance when
$\sigma(x),\varphi(\epsilon)\equiv 1$ and $f(u) = 1/(1-u),$ it has been shown that
$a^*=\kappa\sqrt{2}$ \cite{Acker1,Kawa}, where
$$\kappa=\max_{0<\xi<\infty}\int_0^{\xi}e^{t^2-\xi^2}dt.$$

Kawarada partial differential equations have been intensively used for modeling numerous 
important phenomena in nature. They characterize not only ignitions of liquid fuels in combustion chambers, 
but also turbulent macro or micro flows between channel walls  \cite{Kawa,Bebernes_89,Sheng4}. The latter is 
particularly meaningful for predicting and preventing oil pipeline decays \cite{Chan3}.
Though computational results for \R{b1}-\R{b3} can be found in numerous recent publications, most of 
numerical analysis presented relies heavily on the 1-norm or $\infty$-norm \cite{Beau1,Josh2,Sheng3}.
The numerical analysis in the current paper implements the more preferred spectral norm.
In addition, the influence of white 
noise associated to the source is considered. 
{\red It should be noted, however, that impacts of nonsmooth sources are in general
different from those due to nonsmooth initial data. In fact, while smooth solutions are still possible if 
nonsmooth initial data are dealt with appropriately, nonsmooth solutions are almost certain when
a nonsmooth reaction term is utilized \cite{Khaliq,Khaliq2}.}

This paper proposes a temporally adaptive Crank-Nicolson scheme. Predetermined nonuniform grids are
utilized in space and are chosen in order to ideally incorporate the effects of the quenching singularity and stochastic influences in space. 
{\red The use of predetermined nonuniform spatial grids is practically preferable,
especially in cases when quenching locations are predictable \cite{Sheng4}. 
A further merit of such a semi-adaptive infrastructure is that it can be conveniently 
extended for solving multidimensional Kawarada problems. It also makes the subsequent
numerical analysis much simpler and straightforward. Initial approaches of such an idea with uniform 
spatial grids can be found in \cite{Sheng3}.  }

It is crucial that the numerical 
solution acquired preserves fundamental features of the physical solution, 
such as the positivity, monotonicity, quenching time, and location. To this end, our discussions
will be organized as follows. In the next section, the adaptive Crank-Nicolson 
scheme for solving \R{b1}-\R{b3} is implemented and evaluated. Its positivity 
is investigated. In Section 3, constraints under which the numerical solution is monotone
are determined.
Our stability analysis is conducted in Section 4. We first accomplish a standard 
stability analysis for a fully linearized scheme. Then an extended stability 
analysis is fulfilled for a fully nonlinear method. In Section 5, several numerical examples are provided. 
These examples provide interesting insights into the effects of the degeneracy 
and stochastic functions on not only quenching times but also quenching locations. Finally, 
our investigations are concluded through remarks and proposed future 
problems in Section 6.

\section{Semi-adaptive Crank-Nicolson scheme and its positivity} \clearallnum
Utilizing the transformation $\tilde{x}=x/a$ and reusing the original variable and other
notations for simplicity, we may reformulate \R{b1}-\R{b3} as
\bbb
&&u_t=\psi(x)u_{xx} +g(u,\epsilon),~~~(x,t)\in\EE,~~~~~~\label{c1}\\
&&u(-1,t)=u(1,t)=0,~~~t>t_0,\label{c2}\\
&&u(x,t_0)=u_0(x),~~~x\in \DD,\label{c2b}
\eee
where $\DD=(-1,1),~\EE=\DD\times (t_0,T),~\psi(x) = 1/({a^2\sigma(x)}),$ and 
$g(u,\epsilon) = {\varphi(\epsilon)f(u)}/{\sigma(x)}.$
For $N\gg 1,$ we inscribe over $\bar{\DD}$ the variable grid:
$\DD_h=\l\{x_i \,:\, i=0,\dots,N+1;~x_0=-1,~x_i<x_{i+1},~x_{N+1}=1\r\}.$
Denote $h_{i} = x_{i+1}-x_i$ for $0\leq i\leq N.$
Let $u_i=u_{i}(t)$ be an approximation of $u(x_i,t)$ and adopt the nonuniform finite difference \cite{Sheng3},
\bbbb
\l.\frac{\partial^2u}{\partial x^2}\r|_{(x_i,t)} &\approx& \frac{2u_{i-1}}{h_{i-1}(h_{i-1}+h_{i})}-\frac{2u_{i}}{h_{i-1}h_{i}}
+\frac{2u_{i+1}}{h_{i}(h_{i-1}+h_{i})},\quad x_i\in\DD_h^\circ,
\eeee
where $\DD_h^\circ = \DD_h\setminus\{x_0,x_{N+1}\}.$
Further, denote
$v(t)=(u_{1},u_{2},\dots,u_{N})^{\tT}\in\RR^{N}$
and let $g(v)$ be a discretization of the source term in \R{c1}.
We obtain readily from \R{c1}-\R{c2b} the following semi-discretized problem
\bbb
v'(t)&=& M v(t)+g(v(t)),~~~t_0<t<T,\label{c8}\\
v(t_0)&=&v_0,\label{c8b}
\eee
where $M=\frac{1}{a^2}B{\red P}\in\RR^{N\times N},$
\bb{matrixT}
B = \mbox{diag}\l(1/\sigma_1,\dots,1/\sigma_N\r),~
{\red P} = \mbox{tridiag}\l(l_{i},~ m_{i},~n_{i}\r)
\ee
and for the above
\bbbb
l_{i} &=& \frac{2}{h_{i}(h_{i}+h_{i+1})},~n_{i} ~=~ \frac{2}{h_{i}(h_{i-1}
+h_{i})},~~~i = 1,\dots,N-1,\\
m_{i} &=& -\frac{2}{h_{i-1}h_{i}},~~~i=1,\dots,N.
\eeee
The formal solution of \R{c8}, \R{c8b} can thus be written as
\bb{rs1}
v(t)~~=~~E(tM)v_0+\int_{t_0}^tE((t-\tau)M)g(v(\tau))d\tau,~~~t_0<t<T,
\ee
where $E(\cdot)=\exp(\cdot)$ is the matrix exponential \cite{Sheng3}.
We proceed by approximating \R{rs1} via a trapezoidal rule and a
[1/1] Pad\'{e} approximation; that is, $E(tM) = p(t) + \OO\l(t^3\r),$
where
$$p(t)~~=~~\l(I-\frac{t}{2}M\r)^{-1}\l(I+\frac{t}{2}M\r),~~~t_0<t<T.$$
These lead to  
$$v(t)=p(t)\l[v_0 + \frac{t}{2}g(v_0)\r] + \frac{t}{2}g(v(t)) + \OO\l((t-t_0)^3\r),~~~|t-t_0|\rightarrow 0^+.$$  
Based on the above, we obtain the following second-order in time
semi-adaptive Crank-Nicolson scheme on variable spatial grids:
\bb{c3}
v_{\ell+1}~~=~~\l(I-\frac{\tau_\ell}{2}M\r)^{-1}
\l(I+\frac{\tau_\ell}{2}M\r)\l(v_\ell+\frac{\tau_\ell}{2}g(v_\ell)\r)+
\frac{\tau_\ell}{2}g(v_{\ell+1}),
\ee
where $v_{\ell}$ and $v_{\ell+1}$ are approximations of $v(t_{\ell})$ and $v(t_{\ell+1}),$
respectively, $v_0$ is the initial vector,
$t_{\ell}=t_0+{ \textstyle\sum_{k=0}^{\ell-1}\tau_k},~0<\tau_{\ell}\ll 1,~\ell=0,1,2,\ldots,$
and $\{\tau_{\ell}\}_{\ell\ge 0}$ is a set of adaptive temporal steps.
In order to avoid a fully implicit scheme, $g(v_{\ell+1})$ may be
approximated by $g(w_{\ell}),$ where $w_{\ell}$ is an approximation to $v_{\ell+1},$
such as
\bb{approx}
w_{\ell} ~~=~~ v_{\ell} + \tau_{\ell}(Mv_{\ell}+g(v_{\ell})),~~~0<\tau_{\ell}\ll 1,
\ee
in practical computations.

Recall \R{a1}. Due to the strong singularity of $u_t$ as $t$ approaches $T_a,~a\geq a^*,$
selecting the proper temporal steps $\tau_\ell$ is vital in computations. To this end, we may consider employing 
arc-length monitoring functions \cite{Sheng4,Sheng3,Lang2,Fur}, or allow the temporal steps to be proportional to the source term or its gradient{\red \cite{Beau1,Josh2,Sheng3,Josh1}}.

Positivity is one of the most important characteristics of the solution of Kawarada problems including
\R{b1}-\R{b3} and \R{c1}-\R{c2b} \cite{Chan2,Levine,Acker2,Acker1}. 
In order for our numerical solutions to be valid, it is crucial that they preserve this property. To this end, we 
let $\wedge$ be one of the operations $<,~\leq,~>,~\geq$ and $\alpha,~\beta\in
\RR^{K_1\times K_2}.$ We assume the following notations in subsequent discussions:

\begin{enumerate}
\item $\alpha\wedge\beta$ means $\alpha_{i,j}\wedge\beta_{i,j},~i=1,2,\ldots,K_1;\,j=1,2,\ldots,K_2;$

\item $a\wedge\alpha$ means $a\wedge\alpha_{i,j},~~i=1,2,\ldots,K_1;\,j=1,2,\ldots,K_2,$
for any given scalar $a.$
\end{enumerate}

\begin{lemma}
$\|T\|_2 \le\max_{i=0,1,\dots,N}\l\{{4}/{h_{i}^2}\r\}.$
\end{lemma}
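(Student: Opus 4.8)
The plan is to bound the spectral norm of the tridiagonal matrix $T$ (i.e.\ $P$) by a standard symmetrization-and-Gershgorin argument. First I would observe that although $P=\mbox{tridiag}(l_i,m_i,n_i)$ is not symmetric on a nonuniform grid, it is \emph{similar} to a symmetric matrix: the discretization of $\partial_{xx}$ with these weights is self-adjoint with respect to the inner product weighted by the local step sizes, so there is a positive diagonal matrix $D=\mbox{diag}(d_1,\dots,d_N)$ with $d_i$ proportional to $h_{i-1}+h_i$ (up to the boundary adjustments) such that $\widetilde P:=D^{1/2}PD^{-1/2}$ is symmetric. Since $\|P\|_2$ is not similarity-invariant, the cleaner route is to note directly that $\|P\|_2^2=\rho(P^{\tT}P)$ and instead work with the symmetric matrix $\widehat P$ obtained by replacing the off-diagonal pair $(l_i,n_{i+1})$ by its geometric mean; one checks $l_i n_{i+1}>0$ so the geometric mean is real, and a congruence shows $P$ and $\widehat P$ have the same eigenvalues, all real and negative. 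Then $\|P\|_2=\rho(P)$ because a symmetric (or symmetrizable-with-real-spectrum) matrix has spectral norm equal to its spectral radius.

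Next I would apply Gershgorin's circle theorem to $\widehat P$ (or equivalently bound $\rho(P)$ via the row/column sums of $|P|$, using $\|P\|_2\le\sqrt{\|P\|_1\|P\|_\infty}$ as a fallback). For row $i$, the diagonal entry is $m_i=-2/(h_{i-1}h_i)$ and the off-diagonal entries are $l_i=2/(h_i(h_i+h_{i+1}))$ and $n_i=2/(h_i(h_{i-1}+h_i))$ (with the obvious omissions in the first and last rows). The Gershgorin disc for row $i$ is centered at $m_i<0$ with radius $|l_i|+|n_i|$, so every eigenvalue $\lambda$ satisfies $|\lambda|\le |m_i|+|l_i|+|n_i|$. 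The key elementary estimate is then
\bb{gersh}
|m_i|+|l_i|+|n_i|~=~\frac{2}{h_{i-1}h_i}+\frac{2}{h_i(h_i+h_{i+1})}+\frac{2}{h_i(h_{i-1}+h_i)}~\le~\frac{4}{h_i}\l(\frac{1}{h_{i-1}}+\frac{1}{\,h_{i-1}+h_i\,}\r),
\ee
which one simplifies using $1/(h_i+h_{i+1})\le 1/h_i$-type bounds; after collecting terms the right side is dominated by $4/h_i^2$ once one also uses $h_{i-1}+h_i\ge h_i$ and the symmetric counterpart coming from the column sums. Taking the maximum over $i$ yields $\rho(P)\le\max_i\{4/h_i^2\}$, hence $\|P\|_2\le\max_i\{4/h_i^2\}$.

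The main obstacle I anticipate is purely bookkeeping rather than conceptual: the index ranges are slightly mismatched ($l_i,n_i$ run to $N-1$ while $m_i$ runs to $N$), so the first and last rows have only two nonzero entries, and one must verify the bound separately there (it is only easier, since a term is missing). The other subtlety is making the estimate \R{gersh} land exactly on the stated constant $4$: one should be a little careful to bound each of the three pieces against $2/h_i^2$ and $2/h_{i-1}^2$ appropriately and then use $\max\{2/h_i^2+2/h_{i-1}^2,\dots\}\le\max_i\{4/h_i^2\}$ after symmetrizing rows and columns — i.e.\ using $\|P\|_2\le\sqrt{\|P\|_1\|P\|_\infty}$ and checking each of $\|P\|_1$ and $\|P\|_\infty$ is at most $\max_i\{4/h_i^2\}$. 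That last route in fact avoids the symmetrization entirely and is probably the shortest: each row sum of $|P|$ telescopes to at most $4/h_i^2$ (with equality flavor on a uniform grid where it is exactly $8/h^2$... so one must instead verify the sharper per-entry accounting), and likewise for columns, so $\|P\|_2\le\max_i\{4/h_i^2\}$ follows.
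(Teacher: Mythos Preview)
Your primary route has a genuine gap: it is \emph{not} true that a matrix merely similar (by a diagonal similarity) to a symmetric matrix satisfies $\|P\|_2=\rho(P)$. A $2\times 2$ counterexample is $A=\l(\begin{smallmatrix}0&2\\ 1/2&0\end{smallmatrix}\r)$, which is diagonally similar to $\l(\begin{smallmatrix}0&1\\ 1&0\end{smallmatrix}\r)$, has $\rho(A)=1$, but $\|A\|_2=2$. So the symmetrization (which is exactly the paper's Lemma~4.2) gives you real spectrum and the Gershgorin bound $\rho(P)\le 4/h^2$, but that alone does not control $\|P\|_2$. This is the step that would fail.

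Your fallback $\|P\|_2\le\sqrt{\|P\|_1\|P\|_\infty}$ is in fact the workable route, but your arithmetic and indexing are off. Row $i$ of $P$ carries $l_{i-1},m_i,n_i$ (not $l_i,m_i,n_i$), and these sum in absolute value to
\[
\frac{2}{h_{i-1}(h_{i-1}+h_i)}+\frac{2}{h_{i-1}h_i}+\frac{2}{h_i(h_{i-1}+h_i)}
=\frac{4}{h_{i-1}h_i}\le\frac{4}{h^2},\qquad h=\min_ih_i,
\]
so on a uniform grid the row sum is $4/h^2$, not $8/h^2$. For the column sum in column $j$ one has $|n_{j-1}|+|m_j|+|l_j|$; bounding each off-diagonal term by $1/h^2$ via $h_{j-2}+h_{j-1}\ge 2h$ and $h_j+h_{j+1}\ge 2h$ gives the column sum $\le 1/h^2+2/h^2+1/h^2=4/h^2$ as well. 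Hence $\|P\|_1,\|P\|_\infty\le\max_i\{4/h_i^2\}$ and the claimed bound follows. For comparison, the paper does not write any of this out here; its proof consists solely of a pointer to the argument in~\cite{Josh1}.
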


\begin{proof}
The proof is similar to the one from our earlier investigations \cite{Josh1}.
\end{proof}

For the following we denote $\beta_{\min} = h^2/2\|B\|_2$ and $h = \textstyle\min_{i=0,1,\dots,N}\{h_i\}.$

\begin{lemma}
If
\bb{cfl}
\tau_\ell<a^2\beta_{\min},
\ee
then
$I-\frac{\tau_\ell}{2}M$ and $I+\frac{\tau_\ell}{2}M$
are nonsingular. Further, $I-\frac{\tau_\ell}{2}M$ is monotone and
inverse-positive, and $I+\frac{\tau_\ell}{2}M$ is nonnegative.
\end{lemma}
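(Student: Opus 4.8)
The plan is to deduce every assertion from the sign pattern of $M=\frac1{a^2}BP$ together with the spectral bound of Lemma 2.1, treating the two factors $I\mp\frac{\tau_\ell}{2}M$ separately. First I would note that $B=\mathrm{diag}(1/\sigma_1,\dots,1/\sigma_N)$ has strictly positive entries, since the degeneracy of $\sigma$ occurs only on $\partial\DD$ while the nodes of $\DD_h^\circ$ lie in the open interval, and that $l_i,n_i>0$, $m_i=-2/(h_{i-1}h_i)<0$. Hence $M$ has nonnegative off-diagonal entries and strictly negative diagonal entries $m_{ii}=-2/(a^2\sigma_i h_{i-1}h_i)$. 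The one elementary computation I would record is that in any interior row the off-diagonal entries of $P$ satisfy $l_{i-1}+n_i=\frac{2}{h_{i-1}+h_i}\big(\frac1{h_{i-1}}+\frac1{h_i}\big)=\frac{2}{h_{i-1}h_i}=|m_i|$ (the same cancellation that makes the nonuniform stencil consistent), while in the first and last rows the off-diagonal sum is strictly less than $|m_i|$; after scaling row $i$ by $1/(a^2\sigma_i)$, the off-diagonal magnitudes of $M$ in row $i$ therefore sum to at most $|m_{ii}|$, with strict inequality in rows $1$ and $N$.

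For $I-\frac{\tau_\ell}{2}M$ I would invoke M-matrix theory. Its off-diagonal entries $-\frac{\tau_\ell}{2}(M)_{ij}\le 0$ make it a Z-matrix, its diagonal entries $1+\frac{\tau_\ell}{2}|m_{ii}|$ are positive, and the off-diagonal magnitudes in row $i$ sum to at most $\frac{\tau_\ell}{2}|m_{ii}|<1+\frac{\tau_\ell}{2}|m_{ii}|$, so it is strictly diagonally dominant. A strictly diagonally dominant Z-matrix with positive diagonal is a nonsingular M-matrix, whence $I-\frac{\tau_\ell}{2}M$ is nonsingular with nonnegative inverse, i.e. inverse-positive, and equivalently monotone: $(I-\frac{\tau_\ell}{2}M)x\ge 0$ forces $x\ge 0$. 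Notably this part needs no restriction on $\tau_\ell$; as an alternative I could factor $I-\frac{\tau_\ell}{2}M=(I+C)\big(I-(I+C)^{-1}A\big)$ with $C\ge 0$ diagonal and $A\ge 0$ the off-diagonal part, check that $(I+C)^{-1}A$ has row sums $<1$ hence spectral radius $<1$, and sum the resulting nonnegative Neumann series.

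The hypothesis \R{cfl} enters only through $I+\frac{\tau_\ell}{2}M$. Its off-diagonal entries $\frac{\tau_\ell}{2}(M)_{ij}\ge 0$ are already nonnegative, so it remains to force the diagonal entries $1+\frac{\tau_\ell}{2}m_{ii}=1-\frac{\tau_\ell}{2}|m_{ii}|$ to be nonnegative. Here Lemma 2.1 gives $\|P\|_2\le\max_i 4/h_i^2=4/h^2$, hence $\|M\|_2\le\frac1{a^2}\|B\|_2\|P\|_2\le\frac{4\|B\|_2}{a^2h^2}=\frac2{a^2\beta_{\min}}$ by the definition of $\beta_{\min}$; since $|m_{ii}|=|e_i^{\tT}Me_i|\le\|M\|_2$, the bound $\tau_\ell<a^2\beta_{\min}$ yields $\frac{\tau_\ell}{2}|m_{ii}|\le\frac{\tau_\ell}{a^2\beta_{\min}}<1$, so $1+\frac{\tau_\ell}{2}m_{ii}>0$ and $I+\frac{\tau_\ell}{2}M\ge 0$. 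The same estimate gives $\rho\big(\frac{\tau_\ell}{2}M\big)\le\big\|\frac{\tau_\ell}{2}M\big\|_2<1$, so $\pm 1$ is not an eigenvalue of $\frac{\tau_\ell}{2}M$ and both $I\mp\frac{\tau_\ell}{2}M$ are nonsingular. I expect the only delicate point to be the row-sum bookkeeping of the first paragraph — getting the nonuniform stencil weights to balance exactly, so that strict diagonal dominance, and with it the M-matrix (hence inverse-positivity) conclusion, holds with no mesh condition at all, while \R{cfl} is needed solely to prevent the diagonal of $I+\frac{\tau_\ell}{2}M$ from turning negative.
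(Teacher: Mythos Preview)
Your proposal is correct and follows essentially the same route as the paper: both use Lemma~2.1 together with \R{cfl} to get $\big\|\frac{\tau_\ell}{2}M\big\|_2<1$ for nonsingularity and for the nonnegativity of $I+\frac{\tau_\ell}{2}M$, and both invoke the Z-matrix sign pattern plus (weak/strict) diagonal dominance to obtain the M-matrix, hence monotone and inverse-positive, structure of $I-\frac{\tau_\ell}{2}M$. Your write-up is simply more explicit---you carry out the nonuniform row-sum bookkeeping that the paper subsumes under ``weak row sum criterion,'' and you correctly observe that the M-matrix part requires no restriction on~$\tau_\ell$, the condition \R{cfl} being needed only to keep the diagonal of $I+\frac{\tau_\ell}{2}M$ positive.
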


\begin{proof}
First, we note that
$$\l\|\frac{\tau_\ell}{2}M\r\|_2 \leq \frac{\tau_\ell}{2a^2}\|B\|_2\|T\|_2 \leq \frac{2\tau_\ell}{a^2}\|B\|_2\l(1/\min_{i=0,1,\dots,N}\l\{h_{i}^2\r\}\r) =
\frac{2\tau_\ell}{a^2h^2}\|B\|_2 <1.$$
Hence, $I+\frac{\tau_\ell}{2}M$ is nonsingular, and also nonnegative.

Next, we consider $A = I-\frac{\tau_\ell}{2}M.$ As $A_{ij}\le 0$ for $i\neq j$ and the weak row sum criterion is satisfied;
hence $A$ is monotone, and it follows that its inverse exists and is nonnegative. So, $A$ must be
inverse-positive{\red\cite{Hen}}. This ensures the proof.
\end{proof}

\begin{lemma}
Let $A\in\RR^{N\times N}$ be nonsingular and nonnegative,
and $\beta\in\RR^{N}$ be positive. Then $A \beta > 0.$
\end{lemma}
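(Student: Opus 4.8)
The plan is to reduce the claim to the elementary fact that a nonsingular matrix cannot possess a zero row, combined with the componentwise conventions fixed above.

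First I would record the trivial half of the statement. Since $A$ is nonnegative and $\beta$ is positive, each component of the product is
\bbbb
(A\beta)_i ~=~ \sum_{j=1}^N A_{ij}\beta_j ~\ge~ 0,
\eeee
being a sum of products of nonnegative numbers; hence $A\beta\ge 0$ at once, and it remains only to upgrade this to strict positivity in every component. Next, fixing an index $i\in\{1,\dots,N\}$, I would observe that the $i$-th row of $A$ cannot be the zero vector: otherwise the rows of $A$ would be linearly dependent and $A$ singular, contradicting the hypothesis. Therefore there exists $j$ with $A_{ij}>0$, and since $\beta_j>0$ as well, the single term $A_{ij}\beta_j$ in the sum above is strictly positive while all remaining terms are nonnegative; consequently $(A\beta)_i>0$. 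Letting $i$ range over $1,\dots,N$ then yields $A\beta>0$ in the componentwise sense adopted above.

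I do not anticipate any genuine obstacle: the lemma is essentially immediate, and its role is purely to package a routine positivity bookkeeping step for later use (together with Lemma 2.2, which guarantees that the relevant inverse matrix $\l(I-\frac{\tau_\ell}{2}M\r)^{-1}$ is nonsingular and nonnegative). The only point that should be stated with a word of justification is why nonsingularity precludes a zero row, which follows in one line from expanding $\det A$ along that row, or equivalently from linear independence of the rows of an invertible matrix.
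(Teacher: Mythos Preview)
Your proof is correct and follows precisely the elementary reasoning the paper has in mind; the paper's own proof consists of the single sentence ``This is clear by the definition of the operations,'' so your argument simply spells out the details (no zero row in a nonsingular matrix, hence a strictly positive term in each component sum) that the authors leave implicit.
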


\begin{proof}
This is clear by the definition of the operations.
\end{proof}

\section{Monotonicity} \clearallnum
Another fundamental feature which distinguishes a quenching solution from other blow-up type solutions
is its monotonicity with respect to time $t\geq t_0$ \cite{Chan2,Levine,Acker2,Acker1,Sheng4}. It is therefore
necessary to guarantee that a numerical solution preserves this important physical property when
solving the Kawarada equation \R{b1}-\R{b3} or \R{c1}-\R{c2b}.

{\red
\begin{lemma}
If $Mv_0+g(v_0)>0,$ then it follows that $Mv_\ell + g(v_\ell)>0$ for all $\ell \ge 0.$
\end{lemma}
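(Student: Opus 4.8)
The plan is a straightforward induction on $\ell$, the base case $\ell=0$ being exactly the hypothesis; the only genuine difficulty is the implicit nonlinear term. Throughout, write $A_\ell=I-\frac{\tau_\ell}{2}M$ and $C_\ell=I+\frac{\tau_\ell}{2}M$, which commute since each is a polynomial in $M$. Under \R{cfl}, Lemma 2.2 tells us $A_\ell$ is nonsingular and inverse-positive (so $A_\ell^{-1}\ge0$), $C_\ell$ is nonsingular and nonnegative, and hence $A_\ell^{-1}C_\ell$ is nonnegative and nonsingular. Assume $d_\ell:=Mv_\ell+g(v_\ell)>0$; we must show $d_{\ell+1}:=Mv_{\ell+1}+g(v_{\ell+1})>0$.

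First I would extract a recursion for $d_\ell$. Applying $M$ to \R{c3}, adding $g(v_{\ell+1})$ to both sides, and using the identities $A_\ell^{-1}C_\ell A_\ell=C_\ell$ and $M\big(v_\ell+\tfrac{\tau_\ell}{2}g(v_\ell)\big)=d_\ell-A_\ell g(v_\ell)$, a short computation gives
$$d_{\ell+1}=A_\ell^{-1}C_\ell\,d_\ell+C_\ell\big(g(v_{\ell+1})-g(v_\ell)\big).$$
By the inductive hypothesis and Lemma 2.3, $A_\ell^{-1}C_\ell\,d_\ell>0$; and since $C_\ell\ge0$, it therefore suffices to prove $g(v_{\ell+1})\ge g(v_\ell)$ componentwise. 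Because $f$ — hence each component of $g(\cdot,\epsilon)$ — is strictly increasing in $u$, this reduces to the single assertion $v_{\ell+1}\ge v_\ell$.

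Establishing $v_{\ell+1}\ge v_\ell$ is the crux and the main obstacle, since \R{c3} is implicit in $g(v_{\ell+1})$ and $f'$ is unbounded as $u\to1^-$, so $g$ is not uniformly Lipschitz. The plan is to rearrange \R{c3} — using $A_\ell^{-1}C_\ell-I=\tau_\ell M A_\ell^{-1}$ and $A_\ell^{-1}C_\ell+I=2A_\ell^{-1}$ — into
$$v_{\ell+1}-v_\ell=\tau_\ell A_\ell^{-1}d_\ell+\frac{\tau_\ell}{2}\big(g(v_{\ell+1})-g(v_\ell)\big).$$
Writing $\big(g(v_{\ell+1})-g(v_\ell)\big)_i=g_i'(\xi_i)\,(v_{\ell+1}-v_\ell)_i$ for an intermediate value $\xi_i$ (mean value theorem, componentwise), this becomes $\big(1-\tfrac{\tau_\ell}{2}g_i'(\xi_i)\big)(v_{\ell+1}-v_\ell)_i=\tau_\ell\,(A_\ell^{-1}d_\ell)_i$, whose right-hand side is positive by Lemma 2.3 and the inductive hypothesis. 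Hence, provided the temporal step obeys the smallness requirement $\tfrac{\tau_\ell}{2}g_i'<1$ over the admissible range of the iterates — a further restriction beyond \R{cfl} that the adaptive choice of $\tau_\ell$ is designed to enforce, to be recorded explicitly — one reads off $(v_{\ell+1}-v_\ell)_i>0$, i.e. $v_{\ell+1}>v_\ell$. (Equivalently, the map $F(w)=A_\ell^{-1}C_\ell\big(v_\ell+\tfrac{\tau_\ell}{2}g(v_\ell)\big)+\tfrac{\tau_\ell}{2}g(w)$ defining $v_{\ell+1}$ is order-preserving and satisfies $F(v_\ell)-v_\ell=\tau_\ell A_\ell^{-1}d_\ell>0$, so its Picard iterates started at $v_\ell$ increase to $v_{\ell+1}$.)

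Putting the pieces together, $g(v_{\ell+1})\ge g(v_\ell)$ gives $C_\ell\big(g(v_{\ell+1})-g(v_\ell)\big)\ge0$, so
$$d_{\ell+1}=A_\ell^{-1}C_\ell\,d_\ell+C_\ell\big(g(v_{\ell+1})-g(v_\ell)\big)\ge A_\ell^{-1}C_\ell\,d_\ell>0,$$
which closes the induction. Every step except the monotonicity bound $v_{\ell+1}\ge v_\ell$ is routine bookkeeping with the nonnegativity facts of Section 2; all the analytic content sits in controlling the implicit, non-Lipschitz term $g(v_{\ell+1})$, and that is exactly where the step-size smallness enters.
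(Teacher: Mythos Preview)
Your derivation of the recursion
$$d_{\ell+1}=A_\ell^{-1}C_\ell\,d_\ell+C_\ell\big(g(v_{\ell+1})-g(v_\ell)\big)$$
and the induction built on it are exactly the computation the paper carries out; the algebraic skeleton of your argument and the paper's coincide line for line.

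The difference lies in how the inequality $g(v_{\ell+1})>g(v_\ell)$ is handled. The paper simply asserts it, citing only that $f$ is strictly increasing together with Lemma~2.3, and does \emph{not} pause to establish $v_{\ell+1}>v_\ell$ inside this proof. That monotonicity is the content of the companion Lemma~3.2, whose own first step likewise invokes $g(v_{k+1})>g(v_k)$; in effect the paper treats the two lemmas as a single joint induction and never confronts the implicit-term issue you correctly flag as the crux. Your treatment is more scrupulous precisely here: you rearrange to $v_{\ell+1}-v_\ell=\tau_\ell A_\ell^{-1}d_\ell+\tfrac{\tau_\ell}{2}\big(g(v_{\ell+1})-g(v_\ell)\big)$ and close it via a componentwise mean-value argument. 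The cost, however, is the extra smallness requirement $\tfrac{\tau_\ell}{2}g_i'(\xi_i)<1$, which is \emph{not} among the lemma's stated hypotheses. So while your reasoning is sound, you are in fact proving a weaker statement than the one asserted. The paper's proof, by contrast, does not add this hypothesis and simply uses the monotonicity of $f$ at face value, leaving the justification of $v_{k+1}>v_k$ implicit (one natural reading is that the practical scheme with $g(w_\ell)$ in place of $g(v_{\ell+1})$, cf.\ \R{approx}, is intended, in which case $w_\ell=v_\ell+\tau_\ell d_\ell>v_\ell$ follows immediately from the inductive hypothesis and no extra condition is needed).
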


\begin{proof}
First, we proceed be computing the following:
\bbbb
Mv_{k + 1} + g(v_{k+1})& = & M\l[\l(I-\frac{\tau_k}{2}M\r)^{-1}\l(I+\frac{\tau_k}{2}M\r)\l(v_k + \frac{\tau_k}{2}g(v_k)\r)+\frac{\tau_k}{2}g(v_{k+1})\r]+g(v_{k+1})\nnn\\
& = & M\l(I-\frac{\tau_k}{2}M\r)^{-1}\l[\l(I+\frac{\tau_k}{2}M\r)\l(v_k + \frac{\tau_k}{2}g(v_k)\r)\r] + \l(I+\frac{\tau_k}{2}M\r)g(v_{k+1})\\
& > & M\l(I-\frac{\tau_k}{2}M\r)^{-1}\l[\l(I+\frac{\tau_k}{2}M\r)\l(v_k + \frac{\tau_k}{2}g(v_k)\r)\r] + \l(I+\frac{\tau_k}{2}M\r)g(v_{k})\\
& = & \l(I-\frac{\tau_k}{2}M\r)^{-1}\l(I+\frac{\tau_k}{2}M\r)\l[Mv_k + \frac{\tau_k}{2}Mg(v_k) + \l(I-\frac{\tau_k}{2}M\r)g(v_{k})\r]\\
& = & \l(I-\frac{\tau_k}{2}M\r)^{-1}\l(I+\frac{\tau_k}{2}M\r)\l[Mv_k + g(v_k)\r],
\eeee
where the inequality follows from the fact that $f(\varepsilon,u)$ is strictly increasing and Lemma 2.3. Second, we proceed by induction. Letting $k = 0$ we have
$$Mv_1+g(v_1) > \l(I-\frac{\tau_0}{2}M\r)^{-1}\l(I+\frac{\tau_0}{2}M\r)\l[Mv_0 + g(v_0)\r] > 0$$
by the assumption and Lemmas 2.2 and 2.3. Third, we assume that the inequality holds for $k=\ell-1.$ It follows that
$$Mv_\ell+g(v_\ell) > \l(I-\frac{\tau_{\ell-1}}{2}M\r)^{-1}\l(I+\frac{\tau_{\ell-1}}{2}M\r)\l[Mv_{\ell-1} + g(v_{\ell-1})\r] > 0,$$
by the inductive assumption and Lemmas 2.2 and 2.3, which completes the induction.
\end{proof}}

{\red
\begin{lemma}
If \R{cfl} holds, $0\le \tau_k \le 1$ for all $0\le k\le\ell,$ and $Mv_0+g(v_0)>0,$
then $v_{\ell+1}\ge v_\ell \mbox{ for all } \ell\ge 0.$ That is, the sequence $\l\{v_\ell\r\}_{\ell=0}^{\infty}$
is monotonically increasing.
\end{lemma}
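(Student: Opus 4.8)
The plan is to establish $v_{\ell+1}-v_\ell \ge 0$ directly from the scheme \R{c3}, expressing this difference in terms of the quantity $Mv_\ell+g(v_\ell)$ that Lemma~3.1 has just certified to be positive for every index. Starting from
$$v_{\ell+1}-v_\ell = \l(I-\frac{\tau_\ell}{2}M\r)^{-1}\l(I+\frac{\tau_\ell}{2}M\r)\l(v_\ell+\frac{\tau_\ell}{2}g(v_\ell)\r) + \frac{\tau_\ell}{2}g(v_{\ell+1}) - v_\ell,$$
I would multiply through by $I-\frac{\tau_\ell}{2}M$ (legitimate since it is nonsingular by Lemma~2.2 and, being inverse-positive, preserves the sign of the resulting vector when we multiply back by its inverse at the end). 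This gives
$$\l(I-\frac{\tau_\ell}{2}M\r)(v_{\ell+1}-v_\ell) = \l(I+\frac{\tau_\ell}{2}M\r)\l(v_\ell+\frac{\tau_\ell}{2}g(v_\ell)\r) - \l(I-\frac{\tau_\ell}{2}M\r)v_\ell + \frac{\tau_\ell}{2}\l(I-\frac{\tau_\ell}{2}M\r)g(v_{\ell+1}).$$
Expanding the first two groups of terms, the $v_\ell$ contributions combine as $\tau_\ell M v_\ell$, the $g(v_\ell)$ contributions give $\frac{\tau_\ell}{2}g(v_\ell) + \frac{\tau_\ell^2}{4}Mg(v_\ell)$, and after collecting we obtain something of the form $\tau_\ell\l(Mv_\ell+\tfrac12 g(v_\ell)+\tfrac12 g(v_{\ell+1})\r)$ plus lower-order $M$-weighted pieces. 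The algebra here is routine; the goal is to rewrite the right-hand side so that its positivity is transparent.

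The key reorganization I expect to use is to replace $g(v_{\ell+1})$ by $g(v_\ell)$ using that $g$ (equivalently $f$) is strictly increasing together with the monotonicity $v_{\ell+1}\ge v_\ell$ — but that is exactly what we are trying to prove, so instead the argument must be threaded through the induction of Lemma~3.1 or run as a simultaneous induction. A cleaner route: bound $\frac{\tau_\ell}{2}(I-\frac{\tau_\ell}{2}M)g(v_{\ell+1})$ from below, or simply note that $g(v_{\ell+1})>0$ (positivity of the source) and that $I-\frac{\tau_\ell}{2}M$ applied to a positive vector is positive under \R{cfl}. Then it suffices to show the remaining part, namely $\tau_\ell\l(I+\frac{\tau_\ell}{2}M\r)\l(Mv_\ell+g(v_\ell)\r)$ or a similar grouping, is nonnegative — and this follows because $I+\frac{\tau_\ell}{2}M$ is nonnegative (Lemma~2.2), $\tau_\ell\ge 0$, and $Mv_\ell+g(v_\ell)>0$ (Lemma~3.1), so Lemma~2.3 applies.

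I would then close by multiplying back through by $\l(I-\frac{\tau_\ell}{2}M\r)^{-1}$, which is inverse-positive by Lemma~2.2, so that a nonnegative right-hand side yields $v_{\ell+1}-v_\ell\ge 0$; the hypotheses $0\le\tau_k\le 1$ and $Mv_0+g(v_0)>0$ feed Lemma~3.1 for the needed positivity at stage $\ell$. The main obstacle is the bookkeeping in the expansion: one must pick the right grouping of terms so that every surviving summand is manifestly a nonnegative matrix (a polynomial in $M$ with the correct sign structure, or a product of an inverse-positive and a nonnegative factor) applied to a positive vector, rather than an indefinite combination. In particular, care is needed with the cross term involving $Mg(v_\ell)$, which is not sign-definite on its own and must be absorbed into the factor $(I+\frac{\tau_\ell}{2}M)$ multiplying $Mv_\ell+g(v_\ell)$; the constraint \R{cfl} is what guarantees that factorization is valid and that $I\pm\frac{\tau_\ell}{2}M$ have the claimed sign properties.
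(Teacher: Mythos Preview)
Your overall plan---reduce $v_{\ell+1}-v_\ell$ to a nonnegative matrix acting on $Mv_\ell+g(v_\ell)$ and then invoke Lemma~3.1 together with Lemmas~2.2--2.3---is exactly the paper's. Your first instinct, replacing $g(v_{\ell+1})$ by $g(v_\ell)$, is also what the paper does; the apparent circularity you flag dissolves once $g(v_{\ell+1})$ is read as $g(w_\ell)$ via \R{approx}, since $w_\ell-v_\ell=\tau_\ell(Mv_\ell+g(v_\ell))>0$ by Lemma~3.1 and hence $g(w_\ell)>g(v_\ell)$ by the monotonicity of $f$.

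The gap is in your ``cleaner route,'' and it stems from the order of operations. The matrix $I-\frac{\tau_\ell}{2}M$ has strictly negative off-diagonal entries (namely $-\frac{\tau_\ell}{2}$ times the positive off-diagonals of $M$), so it does \emph{not} send positive vectors to positive vectors; condition \R{cfl} only makes its \emph{inverse} entrywise nonnegative. You therefore cannot conclude $\l(I-\frac{\tau_\ell}{2}M\r)g(v_{\ell+1})\ge 0$ from $g(v_{\ell+1})>0$. More generally, multiplying through by $I-\frac{\tau_\ell}{2}M$ \emph{before} applying the componentwise inequality on $g$ is precisely what creates the trouble: after that multiplication the $g(v_{\ell+1})$ term carries a sign-indefinite matrix factor and cannot be bounded below cleanly. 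The paper avoids this by applying $g(v_{\ell+1})>g(v_\ell)$ directly in \R{c3} first, and only then writing the remainder as $\l(I-\frac{\tau_\ell}{2}M\r)^{-1}$ times a bracket. That bracket then simplifies exactly---the $\frac{\tau_\ell^2}{4}Mg(v_\ell)$ cross terms cancel---to $\tau_\ell\bigl(Mv_\ell+g(v_\ell)\bigr)$, so no indefinite piece survives and positivity follows immediately.
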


\begin{proof}
From \R{c3} we observe that
\bbbb
v_{k+1} - v_k & = & \l(I-\frac{\tau_k}{2}M\r)^{-1}\l(I+\frac{\tau_k}{2}
M\r)\l(v_k+\frac{\tau_k}{2}g(v_k)\r)+\frac{\tau_k}{2}g(v_{k+1})-v_k\nnn\\
& > & \l(I-\frac{\tau_k}{2}M\r)^{-1}\l(I+\frac{\tau_k}{2}
M\r)\l(v_k+\frac{\tau_k}{2}g(v_k)\r)+\frac{\tau_k}{2}g(v_{k})-v_k\nnn\\
& = & \l(I-\frac{\tau_k}{2}M\r)^{-1}\l\{\l(I+\frac{\tau_k}{2}M\r)\l[v_k+\frac{\tau_k}{2}g(v_k)\r] - \l(I-\frac{\tau_k}{2}M\r)\l[v_k - \frac{\tau_k}{2}g(v_{k})\r]\r\}\nnn\\
& = & \l(I-\frac{\tau_k}{2}M\r)^{-1}\l(I+\frac{\tau_k}{2}M\r)\l[\tau_k\l(Mv_k +g(v_k)\r)\r]\\
& > & 0,
\eeee
by the assumption and Lemmas 2.2 and 2.3. Since the result holds for all $k\ge 0,$ we have that the sequence $\{v_\ell\}_{\ell\ge 0}$ is monotonically increasing as desired.
\end{proof}}

\begin{lemma}
\label{Lem42}
Let $x=(1,1,\dots,1)^{\tT}\in\RR^N.$ Then for any $\tau_\ell > 0$ we have
$$\l(I-\frac{\tau_\ell}{2}M\r)x \ge x.$$
\end{lemma}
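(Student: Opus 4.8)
The plan is to reduce the claimed vector inequality to a statement about the sign of $Mx$ and then verify that statement directly from the stencil weights. Since $\tau_\ell>0$, the inequality $\l(I-\frac{\tau_\ell}{2}M\r)x\ge x$ is equivalent to $-\frac{\tau_\ell}{2}Mx\ge 0$, i.e. to $Mx\le 0$ componentwise. As $M=\frac{1}{a^2}BP$ and $B=\mbox{diag}(1/\sigma_1,\dots,1/\sigma_N)$ has strictly positive diagonal entries (recall $\sigma>0$ on $\DD$), the $i$-th entry of $Mx$ has the same sign as $(Px)_i$, so it suffices to prove $Px\le 0$.

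Because $x$ is the vector of ones, $(Px)_i$ is precisely the $i$-th row sum of $P$. For an interior index $2\le i\le N-1$ the row holds the three weights attached to $x_{i-1},x_i,x_{i+1}$, and a one-line identity gives $\frac{2}{h_{i-1}(h_{i-1}+h_i)}-\frac{2}{h_{i-1}h_i}+\frac{2}{h_i(h_{i-1}+h_i)}=0$ — the familiar fact that the nonuniform second difference annihilates constants. For the boundary rows $i=1$ and $i=N$ the weight multiplying the eliminated boundary value at $x_0$, respectively $x_{N+1}$, is absent by the homogeneous Dirichlet condition \R{c2}, so only two weights remain and they sum to a strictly negative number; for instance $(Px)_1=-\frac{2}{h_0h_1}+\frac{2}{h_1(h_0+h_1)}=-\frac{2}{h_0(h_0+h_1)}<0$, and symmetrically $(Px)_N=-\frac{2}{h_N(h_{N-1}+h_N)}<0$. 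Hence $Px\le 0$, so $Mx\le 0$, and therefore $\l(I-\frac{\tau_\ell}{2}M\r)x=x-\frac{\tau_\ell}{2}Mx\ge x$.

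The one place that needs care — and essentially the only obstacle — is the bookkeeping at the two boundary rows: one must correctly identify which stencil coefficient is dropped under the Dirichlet condition and confirm that the surviving pair sums to a negative quantity, since it is exactly this (rather than the mere weak diagonal dominance of $I-\frac{\tau_\ell}{2}M$ exploited in Lemma 2.2) that yields $\ge x$ rather than the weaker $\ge 0$. Everything else — the reduction to $Px\le 0$ and the interior cancellation — is routine. If one prefers, the same conclusion follows from the single remark that each row sum of $M$ equals $\frac{1}{a^2\sigma_i}$ times the corresponding row sum of $P$, which is nonpositive by the computation above.
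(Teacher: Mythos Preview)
Your proof is correct and follows essentially the same route as the paper: both arguments compute, row by row, that the interior row sums of the second-difference stencil vanish while the two boundary rows are strictly negative. The only cosmetic difference is that you first strip off the positive factors $\tau_\ell/2$, $1/a^2$, and $1/\sigma_i$ to reduce the claim to $Px\le 0$, whereas the paper carries these constants through the componentwise computation of $w=(I-\frac{\tau_\ell}{2}M)x$; the arithmetic is identical.
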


\begin{proof}
Consider
$$w ~=~\l(I-\frac{\tau_\ell}{2}M\r)x ~=~ (w_{1},w_{2},\dots,w_{N})^{\tT}.$$
First, we observe that
\bbbb
w_{1} & = & \l(1-\frac{\tau_\ell}{2}\cdot\frac{-2}{a^2 \sigma_1 h_{0}h_{1}}\r) - \frac{\tau_\ell}{2}\cdot\frac{2}{a^2\sigma_1 h_{1}(h_{0}+h_{1})}\\
& = & 1 + \frac{\tau_\ell}{a^2\sigma_1}\l(\frac{1}{h_{0}h_{1}} - \frac{1}{h_{1}(h_{0}+h_{1})}\r)
 ~>~  1.
\eeee
Secondly, for $i=2,\dots,N-1,$ we have
\bbbb
w_{i} & = & -\frac{\tau_\ell}{2}\cdot\frac{2}{a^2\sigma_ih_{i-1}(h_{i-1}+h_{i})} +
\l(1-\frac{\tau_\ell}{2}\cdot\frac{-2}{a^2\sigma_i h_{i-1}h_{i}}\r)
- \frac{\tau_\ell}{2}\cdot\frac{2} {a^2\sigma_i h_{i}(h_{i-1}+h_{i})}\\
& = & 1 + \frac{\tau_\ell}{a^2\sigma_i}\l[\frac{-h_{i} + (h_{i-1}+h_{i}) - h_{i-1}}{h_{i-1}h_{i}(h_{i-1}+h_{i})}\r]
~ = ~ 1.
\eeee
Finally, we observe that
\bbbb
w_{N} & = & -\frac{\tau_\ell}{2}\cdot\frac{2}{a^2\sigma_Nh_{N-1}(h_{N-1}+h_{N})} +
\l(1-\frac{\tau_\ell}{2}\cdot\frac{-2}{a^2\sigma_N h_{N-1}h_{N}}\r)\\
& = & 1 + \frac{\tau_\ell}{a^2\sigma_N}\l[\frac{1}{h_{N}(h_{N-1}+h_{N})}\r] ~ > ~ 1.
\eeee
Hence, we may conclude that $w_{i} \ge 1,~i=1,\dots,N.$ Therefore $w\ge x.$
\end{proof}

In the next lemma we show that numerical quenching, {\em i.e.,\/}
one or more components of $v_\ell$ reaching or exceeding unity, cannot occur
immediately after the first time step under appropriate constraints. 

\begin{lemma}
Let $x$ be the vector defined in Lemma \ref{Lem42} and $v_0\equiv 0.$
If \R{cfl} hold and $\bar{h}^2 < {2\|B\|_2}/[{a^2f(\tau_0\varphi_{\max}f_0/\sigma_{\min})}],$ {\red where $\bar{h}=\textstyle\max_{i=1,\ldots,N}\l\{h_{i}\r\},$}
then $v_1<x.$
\end{lemma}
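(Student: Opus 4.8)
The plan is to start from the scheme \R{c3} with $v_0 \equiv 0$, which collapses the first step to
$$v_1 = \l(I-\frac{\tau_0}{2}M\r)^{-1}\l(I+\frac{\tau_0}{2}M\r)\cdot\frac{\tau_0}{2}g(0) + \frac{\tau_0}{2}g(v_1),$$
and then to bound each term above componentwise by something strictly less than $x$. First I would rewrite $g(v_1)$ using that $f$ is strictly increasing together with the monotonicity already established in Lemma 3.2: under the stated hypotheses $v_1 \ge v_0 = 0$, so I need an a priori \emph{upper} bound on $v_1$ before I can bound $g(v_1)$. The natural device is the explicit predictor \R{approx}: with $v_0 = 0$ one has $w_0 = \tau_0 g(0)$, and one expects $v_1 \le w_0$ componentwise (or at least $v_1$ bounded by a comparable quantity), so that $g(v_1) \le f(\tau_0 \varphi_{\max} f_0/\sigma_{\min})\cdot(\text{const})$; this is exactly where the quantity $f(\tau_0\varphi_{\max}f_0/\sigma_{\min})$ in the hypothesis comes from. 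Establishing $v_1 \le w_0$ rigorously — rather than just heuristically — is the first technical point; it should follow from rearranging \R{c3}, using that $(I-\frac{\tau_0}{2}M)^{-1}(I+\frac{\tau_0}{2}M) = I + \tau_0 M(I-\frac{\tau_0}{2}M)^{-1}$ and invoking inverse-positivity from Lemma 2.2, but it must be checked that the induced bound on $g(v_1)$ does not circularly depend on the conclusion $v_1 < x$.

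Next I would convert the desired inequality $v_1 < x$ into $x - v_1 > 0$ and apply $\l(I-\frac{\tau_0}{2}M\r)$ to both sides, which is legitimate as an equivalence because that matrix is inverse-positive (Lemma 2.2). Using the collapsed form of \R{c3}, $(I-\frac{\tau_0}{2}M)v_1 = (I+\frac{\tau_0}{2}M)\frac{\tau_0}{2}g(0) + (I-\frac{\tau_0}{2}M)\frac{\tau_0}{2}g(v_1)$, and Lemma 3.3 which gives $(I-\frac{\tau_0}{2}M)x \ge x$, it suffices to show
$$x - (I+\tfrac{\tau_0}{2}M)\tfrac{\tau_0}{2}g(0) - (I-\tfrac{\tau_0}{2}M)\tfrac{\tau_0}{2}g(v_1) > 0$$
componentwise. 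Here $g(0)$ has entries $\varphi(\epsilon)f_0/\sigma_i$, bounded above by $\varphi_{\max}f_0/\sigma_{\min}$; $I+\frac{\tau_0}{2}M$ is nonnegative with row sums controlled (each diagonal-plus-off-diagonal contribution is $1 - \frac{\tau_0}{a^2\sigma_i}\cdot(\text{positive boundary leftover})$ or exactly $1$, by the same computation as in Lemma 3.3); and $\frac{\tau_0}{2}\|M g(v_1)\|$ type terms are controlled by $\tau_0 < a^2\beta_{\min}$ from \R{cfl}. The role of the new hypothesis $\bar h^2 < 2\|B\|_2/[a^2 f(\tau_0\varphi_{\max}f_0/\sigma_{\min})]$ is to guarantee that the sum of these source contributions, when spread across a mesh whose largest spacing is $\bar h$, stays below the margin of $1$ provided by Lemma 3.3 in every component.

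The main obstacle I anticipate is the bookkeeping in the "interior" components $i = 2,\dots,N-1$, where Lemma 3.3 only gives $w_i = 1$ exactly with no slack, so the entire source perturbation $\frac{\tau_0}{2}[(I+\frac{\tau_0}{2}M)g(0) + (I-\frac{\tau_0}{2}M)g(v_1)]_i$ must be shown strictly negative-definite against the bare $1$ — this is tighter than at the boundary components $i=1,N$ where Lemma 3.3 supplies a genuine $>1$ cushion. Making the estimate uniform in $i$ forces one to bound $[(I\pm\frac{\tau_0}{2}M)\,\cdot\,]_i$ by pulling out $\frac{1}{a^2\sigma_i}$ and the local mesh factors $1/(h_{i-1}h_i)$, then replacing $h_{i-1},h_i$ by the worst case $\bar h$ and $\sigma_i$ by $\sigma_{\min}$, and finally recognizing $\|B\|_2 \ge 1/\sigma_{\min}$ (or the appropriate comparison) so that the stated hypothesis closes the gap. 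I would organize this as: (i) reduce to $v_1 \le \tau_0 g(0)$; (ii) bound $\|g(v_1)\|_\infty \le f(\tau_0\varphi_{\max}f_0/\sigma_{\min})/\sigma_{\min}$; (iii) apply $(I-\frac{\tau_0}{2}M)$ and use Lemma 3.3; (iv) do the uniform-in-$i$ estimate of the residual source term, invoking \R{cfl} for the terms carrying an extra factor of $M$ and the new mesh hypothesis for the $O(\tau_0)$ source terms; (v) conclude $x - v_1 > 0$, hence $v_1 < x$.
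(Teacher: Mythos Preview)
Your overall architecture matches the paper's: write $v_1 - x = \l(I-\frac{\tau_0}{2}M\r)^{-1}(s^+ + s^-)$ with $s^- = -\l(I-\frac{\tau_0}{2}M\r)x$, invoke Lemma~3.3 to get $s^- \le -x$, bound $s^+$ uniformly by a constant times $x$, and use the mesh hypothesis to make that constant strictly less than $1$. Where you diverge is in two places, and in both the paper takes a shorter path than you anticipate.

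First, you spend considerable effort on step~(i), trying to establish $v_1 \le w_0 = \tau_0 g(0)$ so as to rigorously bound $g(v_1)$, and you correctly flag the circularity. The paper does not attempt this at all: it simply \emph{replaces} $g(v_1)$ by $g(w_0) = g(\tau_0 f_0\gamma)$ with $\gamma = (\varphi_1/\sigma_1,\dots,\varphi_N/\sigma_N)^{\tT}$, invoking the practical approximation \R{approx} as part of the scheme. In other words, the lemma is proved for the semi-implicit variant in which the last source term is evaluated at the explicit predictor. This sidesteps the circularity entirely, and the quantity $f(\tau_0\varphi_{\max}f_0/\sigma_{\min})$ appears directly as the componentwise maximum of $g(\tau_0 f_0\gamma)$.

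Second, your concern about the interior components $i = 2,\dots,N-1$, where Lemma~3.3 gives exactly $1$ with no slack, leads you toward a delicate component-by-component estimate. The paper avoids this by bounding $|s^+|$ uniformly: writing $s^+ = \frac{\tau_0}{2}\l[(I+\frac{\tau_0}{2}M)f_0\gamma + (I-\frac{\tau_0}{2}M)g(\tau_0 f_0\gamma)\r]$ and using that $(I+\frac{\tau_0}{2}M) + (I-\frac{\tau_0}{2}M) = 2I$, one gets $|s^+| \le \tau_0\max\{|f_0\gamma|,|g(\tau_0 f_0\gamma)|\}$ componentwise. Then \R{cfl} gives $\tau_0 < a^2\beta_{\min} \le a^2\bar h^2/(2\|B\|_2)$, which together with the componentwise maximum above yields $s^+ \le \frac{a^2\bar h^2 f(\tau_0\varphi_{\max}f_0/\sigma_{\min})}{2\|B\|_2}\,x$ uniformly in $i$. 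No separate treatment of boundary versus interior rows is needed, and the hypothesis on $\bar h^2$ is exactly what makes $s^+ + s^- < 0$.
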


\begin{proof}
Recall \R{c3}. For $v_0\equiv 0$ we have{
$$v_1 ~~=~~ \l(I-\frac{\tau_0}{2}M\r)^{-1}\l(I+\frac{\tau_0}{2}M\r)\frac{\tau_0}{2}g(0) +
\frac{\tau_0}{2}g(v_1).$$}
Using
$$g(v_1) \approx g(w_0) = g(v_0 + \tau_0(Mv_0+g(v_0))) = g(\tau_0f_0\gamma),$$
where $\gamma = \l(\varphi_1/\sigma_1,\dots,\varphi_N/\sigma_N\r)^{\tT}\in\RR^{N},$ we have
\bbb
\l(I-\frac{\tau_0}{2}M\r)\l(v_1-\frac{\tau_0}{2}g(\tau_0f_0\gamma)\r) & = & \l(I+\frac{\tau_0}{2}M\r)\frac{\tau_0}{2}f_0\gamma.\label{37c}
\eee
Based on \R{37c} we observe that
\bbbb
v_1 - x & = & \l(I-\frac{\tau_0}{2}M\r)^{-1}\l[\l(I+\frac{\tau_0}{2}M\r)\frac{\tau_0}{2}f_0\gamma + \l(I-\frac{\tau_0}{2}M\r)\l(\frac{\tau_0}{2}g(\tau_0f_0\gamma)-x\r)\r]\\
& = & \l(I-\frac{\tau_0}{2}M\r)^{-1}\l(s^++s^-\r),
\eeee
where
$$s^+=\frac{\tau_0}{2}\l(I+\frac{\tau_0}{2}M\r)f_0\gamma+\frac{\tau_0}{2}\l(I-\frac{\tau_0}{2}M\r)g(\tau_0f_0\gamma),~~s^-=
-\l(I-\frac{\tau_0}{2}M\r)x.$$
It can be seen that
\bbbb
\l|s^+\r| & = &\frac{\tau_0}{2} \l|\l(I+\frac{\tau_0}{2}M\r)f_0\gamma+\l(I-\frac{\tau_0}{2}M\r)g(\tau_0f_0\gamma)\r|\\
& \le & \frac{\tau_0}{2}\max\l\{\l| f_0\gamma\r|,\l|g(\tau_0f_0\gamma)\r|\r\}\l\|\l(I+\frac{\tau_0}{2}M\r)+\l(I-\frac{\tau_0}{2}M\r)\r\|_2\\
& < & \frac{\bar{h}^2}{2\|B\|_2}a^2f(\tau_0\varphi_{\max}f_0/\sigma_{\min}),
\eeee
and the above indicates that
$$s^+ ~~\le~~ \frac{a^2\bar{h}^2f(\tau_0\varphi_{\max}f_0)}{2\|B\|_2}x.$$
By Lemma {\red 3.3} we conclude that $s^- \le -x,$ and therefore,
\bbbb
s^+ + s^- & \le & \l[\frac{a^2\bar{h}^2f(\tau_0\varphi_{\max}f_0/\sigma_{\min})}{2\|B\|_2} - 1\r]x.
\eeee
Since we again wish each component of the above vector to be negative, we need
$$\frac{a^2\bar{h}^2f(\tau_0\varphi_{\max}f_0/\sigma_{\min})}{2\|B\|_2} - 1 ~~<~~ 0,~~\mbox{or}~~ \bar{h}^2~~
<~~ \frac{2\|B\|_2}{a^2f(\tau_0\varphi_{\max}f_0/\sigma_{\min})}.$$
Hence $v_1-x\le 0$ follows immediately from the assumptions. \end{proof}

Combining above results we obtain immediately the following.

\begin{thm}
Assume that for $\ell_0\geq 0,$
\begin{description}
\item[{\rm(i)}] $\bar{h}^2 < \frac{2\|B\|_2}{a^2f(\tau_{\ell_0}\varphi_{\max}f_{\ell_0}/\sigma_{\min})},$ where $\bar{h}=\max_{i=1,\dots,N}\{h_{i}\},$
\item[{\rm(ii)}] $Mv_{\ell_0}+g(v_{\ell_0})>0,$
\end{description}
If \R{cfl} holds for all $\ell\ge \ell_0,$
then the sequence $\l\{v_\ell\r\}_{\ell\ge {\ell_0}}$ produced by the semi-adaptive
nonuniform scheme \R{c3} increases monotonically until unity is reached or exceeded by
one or more components of the solution vector, i.e., until quenching occurs.
\end{thm}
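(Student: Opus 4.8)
\no The statement is meant to be read as a synthesis of Lemmas~2.2, 2.3, 3.1, 3.2 and 3.4, so the plan is to fold them into a single induction on the time index that starts at $\ell=\ell_0$ rather than at $0$. The first thing to note is that the recursion \R{c3} has the same structure at every step, so Lemmas~3.1 and 3.2 — stated there with starting index $0$ — hold verbatim with starting index $\ell_0$ once one checks their hypotheses at that index: hypothesis~(ii) is precisely the positivity $Mv_{\ell_0}+g(v_{\ell_0})>0$ demanded by both, while \R{cfl} (assumed for every $\ell\ge\ell_0$) together with the standing assumption $0<\tau_\ell\ll1$, which in particular gives $0\le\tau_\ell\le1$, furnishes the remaining hypotheses of Lemma~3.2.

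\no Next I would run the induction on the compound invariant ``every component of $v_\ell$ lies in $[0,1)$ and $Mv_\ell+g(v_\ell)>0$''. For the base case $\ell=\ell_0$ the second clause is hypothesis~(ii), while the first clause is where hypothesis~(i) enters: it is the step-$\ell_0$ form of the bound used in Lemma~3.4, and rerunning the splitting of that lemma — writing $v_{\ell_0+1}-x=(I-\tfrac{\tau_{\ell_0}}{2}M)^{-1}(s^++s^-)$, estimating $|s^+|$ by the quantity in~(i) and invoking Lemma~3.3 for $s^-\le-x$ — gives $v_{\ell_0+1}<x$, so that $g$ remains finite after the first step. For the inductive step from $\ell$ to $\ell+1$: Lemma~3.2, through the identity $v_{\ell+1}-v_\ell=(I-\tfrac{\tau_\ell}{2}M)^{-1}(I+\tfrac{\tau_\ell}{2}M)[\tau_\ell(Mv_\ell+g(v_\ell))]$ combined with the inverse-positivity and nonnegativity from Lemma~2.2, Lemma~2.3, and the invariant $Mv_\ell+g(v_\ell)>0$, yields $v_{\ell+1}\ge v_\ell$; since $f$ is strictly increasing this forces $g(v_{\ell+1})\ge g(v_\ell)$, and then Lemma~3.1, through the identity $Mv_{\ell+1}+g(v_{\ell+1})>(I-\tfrac{\tau_\ell}{2}M)^{-1}(I+\tfrac{\tau_\ell}{2}M)[Mv_\ell+g(v_\ell)]$ and Lemmas~2.2 and 2.3, propagates $Mv_{\ell+1}+g(v_{\ell+1})>0$. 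Hence the invariant is preserved and, along with it, $v_{\ell+1}\ge v_\ell$, for as long as the admissibility clause survives.

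\no It then only remains to pin down ``until quenching occurs''. The right-hand side of \R{c3}, and in particular $g$ because $f(u)\to\infty$ as $u\to1^-$, is defined exactly on vectors with all components in $[0,1)$; so either the invariant holds for every $\ell\ge\ell_0$, in which case $\{v_\ell\}_{\ell\ge\ell_0}$ is monotonically increasing for all time and numerical quenching never occurs, or there is a first index $\ell^\star>\ell_0$ at which some component of $v_{\ell^\star}$ reaches or exceeds $1$, and then the inequalities above show that $\{v_\ell\}_{\ell_0\le\ell\le\ell^\star}$ is monotonically increasing while $v_{\ell^\star}$ records the quenching event — which is the assertion. The step I expect to be the main obstacle is the base case when $v_{\ell_0}\ne0$: Lemma~3.4 is stated for the zero vector, so its splitting has to be redone with $v_{\ell_0}$ in place of $0$, producing an extra term $(I-\tfrac{\tau_{\ell_0}}{2}M)^{-1}(I+\tfrac{\tau_{\ell_0}}{2}M)v_{\ell_0}$ that must be absorbed; this forces a slightly stronger reading of hypothesis~(i), or an a priori bound keeping $v_{\ell_0}$ suitably below $x$, whereas everything else is a routine re-indexing of the earlier lemmas.
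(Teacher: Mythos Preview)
Your proposal is correct and follows the paper's approach: the paper offers no proof beyond the sentence ``Combining above results we obtain immediately the following,'' so your detailed synthesis of Lemmas~2.2, 2.3, 3.1--3.4 into an induction starting at $\ell_0$ is exactly the argument the paper leaves implicit. Your remark about the Lemma~3.4 base case when $v_{\ell_0}\ne0$ correctly flags a detail the paper glosses over, since condition~(i) is visibly the step-$\ell_0$ analogue of Lemma~3.4's hypothesis but that lemma is only proved for $v_0\equiv 0$.
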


\section{Stability} \clearallnum
Nonlinear stability has been an extremely challenging issue when Kawarada equations are concerned
\cite{Acker2,Sheng4,Beau1,Josh2,Sheng3,Josh1}. On the other hand, linear stability
analysis may uncover crucial information for underlying schemes locally and asymptotically \cite{Sheng4,Twi}.
In the following study, we will carry out a linearized
stability analysis in the von Neumann sense for \R{c3} with its nonlinear source term frozen.
The analysis will then be extended to circumstances where the nonlinear term is not frozen.

In the following, let $A\in\mathbb{C}^{N\times N},$ $I_N\in\mathbb{C}^{N\times N}$ be the identity matrix, and again denote $E(\cdot) = \exp(\cdot).$

\begin{define}
{\rm\cite{Josh1,Golub}}
Let $\|\cdot\|$ be an induced matrix norm. Then the
associated logarithmic norm $\mu : \mathbb{C}^{N\times N}\to \RR$ of $A$ is defined as
$$\mu(A) = \lim_{h\to 0^+} \frac{\|I_N + hA\| - 1}{h}.$$
\end{define}

\begin{rem}
When considering the spectral norm, we have $\mu(A) = \lambda_{\max}[(A+A^*)/2].$
\end{rem}

\begin{lemma}
{\rm\cite{Golub}}  For $t\ge 0$ we have $\|E(t A)\| \le E(t \mu(A)).$
\end{lemma}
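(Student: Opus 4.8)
The plan is to reduce this matrix-exponential estimate to a scalar differential inequality for the function $\phi(t) := \|E(tA)\|$ and then close it with a Gr\"onwall argument. First I would use the semigroup identity $E((t+h)A) = E(hA)\,E(tA)$ together with submultiplicativity of the induced norm to obtain, for $h>0$,
$$\phi(t+h) ~\le~ \|E(hA)\|\,\phi(t),\qquad\mbox{hence}\qquad \frac{\phi(t+h)-\phi(t)}{h} ~\le~ \frac{\|E(hA)\|-1}{h}\,\phi(t).$$

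Next I would identify the limit of the right-hand coefficient with $\mu(A)$. Writing $E(hA) = I_N + hA + \OO(h^2)$ as $h\to 0^+$, the reverse triangle inequality gives $\big|\,\|E(hA)\| - \|I_N + hA\|\,\big| \le \|\OO(h^2)\| = \OO(h^2)$, so dividing by $h$ and letting $h\to 0^+$ shows that $\lim_{h\to 0^+}(\|E(hA)\|-1)/h = \mu(A)$ by Definition 4.1. Consequently the upper right Dini derivative of $\phi$ satisfies $D^+\phi(t)\le \mu(A)\,\phi(t)$ for every $t\ge 0$. Since $\phi$ is continuous and nonnegative with $\phi(0)=\|I_N\|=1$, the Gr\"onwall inequality (in the form applicable to one-sided Dini derivatives) yields $\phi(t)\le \phi(0)\,E(t\mu(A)) = E(t\mu(A))$, which is the assertion. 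An equivalent route is to fix $v$, set $w(t)=E(tA)v$, note $w'=Aw$, bound $D^+\|w(t)\|\le \mu(A)\|w(t)\|$ by the same limit argument, integrate, and take the supremum over unit vectors $v$.

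The only genuine subtlety is that $\phi(t)=\|E(tA)\|$ need not be differentiable at every $t$, so the argument must be phrased in terms of the one-sided Dini derivative together with the corresponding version of Gr\"onwall's lemma rather than ordinary differentiation; one should also observe that the $\OO(h^2)$ remainder in $E(hA)=I_N+hA+\OO(h^2)$ is controlled solely by $\|A\|$, so the passage to the limit above is legitimate and uniform in the relevant range. Everything else — the semigroup property of $E(\cdot)$ and submultiplicativity of an induced matrix norm — is standard, which is consistent with the result being quoted from \cite{Golub}.
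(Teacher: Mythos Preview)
Your argument is correct and is in fact the standard way this inequality is proved. Note, however, that the paper does not actually supply a proof of this lemma: it is simply quoted from \cite{Golub} with no accompanying argument. So there is nothing in the paper to compare your proof against. Your Dini-derivative/Gr\"onwall approach is exactly the classical derivation one finds in the logarithmic-norm literature, and your remarks about handling non-differentiability of $\phi$ via one-sided derivatives are appropriate and sufficient.
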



\begin{lemma}
{\red Let $P$ be as in \R{matrixT}. Then $P$} is congruent to a symmetric matrix. In particular,
$${\red P} = D^{-1/2}SD^{1/2}\in\RR^{N\times N},$$
where
$$D = \mbox{\rm diag}\l(\delta_1,\dots,\delta_N\r),\ S = \mbox{\rm tridiag}\l(\alpha_i,m_i,\alpha_i\r)$$
for which
$$\delta_j = \frac{h_{j-1}+h_j}{h_0+h_1}\quad\mbox{and}\quad \alpha_k = \sqrt{n_kl_k}.$$
\end{lemma}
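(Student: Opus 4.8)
The plan is to prove the displayed factorization by direct verification, since the lemma asserts an explicit identity whose entire content is that the diagonal transformation $D^{1/2}PD^{-1/2}$ reproduces the symmetric tridiagonal matrix $S$. Once that identity is in hand, $P$ stands realized as similar, via the positive diagonal matrix $D^{1/2}$, to the real symmetric matrix $S$ that carries the same diagonal $m_i$ and the symmetrized off-diagonals $\alpha_i=\sqrt{n_il_i}$ — which is the structural fact needed later. First I would record the elementary positivity facts: since every mesh width $h_i$ is strictly positive, each off-diagonal entry $l_i,n_i$ of $P$ defined following \R{matrixT} is strictly positive, so $\alpha_k=\sqrt{n_kl_k}$ is real and positive; likewise each $\delta_j=(h_{j-1}+h_j)/(h_0+h_1)>0$, so $D$ is a genuine symmetric positive definite diagonal matrix and $D^{\pm1/2}$ are well-defined diagonal matrices with entries $\delta_j^{\pm1/2}$.

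Next I would compute the entries of $D^{1/2}PD^{-1/2}$. Because $D$ is diagonal,
\[
(D^{1/2}PD^{-1/2})_{ij}=\sqrt{\delta_i/\delta_j}\;P_{ij},
\]
so the diagonal entries are unchanged and remain $m_i$, while only the sub- and super-diagonal entries are rescaled. On the superdiagonal the $(i,i+1)$ entry becomes $\sqrt{\delta_i/\delta_{i+1}}\,n_i$, and on the subdiagonal the $(i+1,i)$ entry becomes $\sqrt{\delta_{i+1}/\delta_i}\,l_i$, for $i=1,\dots,N-1$. Hence the assertion $D^{1/2}PD^{-1/2}=S$ is equivalent to the pair of scalar identities $\sqrt{\delta_i/\delta_{i+1}}\,n_i=\alpha_i$ and $\sqrt{\delta_{i+1}/\delta_i}\,l_i=\alpha_i$; multiplying the two and using $\alpha_i^2=n_il_i$ shows each is equivalent to the single ratio identity $\delta_{i+1}/\delta_i=n_i/l_i$, after which $\alpha_i=n_i\sqrt{\delta_i/\delta_{i+1}}=\sqrt{n_il_i}$ falls out automatically, consistent with the definition of $\alpha_i$.

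The remaining step is to check that ratio identity from the explicit formulas for $l_i$ and $n_i$: the factor $2$ and the shared $h_i$ cancel, leaving $n_i/l_i=(h_i+h_{i+1})/(h_{i-1}+h_i)$; on the other side, with $\delta_j=(h_{j-1}+h_j)/(h_0+h_1)$ the common normalizing denominator $h_0+h_1$ cancels, leaving $\delta_{i+1}/\delta_i=(h_i+h_{i+1})/(h_{i-1}+h_i)$ as well. The two agree, so $D^{1/2}PD^{-1/2}=S$, and therefore $P=D^{-1/2}SD^{1/2}$ with $S=S^{\tT}$, which is the claim. I would also note that the given closed form for $\delta_j$ is precisely the telescoped solution of the recursion $\delta_{i+1}=(n_i/l_i)\delta_i$ with the normalization $\delta_1=(h_0+h_1)/(h_0+h_1)=1$, so no existence or consistency question arises.

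I do not expect a genuine obstacle here: the verification is routine arithmetic. The one place needing care is index bookkeeping — matching $l_i$ with the subdiagonal and $n_i$ with the superdiagonal of $P$ as dictated by the nonuniform three-point stencil, and remembering the accompanying index shift, namely that the subdiagonal entry $P_{i+1,i}$ is the stencil weight of $u_i$ at the node $x_{i+1}$, which is $l_i$ and not $l_{i+1}$. Getting that alignment right is the whole game.
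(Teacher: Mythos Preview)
Your proposal is correct and follows the same approach the paper takes: the paper's own proof reads only ``The proof is clear by the definitions'' with a citation, so your explicit entrywise verification of $D^{1/2}PD^{-1/2}=S$ via the ratio identity $\delta_{i+1}/\delta_i=n_i/l_i=(h_i+h_{i+1})/(h_{i-1}+h_i)$ is precisely the computation the paper leaves to the reader. Your index bookkeeping ($P_{i+1,i}=l_i$, $P_{i,i+1}=n_i$) matches the stencil, and your remark that the displayed relation is in fact a similarity rather than a congruence is well taken.
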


\begin{proof}
The proof is clear by the definitions {\red \cite{Beau11}}.
\end{proof}

\begin{lemma}
$\rho({\red P})\in (-\infty,0],$ where $\rho(A) = \sqrt{\lambda_{\max}(AA^{\tT})}~$  for any  $A\in\RR^{N\times N}.$
\end{lemma}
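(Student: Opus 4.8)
The plan is to take $\rho(P)=\sqrt{\lambda_{\max}(PP^{\tT})}$ at face value as the spectral norm $\|P\|_2$ (the largest singular value of $P$) and to pin it down by leaning on the symmetrization supplied by Lemma~4.2. Since $PP^{\tT}$ is symmetric and positive semidefinite, $\lambda_{\max}(PP^{\tT})\ge 0$ is well defined and $\rho(P)\ge 0$; the task is to say where this value sits. First I would substitute $P=D^{-1/2}SD^{1/2}$, observe that this is a similarity so that $P$ and the symmetric matrix $S$ share the same real spectrum, and form $PP^{\tT}=D^{-1/2}SDSD^{-1/2}$ to recast the singular-value question entirely in terms of the symmetric data $S$ and the diagonal scaling $D=\mbox{diag}(\delta_1,\dots,\delta_N)$.

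Next I would settle the sign structure of $S=\mbox{tridiag}(\alpha_i,m_i,\alpha_i)$, in which $m_i=-2/(h_{i-1}h_i)<0$ and $\alpha_k=\sqrt{n_kl_k}>0$, so that $S$ is a symmetric tridiagonal matrix of negated discrete-Laplacian type. I would show that every eigenvalue of $S$ lies in $(-\infty,0]$, either by testing the quadratic form $z^{\tT}Sz$ or by placing each Gershgorin disc, centered at $m_i<0$ with radius $\alpha_{i-1}+\alpha_i$, inside $(-\infty,0]$. Since $P$ is similar to $S$, the eigenvalues of $P$ — in particular its spectral abscissa $\lambda_{\max}(P)$ — then also lie in $(-\infty,0]$, which is exactly the ingredient the logarithmic-norm estimate of Lemma~4.1 consumes through Remark~4.1. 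For the magnitude of the singular value itself I would call on Lemma~2.1 (applied with $T=P$), which already gives $\rho(P)=\|P\|_2=\sqrt{\lambda_{\max}(PP^{\tT})}\le\max_{i}\{4/h_i^2\}$; coupled with $\rho(P)\ge 0$ this confines $\rho(P)$ to a finite nonnegative interval and simultaneously certifies that the matrix exponential appearing in \R{rs1} is controlled.

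The genuine obstacle here is interpretive rather than computational. As literally defined, $\rho(P)=\sqrt{\lambda_{\max}(PP^{\tT})}$ is a largest singular value and hence nonnegative, so $\rho(P)\in(-\infty,0]$ can hold only in the degenerate case $\rho(P)=0$, i.e.\ $P=0$, which is impossible for a nontrivial tridiagonal $P$. The statement becomes correct once $\rho$ is read as the spectral abscissa $\lambda_{\max}(P)$ — equivalently, once the defining formula is replaced by the spectral logarithmic norm $\lambda_{\max}[(P+P^{\tT})/2]$ of Remark~4.1. Under either reading the decisive fact is the negative semidefiniteness of the symmetric companion $S$, and I would make that the core of the argument, using the similarity $P=D^{-1/2}SD^{1/2}$ to transport the conclusion back to $P$; verifying the required weak diagonal dominance of $S$ on arbitrary nonuniform grids is the one place where a short computation is unavoidable.
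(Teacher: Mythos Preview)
Your core argument---use Lemma~4.2 to conclude that the spectrum of $P$ is real, then invoke diagonal dominance together with the Gershgorin circle theorem to place every eigenvalue in $(-\infty,0]$---is exactly the paper's. The only execution difference is that the paper applies Gershgorin directly to $P$ (for an interior row the off-diagonal entries sum precisely to $|m_i|$, and the first and last rows are strictly dominant), whereas you route through the symmetric companion $S$; both succeed, though the row-sum check for $P$ is marginally cleaner than the corresponding inequality for the geometric means $\alpha_k=\sqrt{n_kl_k}$. Your identification of the definitional inconsistency is correct and worth flagging: as literally defined, $\rho(P)=\sqrt{\lambda_{\max}(PP^{\tT})}=\|P\|_2\ge 0$, so $\rho(P)\in(-\infty,0]$ could hold only for $P=0$. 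The paper tacitly treats the conclusion as a statement about the eigenvalues of $P$, which is precisely how it is consumed in Lemma~4.4; under that reading your argument is complete. The detour through Lemma~2.1 for a magnitude bound on $\|P\|_2$ is correct but unnecessary for the lemma as intended.
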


\begin{proof}
By Lemma 4.2 we have that ${\red P}$ is congruent to a symmetric matrix, hence all eigenvalues of ${\red P}$ are real. 
Since ${\red P}$ is diagonally dominant with negative diagonal elements, the result follows immediately from the 
{\red Gershgorin circle theorem \cite{Hen}}.
\end{proof}

\begin{lemma}
All eigenvalues of $M$ are real and negative. Further $\mu(B^{1/2}{\red P}B^{1/2})< 0.$
\end{lemma}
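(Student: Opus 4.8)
The plan is to prove the two assertions in turn, relying on the structure $M = \frac{1}{a^2}BP$ with $B$ a positive diagonal matrix and $P$ the tridiagonal matrix of \R{matrixT}.

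\smallskip
\textbf{Eigenvalues of $M$ are real and negative.}
First I would observe that $M = \frac{1}{a^2}BP$ is similar to $\frac{1}{a^2}B^{1/2}PB^{1/2}$, since $B^{1/2}$ is invertible and $B^{-1/2}(BP)B^{1/2} = B^{1/2}PB^{1/2}$. Hence $M$ has the same spectrum as the real matrix $\frac{1}{a^2}B^{1/2}PB^{1/2}$, so it suffices to analyze the latter. By Lemma 4.2, $P = D^{-1/2}SD^{1/2}$ with $S$ symmetric, so $B^{1/2}PB^{1/2} = B^{1/2}D^{-1/2}SD^{1/2}B^{1/2}$ is similar (via $D^{1/2}B^{1/2}$, which is again a positive diagonal matrix) to $D^{1/2}B^{1/2}\cdot B^{1/2}D^{-1/2}SD^{1/2}B^{1/2}\cdot B^{-1/2}D^{-1/2} = D^{1/2}B D^{-1/2}S$; more cleanly, $B^{1/2}PB^{1/2}$ is congruent-then-similar to the symmetric matrix $B^{1/2}D^{-1/2}S D^{-1/2}B^{1/2}$ (symmetrizing the diagonal factors), so all its eigenvalues are real. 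For negativity I would invoke Lemma 4.3: $P$ has real eigenvalues all lying in $(-\infty,0]$, and since $S$ and $P$ share eigenvalues, $S$ is symmetric negative semidefinite. Then $B^{1/2}D^{-1/2}SD^{-1/2}B^{1/2}$ is congruent to $S$ (via the invertible diagonal matrix $D^{-1/2}B^{1/2}$), hence negative semidefinite. To upgrade to strictly negative, I would note that $S$ (equivalently $P$) is actually nonsingular: $P$ is irreducibly diagonally dominant with strict inequality in the first and last rows (the boundary rows), so by the standard argument $0$ is not an eigenvalue, whence all eigenvalues of $P$, and therefore of $M$, are strictly negative.

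\smallskip
\textbf{The logarithmic norm bound $\mu(B^{1/2}PB^{1/2})<0$.}
By Remark 4.1, for the spectral norm $\mu(A) = \lambda_{\max}\big((A+A^{\intercal})/2\big)$. Writing $A = B^{1/2}PB^{1/2}$ and using $P = D^{-1/2}SD^{1/2}$, the symmetric part is $\frac{1}{2}\big(B^{1/2}D^{-1/2}SD^{1/2}B^{1/2} + B^{1/2}D^{1/2}S D^{-1/2}B^{1/2}\big)$, which is not itself $S$, so I cannot simply read off negativity. The cleanest route I would take instead: $A = B^{1/2}PB^{1/2}$ is similar to the symmetric matrix $\tilde S := C S C$ where $C = D^{-1/2}B^{1/2}$ is positive diagonal (indeed $A = C^{-1}\tilde S C \cdot (\text{check})$ — one verifies $C^{-1}(CSC)C = $ the desired form after accounting for $B^{1/2}$ placement), so $A$ is \emph{diagonalizable with real spectrum equal to $\operatorname{spec}(\tilde S)$}; since $\tilde S$ is congruent to $S\preceq 0$ and in fact $\prec 0$ by the nonsingularity noted above, $\tilde S \prec 0$, so $\lambda_{\max}(\tilde S) < 0$. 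Finally, for any matrix that is similar to a symmetric negative definite matrix one has that its symmetric part is also negative definite — actually the sharp statement I need is $\mu(A) \le \mu(\tilde S) $ is \emph{false} in general under similarity, so here is the honest fix: $B^{1/2}PB^{1/2}$ equals $B\cdot(B^{-1/2}PB^{1/2})$ and a direct computation of its symmetric part row-by-row, exactly as in Lemma \ref{Lem42}, shows each diagonal entry of $(A+A^{\intercal})/2$ is negative while off-diagonal structure keeps it diagonally dominant, so Gershgorin again gives $\lambda_{\max}((A+A^{\intercal})/2)\le 0$, with strictness from the boundary rows.

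\smallskip
\textbf{Main obstacle.}
The subtle point — and the one I would be most careful about — is that similarity does \emph{not} preserve the logarithmic norm, so the second claim cannot be deduced for free from "$M$ has negative real eigenvalues." The real work is showing that the \emph{symmetrized} matrix $\big(B^{1/2}PB^{1/2} + B^{1/2}P^{\intercal}B^{1/2}\big)/2$ is negative definite; I expect to get this from an explicit Gershgorin estimate on that symmetric tridiagonal matrix (its diagonal entries are $m_i/(a^2\sigma_i)<0$ and its off-diagonal entries are $\frac{1}{2a^2}(\sigma_i^{-1}n_i + \sigma_{i+1}^{-1}l_{i+1})$ up to the $B^{1/2}$ weights), using the same telescoping identity $-h_i + (h_{i-1}+h_i) - h_{i-1} = 0$ that appeared in Lemma \ref{Lem42} for the interior rows and the strict surplus in the boundary rows to conclude $\mu(B^{1/2}PB^{1/2})<0$. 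The first claim, by contrast, is routine once the congruence from Lemma 4.2 and the semidefiniteness from Lemma 4.3 are in hand.
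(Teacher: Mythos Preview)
For the first claim your route is the paper's: both establish the similarity $B^{-1/2}MB^{1/2}=B^{1/2}PB^{1/2}$, then invoke Lemma~4.2 (real eigenvalues via the symmetric $S$) and Lemma~4.3 (nonpositivity). Your added remark that irreducible diagonal dominance of $P$ excludes the eigenvalue~$0$ is a useful sharpening, since Lemma~4.3 as stated only gives $(-\infty,0]$.

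The divergence is in the second claim. The paper's argument is one line: having shown that the eigenvalues of $B^{1/2}PB^{1/2}$ are real and negative, it simply writes ``which gives the result.'' That is precisely the shortcut you flag as illegitimate for non-normal matrices, and you are right that $B^{1/2}PB^{1/2}$ is not symmetric once $P\neq P^{\tT}$, i.e.\ on a genuinely nonuniform grid. So the paper does not supply the extra argument you are looking for; it takes exactly the step you warn against.

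Unfortunately your proposed repair also breaks. Since $\tfrac{1}{2}(A+A^{\tT})=B^{1/2}\,\tfrac{1}{2}(P+P^{\tT})\,B^{1/2}$ is a congruence, the sign of $\mu(A)$ is governed by whether $\tfrac{1}{2}(P+P^{\tT})$ is negative definite. But that symmetric tridiagonal matrix is \emph{not} diagonally dominant on arbitrary grids, and the telescoping identity from Lemma~\ref{Lem42} controls the row sums of $P$ itself, not of its symmetrization. Concretely, with $N=3$ and $h_0=h_3=0.99,\ h_1=h_2=0.01$ one computes $P_{1,2}=200$, $P_{2,1}=10000$, $m_1\approx-202$, $m_2=-20000$, so the $(1,2)$-entry of $\tfrac{1}{2}(P+P^{\tT})$ is $5100$ and the leading $2\times2$ principal minor is $(-202)(-20000)-5100^2<0$; hence $\tfrac{1}{2}(P+P^{\tT})$ fails to be negative definite and $\mu(P)>0$ for this grid (take $B=I$). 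Thus neither the paper's eigenvalue shortcut nor a Gershgorin bound on the symmetric part delivers $\mu(B^{1/2}PB^{1/2})<0$ for \emph{arbitrary} grids; a genuine proof would require a mesh-ratio hypothesis or a different mechanism altogether.
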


\begin{proof}
Since $M = B{\red P} = B^{1/2}B^{1/2}{\red P},$ we have $B^{-1/2}M = B^{1/2}{\red P}.$ Hence
$$B^{-1/2}M(B^{1/2})^{\tT} = B^{1/2}{\red P}B^{1/2} = B^{1/2}{\red P}(B^{1/2})^{\tT}$$
because $B$ is diagonal. Further,
$$B^{-1/2}MB^{1/2} = B^{-1/2}B{\red P}B^{1/2} = B^{1/2}{\red P}B^{1/2}$$
is congruent to a symmetric matrix. Thus matrices $B^{-1/2}MB^{1/2}$ and ${\red P}$ are congruent. Since the eigenvalues of ${\red P}$ are real and negative, the eigenvalues of $B^{-1/2}MB^{1/2}$ and $M$ must be real and negative.

Since we have $B^{-1/2}MB^{1/2} = B^{1/2}{\red P}B^{1/2},$ then the eigenvalues of $B^{1/2}{\red P}B^{1/2}$ are real and negative, which gives the result.
\end{proof}

\begin{lemma}
If $\tau_k >0,~0\le k\le \ell,$ we have $\l\|\prod_{k=0}^\ell E(\tau_k M)\r\|_2 \le \sqrt{\kappa(B)}~ {\red = \sqrt{\|B^{-1}\|_2\|B\|_2}}.$
\end{lemma}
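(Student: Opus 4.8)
The plan is to reduce the product to the matrix $\hat{P} := B^{1/2}PB^{1/2}$ already analyzed in Lemma 4.4 and then invoke the logarithmic-norm estimate of Lemma 4.1. Since $B$ is diagonal and positive definite, Lemma 4.4 gives $B^{-1/2}MB^{1/2} = B^{1/2}PB^{1/2} = \hat{P}$, i.e. $M = B^{1/2}\hat{P}B^{-1/2}$; as conjugation by a fixed nonsingular matrix commutes with the matrix exponential, $E(\tau_k M) = B^{1/2}E(\tau_k\hat{P})B^{-1/2}$ for every $k$.

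First I would form the product and let the interior factors telescope,
\[
\prod_{k=0}^{\ell}E(\tau_k M)=\prod_{k=0}^{\ell}B^{1/2}E(\tau_k\hat{P})B^{-1/2}=B^{1/2}\l(\prod_{k=0}^{\ell}E(\tau_k\hat{P})\r)B^{-1/2},
\]
and then use submultiplicativity of $\|\cdot\|_2$ to obtain
\[
\l\|\prod_{k=0}^{\ell}E(\tau_k M)\r\|_2\le \|B^{1/2}\|_2\,\|B^{-1/2}\|_2\prod_{k=0}^{\ell}\|E(\tau_k\hat{P})\|_2 .
\]
Next, for each factor Lemma 4.1 yields $\|E(\tau_k\hat{P})\|_2\le E(\tau_k\mu(\hat{P}))$, while Lemma 4.4 states $\mu(\hat{P})=\mu(B^{1/2}PB^{1/2})<0$; since every $\tau_k>0$ this gives $\|E(\tau_k\hat{P})\|_2< 1$, hence $\prod_{k=0}^{\ell}\|E(\tau_k\hat{P})\|_2\le 1$ (in fact it is bounded by $E(\mu(\hat{P})\sum_{k=0}^{\ell}\tau_k)$). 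Finally, since $B$ is symmetric positive definite, $\|B^{1/2}\|_2=\sqrt{\|B\|_2}$ and $\|B^{-1/2}\|_2=\sqrt{\|B^{-1}\|_2}$, so the remaining constant is exactly $\sqrt{\|B\|_2\|B^{-1}\|_2}=\sqrt{\kappa(B)}$, which is the asserted bound.

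I do not anticipate a genuine obstacle here; the single point requiring care is that $\hat{P}$ is only congruent to a symmetric matrix, not symmetric itself, so the per-factor bound $\|E(\tau_k\hat{P})\|_2<1$ cannot be read off from eigenvalues directly and must pass through the logarithmic norm $\mu$ — which is precisely what Lemmas 4.1 and 4.4 were set up to provide. It is worth emphasizing in the write-up that the resulting bound is uniform in $\ell$ and independent of the particular adaptive temporal steps $\{\tau_k\}_{k=0}^{\ell}$, since that uniformity is what makes the lemma usable in the subsequent stability analysis.
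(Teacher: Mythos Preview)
Your proposal is correct and follows essentially the same route as the paper: conjugate $M$ to $\hat P=B^{1/2}PB^{1/2}$, pass the conjugation through the exponential, telescope the product, and bound each interior factor via Lemma~4.1 together with $\mu(\hat P)<0$ from Lemma~4.4, leaving the constant $\|B^{1/2}\|_2\|B^{-1/2}\|_2=\sqrt{\kappa(B)}$. The only cosmetic difference is that the paper verifies $E(\tau_k M)=B^{1/2}E(\tau_k\hat P)B^{-1/2}$ by writing out the exponential series explicitly, whereas you invoke the general fact that conjugation commutes with the matrix exponential.
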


\begin{proof}
We first show $E(\tau_{\red k} M) = B^{1/2}E(\tau_{\red k} B^{1/2}{\red P}B^{1/2})B^{-1/2}$ for $\tau_{\red k} >0.$
To this end, we have
\bb{xx}
B^{-1/2}E(\tau_{\red k} M)B^{1/2} ~=~ \sum_{j=0}^\infty \frac{\l(\tau_{\red k} B^{1/2}{\red P}B^{1/2}\r)^j}{j!}
~ = ~ E(\tau_{\red k} B^{1/2}{\red P}B^{1/2}).
\ee
It then follows from \R{xx},
\bbbb
\l\|\prod_{k=0}^\ell E(\tau_{\red k} M)\r\|_2 & = & \l\|\prod_{k=0}^\ell\l(B^{1/2}E(\tau_k B^{1/2}{\red P}B^{1/2})B^{-1/2}\r)\r\|_2\\
& = & \l\|B^{1/2}\l(\prod_{k=0}^\ell E(\tau_k B^{1/2}{\red P}B^{1/2})\r)B^{-1/2}\r\|_2\\
& \le & \|B^{1/2}\|_2\|B^{-1/2}\|_2\prod_{k=0}^\ell E(\tau_k \mu (B^{1/2}{\red P}B^{1/2}))
~\leq~  \sqrt{\kappa(B)}.
\eeee
\end{proof}

\begin{lemma}
If \R{cfl} holds and $0\le \tau_k \le 1$ for all $0\le k\le \ell,$ then
$$\l\|\prod_{k=0}^\ell\l(I-\frac{\tau_{\red k}}{2}M\r)^{-1}\l(I+\frac{\tau_{\red k}}{2}M\r)\r\|_2 \le C.$$
\end{lemma}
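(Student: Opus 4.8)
The plan is to reduce the product of Crank--Nicolson amplification matrices to a product of matrices that are similar, via the same symmetrizing congruence used in Lemmas 4.4 and 4.6, to a product involving the symmetric matrix $\widetilde P = B^{1/2}PB^{1/2}$. First I would observe that the rational function $r(z) = (1 - z/2)^{-1}(1 + z/2)$ commutes with the similarity transformation, so that
\[
B^{-1/2}\l(I - \tfrac{\tau_k}{2}M\r)^{-1}\l(I + \tfrac{\tau_k}{2}M\r)B^{1/2}
= \l(I - \tfrac{\tau_k}{2}\widetilde P\r)^{-1}\l(I + \tfrac{\tau_k}{2}\widetilde P\r),
\]
exactly as in equation \R{xx}; this follows because $B^{-1/2}MB^{1/2} = \widetilde P$ and rational functions of a matrix are preserved under conjugation. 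Consequently
\[
\l\|\prod_{k=0}^\ell\l(I-\tfrac{\tau_k}{2}M\r)^{-1}\l(I+\tfrac{\tau_k}{2}M\r)\r\|_2
\le \|B^{1/2}\|_2\,\|B^{-1/2}\|_2\,\prod_{k=0}^\ell\l\|\l(I-\tfrac{\tau_k}{2}\widetilde P\r)^{-1}\l(I+\tfrac{\tau_k}{2}\widetilde P\r)\r\|_2
= \sqrt{\kappa(B)}\,\prod_{k=0}^\ell\l\|r\l(\tau_k\widetilde P\r)\r\|_2.
\]

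Next I would bound each factor $\|r(\tau_k\widetilde P)\|_2$ by $1$. Since $\widetilde P$ is symmetric (Lemma 4.2 plus the diagonality of $B$, as in Lemma 4.4) its spectral norm equals its spectral radius, and $\|r(\tau_k\widetilde P)\|_2 = \max_j |r(\tau_k\lambda_j)|$ where the $\lambda_j$ are the eigenvalues of $\widetilde P$. By Lemma 4.4 these eigenvalues coincide (up to the congruence) with those of $M$ and are all real and nonpositive; equivalently $\mu(\widetilde P) < 0$ and one checks directly that every eigenvalue of $\widetilde P$ is $\le 0$. The scalar [1/1] Pad\'e function satisfies $|r(x)| = |(2+x)/(2-x)| \le 1$ for every real $x \le 0$ (the condition \R{cfl} together with $0 \le \tau_k \le 1$ guarantees $\|(\tau_k/2)M\|_2 < 1$, hence $I - (\tau_k/2)\widetilde P$ is invertible and the factorization is legitimate). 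Therefore $\|r(\tau_k\widetilde P)\|_2 \le 1$ for each $k$, the product telescopes to a quantity at most $1$, and we obtain the bound with $C = \sqrt{\kappa(B)} = \sqrt{\|B^{-1}\|_2\|B\|_2}$, the same constant appearing in Lemma 4.6.

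The main obstacle — though it is a mild one — is the bookkeeping that lets us move the spectral-norm estimate past the congruence without picking up a spurious dependence on $N$ or on the number of steps $\ell$: the matrices $r(\tau_k M)$ are \emph{not} themselves symmetric, so one cannot estimate $\|\prod_k r(\tau_k M)\|_2$ directly as a product of spectral radii. The trick, exactly as in Lemma 4.6, is to conjugate the \emph{entire} product by $B^{1/2}$ first, collapsing all the interior $B^{-1/2}B^{1/2}$ pairs, and only afterwards split off the single outer factor $\|B^{1/2}\|_2\|B^{-1/2}\|_2 = \sqrt{\kappa(B)}$; the remaining product is of symmetric matrices and is handled eigenvalue-by-eigenvalue. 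A secondary point to be careful about is verifying that $I - (\tau_k/2)\widetilde P$ is invertible for each $k$ in the range considered, which is immediate from Lemma 2.2 (or directly from $\lambda_j \le 0$, so $1 - (\tau_k/2)\lambda_j \ge 1 > 0$). No new hypotheses beyond \R{cfl} and $0 \le \tau_k \le 1$ are needed, and the constant $C$ is independent of $\ell$, which is precisely what a stability statement requires.
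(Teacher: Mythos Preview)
Your route differs from the paper's. The paper replaces each Crank--Nicolson factor by its Pad\'e expansion $r(\tau_kM)=E(\tau_kM)+\OO(\tau_k^3)$ and then invokes Lemma~4.5 on the exponential product, ending with $C=\sqrt{\kappa(B)}+cT$. You instead try to bound each $\|r(\tau_k\widetilde P)\|_2$ by $1$ directly after the $B^{1/2}$-conjugation, which would be cleaner and would give the sharper constant $C=\sqrt{\kappa(B)}$.

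There is, however, a genuine gap: $\widetilde P=B^{1/2}PB^{1/2}$ is \emph{not} symmetric on a nonuniform grid. Since $(B^{1/2}PB^{1/2})^{\tT}=B^{1/2}P^{\tT}B^{1/2}$, symmetry of $\widetilde P$ would force $P=P^{\tT}$, which fails whenever the $h_i$ are unequal. Lemmas~4.2 and~4.4 only establish that $\widetilde P$ is \emph{similar} to a symmetric matrix (hence has real negative eigenvalues) and that $\mu(\widetilde P)<0$; neither gives you $\|r(\tau_k\widetilde P)\|_2=\max_j|r(\tau_k\lambda_j)|$. So the step ``spectral norm equals spectral radius'' is unjustified, and the product bound does not follow as written.

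Your argument is salvageable using the tool Lemma~4.4 actually supplies. From $\mu(\widetilde P)\le 0$ one has, for every $y$,
\[
\Bigl\|\Bigl(I+\tfrac{\tau_k}{2}\widetilde P\Bigr)y\Bigr\|_2^2-\Bigl\|\Bigl(I-\tfrac{\tau_k}{2}\widetilde P\Bigr)y\Bigr\|_2^2
=\tau_k\bigl\langle(\widetilde P+\widetilde P^{\tT})y,\,y\bigr\rangle\le 2\tau_k\,\mu(\widetilde P)\,\|y\|_2^2\le 0,
\]
and setting $y=(I-\tfrac{\tau_k}{2}\widetilde P)^{-1}x$ yields $\|r(\tau_k\widetilde P)x\|_2\le\|x\|_2$, i.e.\ $\|r(\tau_k\widetilde P)\|_2\le 1$. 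With this dissipativity estimate in place of the symmetry claim, the rest of your argument goes through and delivers $C=\sqrt{\kappa(B)}$.
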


\begin{proof}
Recalling the [1/1] Pad{\'e} approximation utilized
in Section 2, we have
$$\l(I-\frac{\tau_k}{2}M\r)^{-1}\l(I+\frac{\tau_k}{2}M\r) ~=~ E(\tau_k M) + \OO\l(\;\tau_k^3\;\r).$$
By the definition of $T$ and $\{\tau_\ell\}_{\ell\ge 0},$ we have $\textstyle\sum_{k=0}^\ell \tau_k \le T$ 
and $\tau = \textstyle\max_{0\le k\le\ell} \{\tau_k\}\le 1.$ Now, based on Lemma 4.5,
\bbbb
\l\|\prod_{k=0}^\ell\l(I-\frac{\tau_k}{2}M\r)^{-1}\l(I+\frac{\tau_k}{2}M\r)\r\|_2
& = & \l\|\prod_{k=0}^\ell E(\tau_k M) + \OO\l(\tau_k^3\r)\r\|_2
~\le ~ \l\|\prod_{k=0}^\ell E(\tau_k M)\r\|_2 + c\tau^2\sum_{k=0}^\ell\tau_k\\
& \le & \sqrt{\kappa(B)} + cT ~~\le~~ C,
\eeee
which yields the desired bound.
\end{proof}

Combining the above results gives the following theorem.

\begin{thm}
If \R{cfl} holds and $0\le \tau_k\le 1$ for all $0\le k\le \ell,$ then the semi-adaptive nonuniform
method \R{c3} with the source term frozen is unconditionally stable in the von Neumann sense
under the spectral norm, that is,
$$\|z_{\ell+1}\|_2 \leq C \|z_{0}\|_2,~~~\ell\geq 0, $$
where $z_0=v_0-\tilde{v}_0$ is an initial error, $z_{\ell+1}=v_{\ell+1}-\tilde{v}_{\ell+1}$ is the
$(\ell+1)$th perturbed error vector, and $C>0$ is a constant independent of $\ell$ and $\tau_k$ for each $0\le k\le \ell.$
\end{thm}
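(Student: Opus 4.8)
The plan is to derive a closed-form expression for the propagated error $z_{\ell+1}$ from the frozen-source version of the scheme \R{c3}, and then bound its spectral norm using the uniform bound on the product of the Padé propagators established in Lemma 4.6. First I would observe that when the source term is frozen, the nonlinear contribution $\frac{\tau_\ell}{2}g(v_{\ell+1})$ and $\frac{\tau_\ell}{2}g(v_\ell)$ either cancel in the difference $v_{\ell+1}-\tilde v_{\ell+1}$ (since $g$ is applied at the same frozen value for both the exact and perturbed trajectories) or contribute a term that is itself linear and can be absorbed; the cleanest route is to note that with the source frozen the recurrence for the error is purely
$$z_{k+1} ~=~ \l(I-\frac{\tau_k}{2}M\r)^{-1}\l(I+\frac{\tau_k}{2}M\r) z_k,~~~k=0,1,\dots,\ell.$$
Unwinding this recursion gives $z_{\ell+1} = \l[\prod_{k=0}^{\ell}\l(I-\frac{\tau_k}{2}M\r)^{-1}\l(I+\frac{\tau_k}{2}M\r)\r] z_0.$

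Next I would take the spectral norm of both sides and apply submultiplicativity together with Lemma 4.6, which under the CFL condition \R{cfl} and $0\le\tau_k\le 1$ gives the uniform bound
$$\l\|\prod_{k=0}^{\ell}\l(I-\frac{\tau_k}{2}M\r)^{-1}\l(I+\frac{\tau_k}{2}M\r)\r\|_2 ~\le~ C,$$
with $C = \sqrt{\kappa(B)} + cT$ independent of $\ell$ and of the individual step sizes $\tau_k$. This yields $\|z_{\ell+1}\|_2 \le C\,\|z_0\|_2$ directly, which is precisely the claimed von Neumann stability estimate under the spectral norm. The word "unconditionally" here refers to the absence of any additional stability restriction beyond the CFL-type bound \R{cfl} already needed for well-posedness of the scheme (invertibility of $I-\frac{\tau_k}{2}M$, Lemma 2.2).

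The main obstacle is the bookkeeping at the point where the $\OO(\tau_k^3)$ Padé remainder meets the telescoping product: one must be careful that in passing from the true one-step propagator $\l(I-\frac{\tau_k}{2}M\r)^{-1}\l(I+\frac{\tau_k}{2}M\r)$ to $E(\tau_k M)$ the accumulated remainder over $\ell+1$ steps stays $\OO(T\tau^2)$ rather than growing like $\ell$, which is exactly what Lemma 4.6 already handles via $\sum_{k=0}^{\ell}\tau_k\le T$; so in the end the theorem is essentially a corollary and the only genuine content is invoking Lemma 4.6 correctly and confirming that freezing the source makes the error recursion exactly linear with propagator equal to the Padé operator. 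I would state this explicitly to justify that the constant $C$ depends only on $\kappa(B)$, $T$, and the constant $c$ from the Padé expansion, and on nothing else.
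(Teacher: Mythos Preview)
Your proposal is correct and follows essentially the same route as the paper: write the frozen-source error recursion as $z_{k+1}=\l(I-\frac{\tau_k}{2}M\r)^{-1}\l(I+\frac{\tau_k}{2}M\r)z_k$, iterate to obtain the product formula, and invoke Lemma~4.6 to bound the spectral norm of the product by $C$. Your additional remarks on the constant $C=\sqrt{\kappa(B)}+cT$ and the Pad\'e remainder bookkeeping are accurate but already absorbed into Lemma~4.6, so the paper's proof is just the three-line core you identified.
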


\begin{proof}
When the nonlinear source term is frozen, $z_{\ell+1}$ takes the form of
\bbb
z_{\ell+1} &=& \l(I-\frac{\tau_\ell}{2}M\r)^{-1}\l(I+\frac{\tau_\ell}{2}M\r) z_\ell,~~~\ell\geq 0.\label{a}
\eee
Iterating \R{a} gives
\bb{st11}
z_{\ell+1} ~~=~~ \prod_{k=0}^\ell \l(I-\frac{\tau_{\red k}}{2}M\r)^{-1}\l(I+\frac{\tau_{\red k}}{2}M\r)z_0.
\ee
Taking the norm of both sides of \R{st11}, it follows that
$$\|z_{\ell+1}\|_2  ~\le~  \l\|\prod_{k=0}^\ell\l(I-\frac{\tau_{\red k}}{2}M\r)^{-1}\l(I+\frac{\tau_{\red k}}{2}
M\r)\r\|_2\|z_0\|_2 ~\le~ C\|z_0\|_2,$$
where $C$ is a positive constant independent of $\ell$ and $\tau_{k}$ for each $0\leq k\leq\ell.$
\end{proof}

We now consider the case without freezing the nonlinear source term in \R{c3}. In the following, 
let $t_Q$ be the time at which numerical quenching occurs, that is, $\|v_Q\|_\infty \ge 1,$ and 
recall that $t_m = t_0+\textstyle\sum_{k=0}^{m-1}\tau_k$ for any $m\ge 0.$ Also, let
$$\Phi_k = \l(I-\frac{\tau_k}{2}M\r)^{-1}\l(I+\frac{\tau_k}{2}M\r),~0\le k\le \ell.$$

\begin{thm}
If \R{cfl} holds and $\tau_k$ sufficiently small for all $0\le k\le \ell,$ then the semi-adaptive nonuniform 
method \R{c3} is unconditionally stable in the von Neumann sense, that is, 
for every $t_m <t_Q$ there exists a constant $C(t_m)>0$ such that
$$ \|z_{\ell+1}\|_2 \leq C(t_m)\|z_{0}\|_2,~~~0\le \ell\le m, $$
where $z_0=v_0-\tilde{v}_0$ is an initial error, $z_{\ell+1}=v_{\ell+1}-\tilde{v}_{\ell+1}$ is the
$(\ell+1)$th perturbed error vector, and $C(t_m)>0$ is a constant independent of $\ell$ and $\tau_k$ for each $0\le k\le\ell.$
\end{thm}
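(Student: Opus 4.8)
The plan is to carry out the perturbation argument of Theorem 4.1 again, now without freezing the nonlinear term: the mismatch $g(v_k)-g(\tilde v_k)$ is treated as an inhomogeneity and the resulting estimate is closed by a discrete Grönwall inequality. Writing the scheme \R{c3} for $v$ and for the perturbed sequence $\tilde v$ and subtracting, the error $z_k=v_k-\tilde v_k$ satisfies
$$z_{k+1}~=~\Phi_k z_k+\frac{\tau_k}{2}\Phi_k d_k+\frac{\tau_k}{2}\,d_{k+1},\qquad d_k:=g(v_k)-g(\tilde v_k),\quad 0\le k\le\ell.$$
Since $t_m<t_Q$, the iterates $v_k$ for $0\le k\le m$ (and, provided the perturbation and the $\tau_k$ are small enough, also the $\tilde v_k$) remain in a set $[0,\eta_m]^N$ with $\eta_m<1$ on which $g$ is $C^1$; hence $g$ is Lipschitz there and $\|d_k\|_2\le L\,\|z_k\|_2$ with $L=L(t_m)=\varphi_{\max}\max\{f'(u):0\le u\le\eta_m\}/\sigma_{\min}$. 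This is the only place the finiteness of the stability constant enters, and it is the reason the constant must depend on $t_m$ and blows up as $t_m\to t_Q^-$.

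The crucial point is that the estimate must not be iterated factor by factor: a stepwise bound would accumulate a product of $\ell+1$ norms $\|\Phi_k\|_2$, each of which may exceed $1$ because $M$ is nonsymmetric and $\Phi_k$ need not be a spectral contraction. Instead I would unroll the recursion once,
$$z_{\ell+1}~=~\Bigl(\prod_{k=0}^{\ell}\Phi_k\Bigr)z_0+\sum_{j=0}^{\ell}\Bigl(\prod_{k=j+1}^{\ell}\Phi_k\Bigr)\Bigl(\frac{\tau_j}{2}\Phi_j d_j+\frac{\tau_j}{2}\,d_{j+1}\Bigr),$$
and invoke Lemma 4.6, whose proof is insensitive to where the product starts (for any tail range, $\sum_{k=j+1}^{\ell}\tau_k\le T$ and $\mu(B^{1/2}PB^{1/2})<0$ still hold, the latter by Lemma 4.4): every product $\prod_{k=j+1}^{\ell}\Phi_k$, and in particular the single factors $\Phi_j$, is bounded in the spectral norm by one constant $C$. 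Combined with $\|d_k\|_2\le L\|z_k\|_2$ this gives, for $0\le\ell\le m$,
$$\|z_{\ell+1}\|_2~\le~C\,\|z_0\|_2+\frac{CL}{2}\sum_{j=0}^{\ell}\tau_j\bigl(C\,\|z_j\|_2+\|z_{j+1}\|_2\bigr).$$

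Finally I would isolate the $j=\ell$ contribution $\tfrac12 CL\tau_\ell\|z_{\ell+1}\|_2$ and move it to the left — legitimate once $\tau_\ell$ is small enough that $\tfrac12 CL\tau_\ell\le\tfrac12$ — leaving an inequality of the form $\|z_{\ell+1}\|_2\le 2C\|z_0\|_2+C'\sum_{j=0}^{\ell}\tau_j\|z_j\|_2$ with $C'=C'(t_m)$, to which the discrete Grönwall lemma applies; since $\sum_{j=0}^{\ell}\tau_j\le t_{m+1}-t_0<\infty$ for $\ell\le m$, this yields $\|z_{\ell+1}\|_2\le 2C\,e^{C'(t_{m+1}-t_0)}\|z_0\|_2=:C(t_m)\|z_0\|_2$ with $C(t_m)$ independent of $\ell$ and of the individual step sizes, as claimed. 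The main obstacle I expect is the self-consistency of the Lipschitz bound: $L$ is finite precisely because quenching has not occurred by $t_m$, so one must first secure that the perturbed iterates $\tilde v_k$ also stay in $[0,\eta_m]^N$ for $k\le m$ — a restriction on $\|z_0\|_2$ and on the $\tau_k$ — otherwise the Grönwall loop cannot be closed. This "conditional stability up to the quenching time" is exactly what prevents the constant from being absolute, in contrast with the frozen-source Theorem 4.1.
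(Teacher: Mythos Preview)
Your approach is correct and takes a genuinely different route from the paper's proof. The paper linearizes $g(v_k)-g(\tilde v_k)=g_v(\xi_k)z_k$ via the mean value theorem, rearranges the recursion as $\bigl(I-\tfrac{\tau_\ell}{2}g_v(\xi_{\ell+1})\bigr)z_{\ell+1}=\Phi_\ell\bigl(I+\tfrac{\tau_\ell}{2}g_v(\xi_\ell)\bigr)z_\ell$, and then replaces each factor $\bigl(I\pm\tfrac{\tau_k}{2}g_v(\xi)\bigr)^{\pm1}$ and $\Phi_k$ by matrix exponentials plus $O(\tau_k^2)$ remainders. After iterating, it uses that $g_v$ is diagonal and hence commutes with $B^{\pm1/2}$; this lets the similarity $E(\tau_kM)=B^{1/2}E(\tau_k\tilde P)B^{-1/2}$ (with $\tilde P=B^{1/2}PB^{1/2}$) telescope through the entire product, leaving a single $\sqrt{\kappa(B)}$ in front and exponentials bounded via Lemma~4.1 and $G(t_m)=\max_k\|g_v(\xi_k)\|_2$. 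The resulting constant is explicit: $C(t_m)=\sqrt{\kappa(B)}\,e^{G(t_m)T}+c_2\tau T$.

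Your Duhamel-plus-Gr\"onwall argument is more elementary and more robust: it requires neither the exponential approximations nor the commutativity of $g_v$ with $B^{\pm1/2}$, so it would survive a nondiagonal Jacobian. The price is a less transparent constant. One small point to tidy: after moving only the $j=\ell$ contribution to the left, the remaining sum still contains terms $\tfrac{\tau_j}{2}\|z_{j+1}\|_2$ for $j<\ell$; a reindex $\sum_{j=0}^{\ell-1}\tau_j\|z_{j+1}\|_2=\sum_{j=1}^{\ell}\tau_{j-1}\|z_j\|_2$ (or simply bounding $\tau_{j-1}\le\tau$) is needed before the inequality is in the clean Gr\"onwall form you stated.
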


\begin{proof}
By definition we have
\bbbb
v_{\ell+1} & = & \Phi_\ell \l(v_\ell+\frac{\tau_\ell}{2}g(v_\ell)\r) + \frac{\tau_\ell}{2}g(v_{\ell+1}),~~~\ell\ge 0.
\eeee
It follows that
\bbbb
z_{\ell+1} & = & \Phi_\ell z_\ell + \frac{\tau_\ell}{2}\Phi_\ell(g(v_\ell) - g(\tilde{v}_\ell)) + \frac{\tau_\ell}{2}(g(v_{\ell+1})-g(\tilde{v}_{\ell+1}))\\
& = & \Phi_\ell z_\ell + \frac{\tau_\ell}{2}\Phi_\ell g_v(\xi_\ell)z_\ell + \frac{\tau_\ell}{2}g_v(\xi_{\ell+1})z_{\ell+1},\\
\eeee
where $\xi_k\in\LL(v_k,\tilde{v}_k),~k=\ell,\ell+1.$ Rearranging the above equality, we have
$$\l(I-\frac{\tau_\ell}{2}g_v(\xi_{\ell+1})\r)z_{\ell+1}
= \Phi_\ell\l(I+\frac{\tau_\ell}{2}g_v(\xi_\ell)\r)z_\ell.$$
Further, it follows that for {\red $\tau_k\rightarrow 0^+$ being sufficiently small,}
$$\l(I-\frac{\tau_k}{2}g_v(\xi)\r)^{-1}=E\l(\frac{\tau_k}{2}g_v(\xi)\r)+\OO\l(\tau_k^2\r)\quad\mbox{and}\quad I+\frac{\tau_k}{2}g_v(\xi)
=E\l(\frac{\tau_k}{2}g_v(\xi)\r)+\OO\l(\tau_k^2\r).$$
Thus,
\bbbb
z_{\ell+1} & = & \l(I-\frac{\tau_\ell}{2}g_v(\xi_{\ell+1})\r)^{-1}\Phi_\ell\l(I+\frac{\tau_\ell}{2}g_v(\xi_\ell)\r)z_\ell\nnn\\
& =& \l\{\prod_{k=0}^\ell \l[E\l(\frac{\tau_k}{2}g_v(\xi_{k+1})\r)+\OO\l(\tau_k^2\r)\r]\Phi_k \l[E\l(\frac{\tau_k}{2}g_v(\xi_k)\r)+\OO\l(\tau_k^2\r)\r]\r\}z_0\\
& =& \l\{\prod_{k=0}^\ell \l[E\l(\frac{\tau_k}{2}g_v(\xi_{k+1})\r)\r]\Phi_k \l[E\l(\frac{\tau_k}{2}g_v(\xi_k)\r)\r]+\OO\l(\sum_{k=0}^\ell\tau_k^2\r)\r\}z_0.
\eeee
Letting $G(t_m) = \max_{0\le k\le m} \l\|g_v(\xi_k(t))\r\|_2$ {\red and $\tilde{P} = B^{1/2}PB^{1/2}$} we have
\bbbb
\|z_{\ell+1}\|_2 & \le & \l\{\l\|\prod_{k=0}^\ell E\l(\frac{\tau_k}{2}g_v(\xi_{k+1})\r)\Phi_k E\l(\frac{\tau_k}{2}g_v(\xi_k)\r)\r\|_2+c_1\sum_{k=0}^\ell \tau_k^2 \r\}\|z_0\|_2\nnn\\
& {\red \le} & {\red\l\{\l\|\prod_{k=0}^\ell E\l(\frac{\tau_k}{2}g_v(\xi_{k+1})\r) E(\tau_kM) E\l(\frac{\tau_k}{2}g_v(\xi_k)\r)\r\|_2+c_2\sum_{k=0}^\ell \tau_k^2 \r\}\|z_0\|_2}\nnn\\
& {\red \le} & {\red\l\{\l\|\prod_{k=0}^\ell E\l(\frac{\tau_k}{2}g_v(\xi_{k+1})\r)B^{1/2} E(\tau_k{\red \tilde{P}})B^{-1/2} E\l(\frac{\tau_k}{2}g_v(\xi_k)\r)\r\|_2+c_2\sum_{k=0}^\ell \tau_k^2 \r\}\|z_0\|_2}\nnn\\
& {\red =} & {\red\l\{\l\|\prod_{k=0}^\ell B^{1/2}E\l(\frac{\tau_k}{2}g_v(\xi_{k+1})\r) E(\tau_k{\red \tilde{P}}) E\l(\frac{\tau_k}{2}g_v(\xi_k)\r)B^{-1/2}\r\|_2+c_2\sum_{k=0}^\ell \tau_k^2 \r\}\|z_0\|_2}\nnn\\
& {\red =} & {\red\l\{\l\|B^{1/2}\l[\prod_{k=0}^\ell E\l(\frac{\tau_k}{2}g_v(\xi_{k+1})\r) E(\tau_k{\red \tilde{P}}) E\l(\frac{\tau_k}{2}g_v(\xi_k)\r)\r]B^{-1/2}\r\|_2+c_2\sum_{k=0}^\ell \tau_k^2 \r\}\|z_0\|_2}\nnn\\
& {\red \le} & {\red\l\{\sqrt{\kappa(B)}\prod_{k=0}^\ell \l\|E\l(\frac{\tau_k}{2}g_v(\xi_{k+1})\r)\r\|_2 \l\|E(\tau_k{\red \tilde{P}})\r\|_2 \l\|E\l(\frac{\tau_k}{2}g_v(\xi_k)\r)\r\|_2+c_2\sum_{k=0}^\ell \tau_k^2 \r\}\|z_0\|_2}\nnn\\
& {\red \le} & {\red\l\{\sqrt{\kappa(B)}\prod_{k=0}^\ell E(\tau_kG(t_m)) + c_2\sum_{k=0}^\ell \tau_k^2 \r\}\|z_0\|_2}\nnn\\
& {\red \le} & {\red \l\{\sqrt{\kappa(B)}E\l(G(t_m)\sum_{k=0}^\ell \tau_k\r) + c_2\sum_{k=0}^\ell \tau_k\r\}\|z_0\|_2}\nnn\\
& {\red \le} & {\red \l\{\sqrt{\kappa(B)}E(G(t_m)T)+c_2\tau T\r\}\|z_0\|_2 ~ \le ~ C(t_m)\|z_0\|_2,}
\eeee
where $c_1,~c_2$ are positive constants independent of $\ell$ and $\tau_k,~k=0,1,\dots,\ell.$ Since $t_m<t_Q,$ 
it follows that $G(t_m)<\infty$ for all $t_m<t_Q.$ This yields the desired stability.
\end{proof}

{\red\begin{rem}
The nonuniform constant $G(t_m)$ is anticipated to be large within the interval $[t_0,t_m]\subset [t_0,t_Q).$ 
However, we observe in experiments that its values remain to be well-manageable as far as proper stopping
criteria are adopted. This allows for the claimed stability.
\end{rem}}

\section{Numerical Experiments} \clearallnum

We consider the following stochastic Kawarada model problem in the first three experiments:
\bbb
&&\sigma(x)u_t = \frac{1}{a^2}u_{xx}+\frac{\rho(\epsilon)}{(1-u)^\theta},\quad -1<x<1,~t_0<t\le T,\label{num1}\\
&&u(-1,t)=u(1,t)=0,\quad t>t_0,\label{num2}\\
&&u(x,t_0)=u_0(x),\quad -1<x<1,\label{num3}
\eee
where $T<\infty,~\theta>0,~u_0\in C[-1,1],$ and $0\leq u_0\ll 1.$

Without loss of generality, we set $\theta=1,$ $t_0=0,$ $u_0(x)=0.001(1-\cos(2\pi x)),~-1\leq x\leq 1,$ and let our temporal 
adaptations start once $v=\textstyle\max_{-1\le x\le 1}u(x,t)$ 
reaches a certain value $v^*.$ In most computational procedures, we adopt $v^*=0.90$ for such a triggering criterion \cite{Sheng4}. The main 
purposes of our computations are to examine the numerical method built, and to explore impacts of varying physical 
parameters used in the equation. In the first experiment we will focus on the consequences due to variations in domain sizes. 
Quenching behaviors and possible temporal blow-up times are recorded. The second experiment will illustrate impacts of the degeneracy 
on quenching profiles. The third experiment will explore the effects that a stochastic component plays on the numerical solution and 
quenching criteria. Our semi-adaptive algorithm \R{c3} coupled with \R{num2}, \R{num3} is shown to be satisfactorily reliable, 
effective, and accurate. Finally, we extend our pursuits with a two-dimensional stochastic Kawarada equation problem to 
demonstrate the usability and effectiveness of our semi-adaptive infrastructure in higher-dimensional applications. A typical LOD \cite{ADI}
strategy is used.

\subsection*{Experiment 1}

Letting $\sigma(x)=\varphi(\epsilon)\equiv 1$ be fixed, 
we vary the value of $a$ to study its effects on the quenching phenomenon. Since quenching, if it exists, must occur 
at $x_q=0$ \cite{Sheng4,Sheng3}, we may consider herein a set of symmetric grids for the simplicity of
computations. {\red We generate the nonuniform grids in the following way. Consider a parabola designed to have 
a prescribed minimum at $x_{\lfloor (N+2)/2\rfloor}$ and a prescribed maximum at $x_0$ and $x_{N+1}.$ 
Then a standard arc-length adaptation procedure based on the curvature \cite{Beau1} 
produces the nonuniform grids, which is scaled to fit $[-1, 1].$ }

\begin{center}
{\epsfig{file=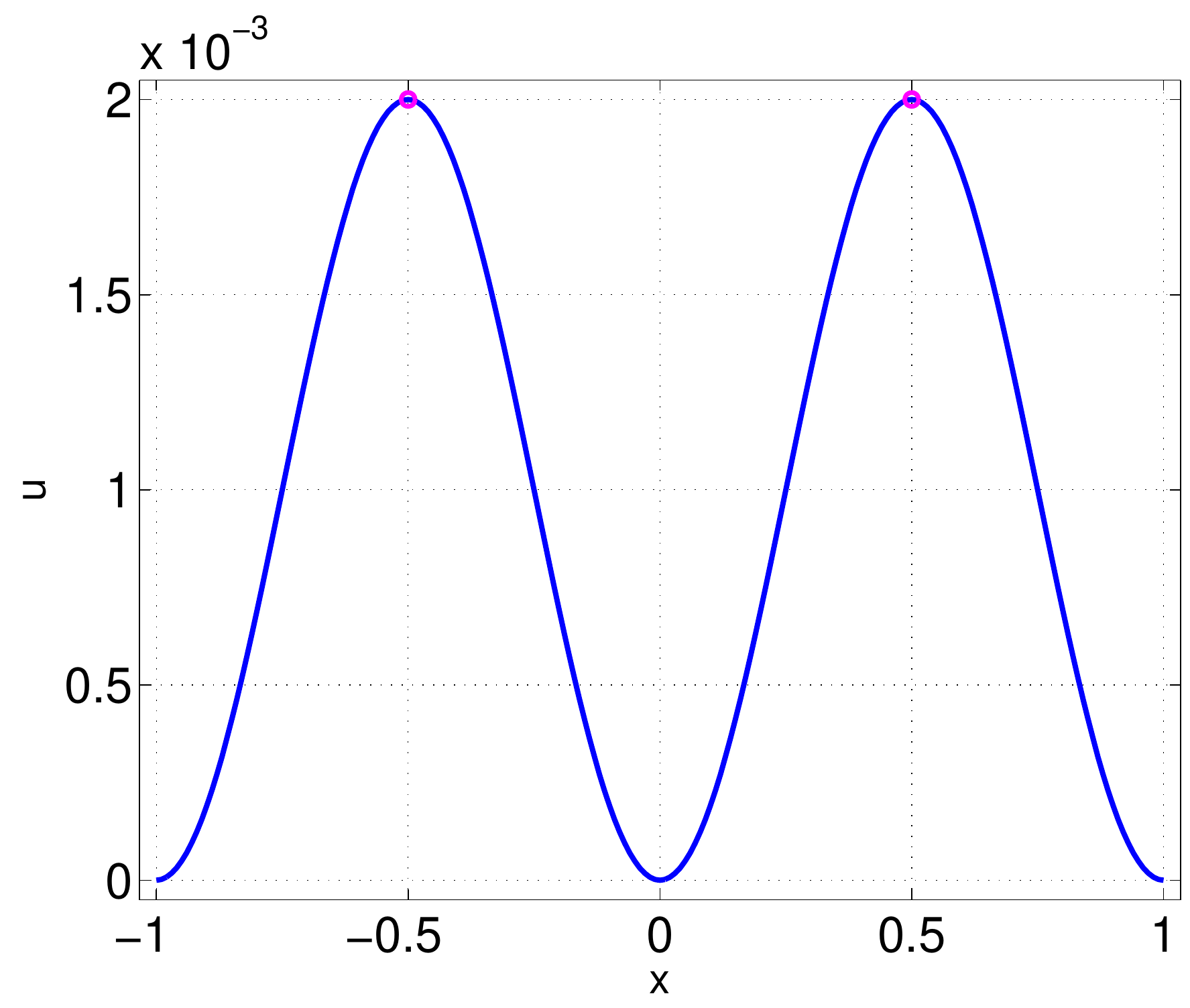,width=2.3in,height=1.28in}}~~
{\epsfig{file=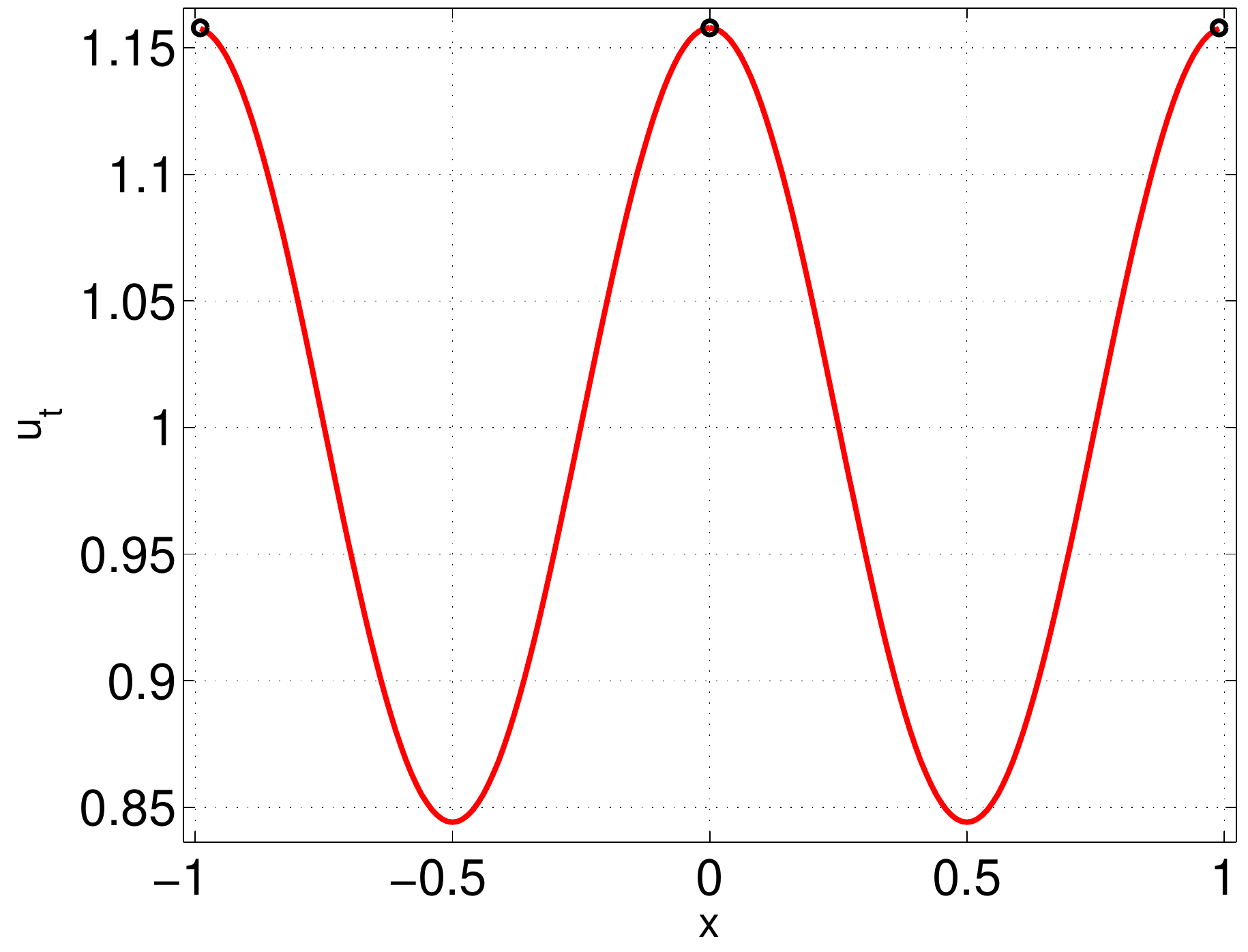,width=2.3in,height=1.19in}}

\parbox[t]{12.8cm}{\scriptsize{\bf Figure 1.} Initial function $u_0(x)=0.001(1-\cos(2\pi x))$ [LEFT] and its
estimated temporal derivative via \R{num1} [RIGHT]. Locations of multiple maximal values
of the functions concerned are symmetric with respect to the origin.}
\end{center}

To commence, we adopt $a=0.5$ and plot $u_0(x)$ and its reference temporal derivative in Fig 1. We note 
that the locations of the twin peaks of the initial functions chosen and triple peaks of the temporal derivative are symmetric 
with respect to the origin.

\begin{center}
{\epsfig{file=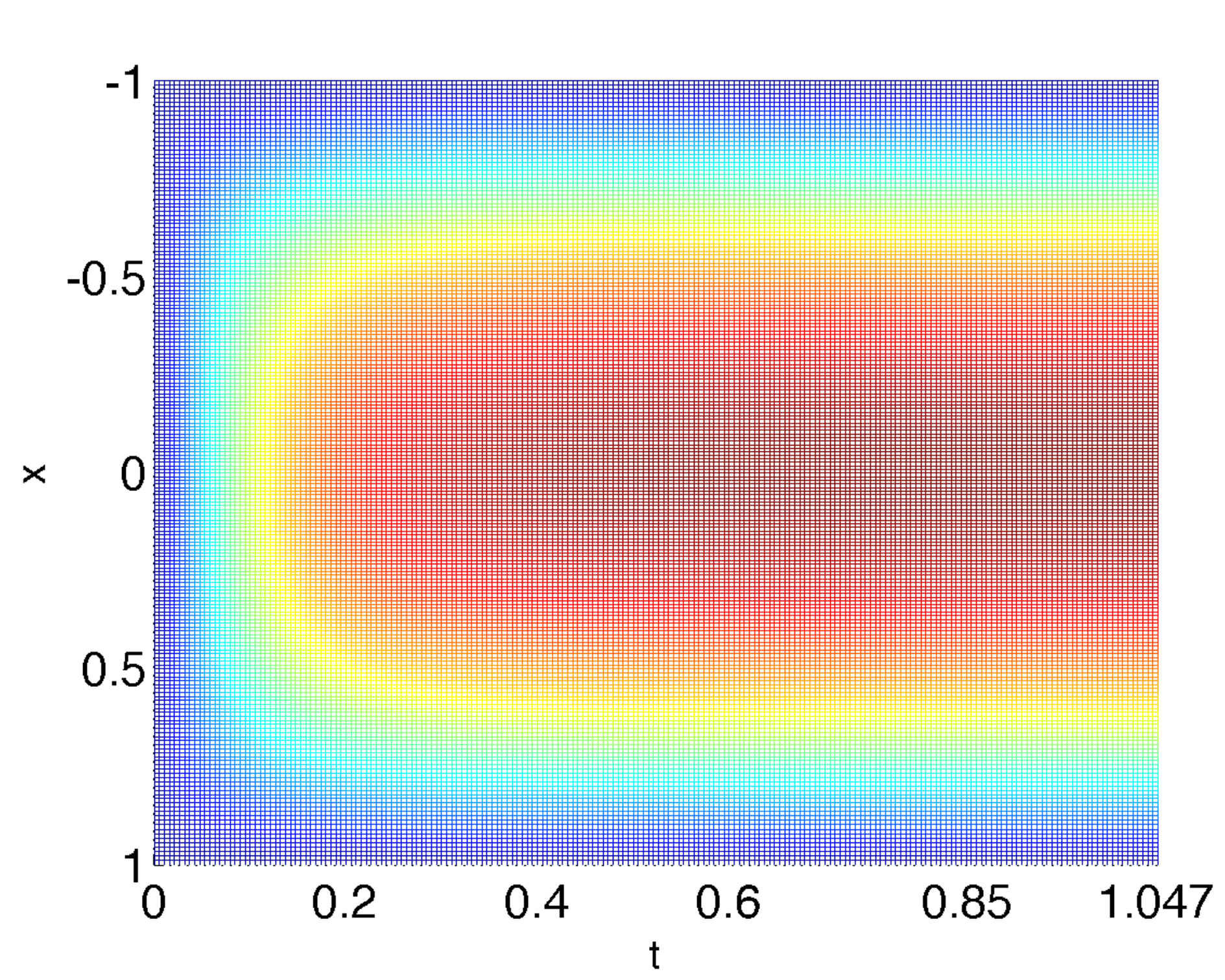,width=2.3in,height=1.28in}}~~
{\epsfig{file=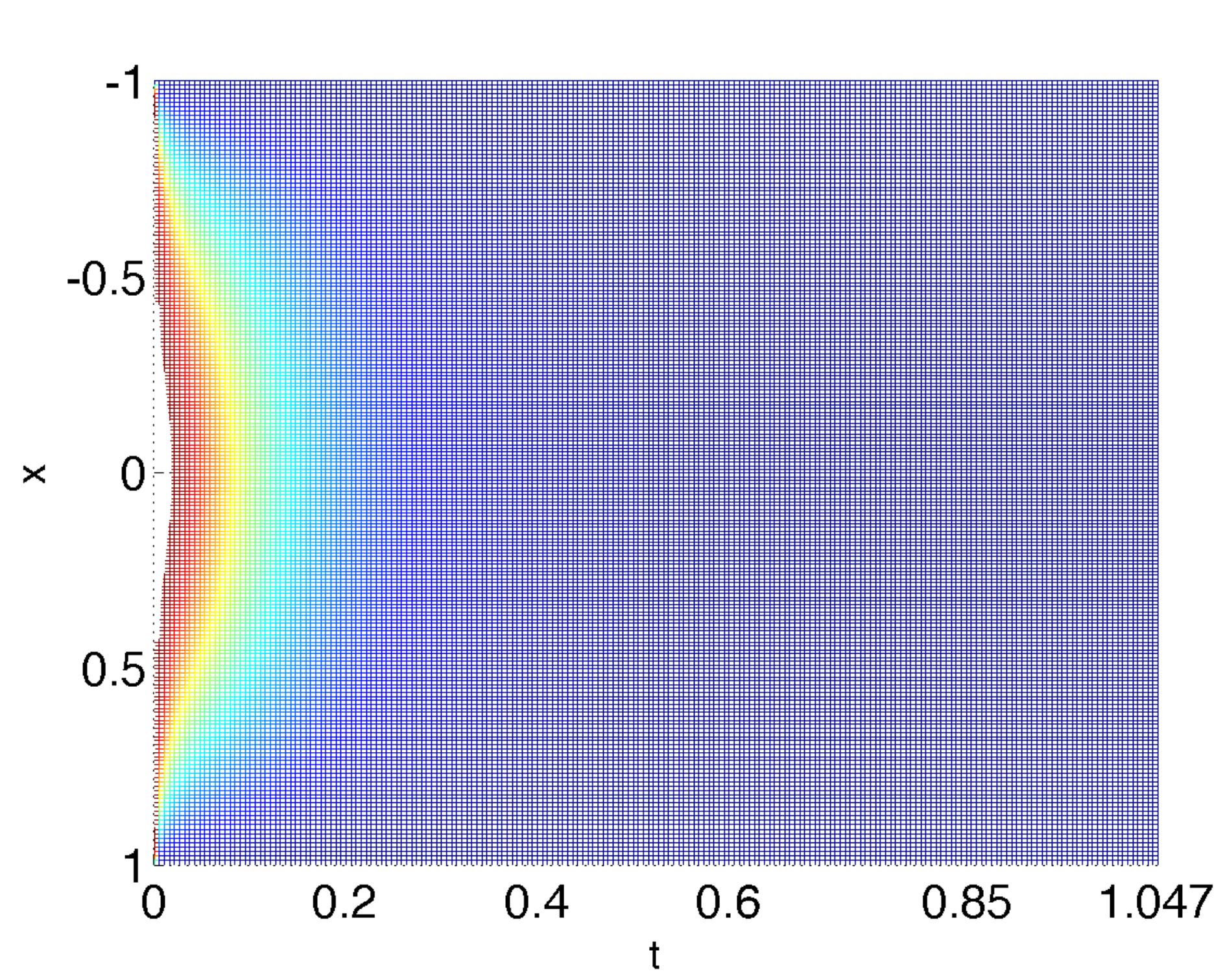,width=2.3in,height=1.28in}}

\parbox[t]{12.8cm}{\scriptsize{\bf Figure 2.} Two-dimensional thermal flow plots of the numerical solution $u$ [LEFT] 
and $u_t$ [RIGHT] for $t\in[0,T].$  It is observed that while the heat flows from the left to the right smoothly,
symmetrically and increases monotonically in the first figure, the flow speed decreases rapidly but monotonically as time 
goes on. There is no quenching found in this situation.}
\end{center}

\begin{center}
{\epsfig{file=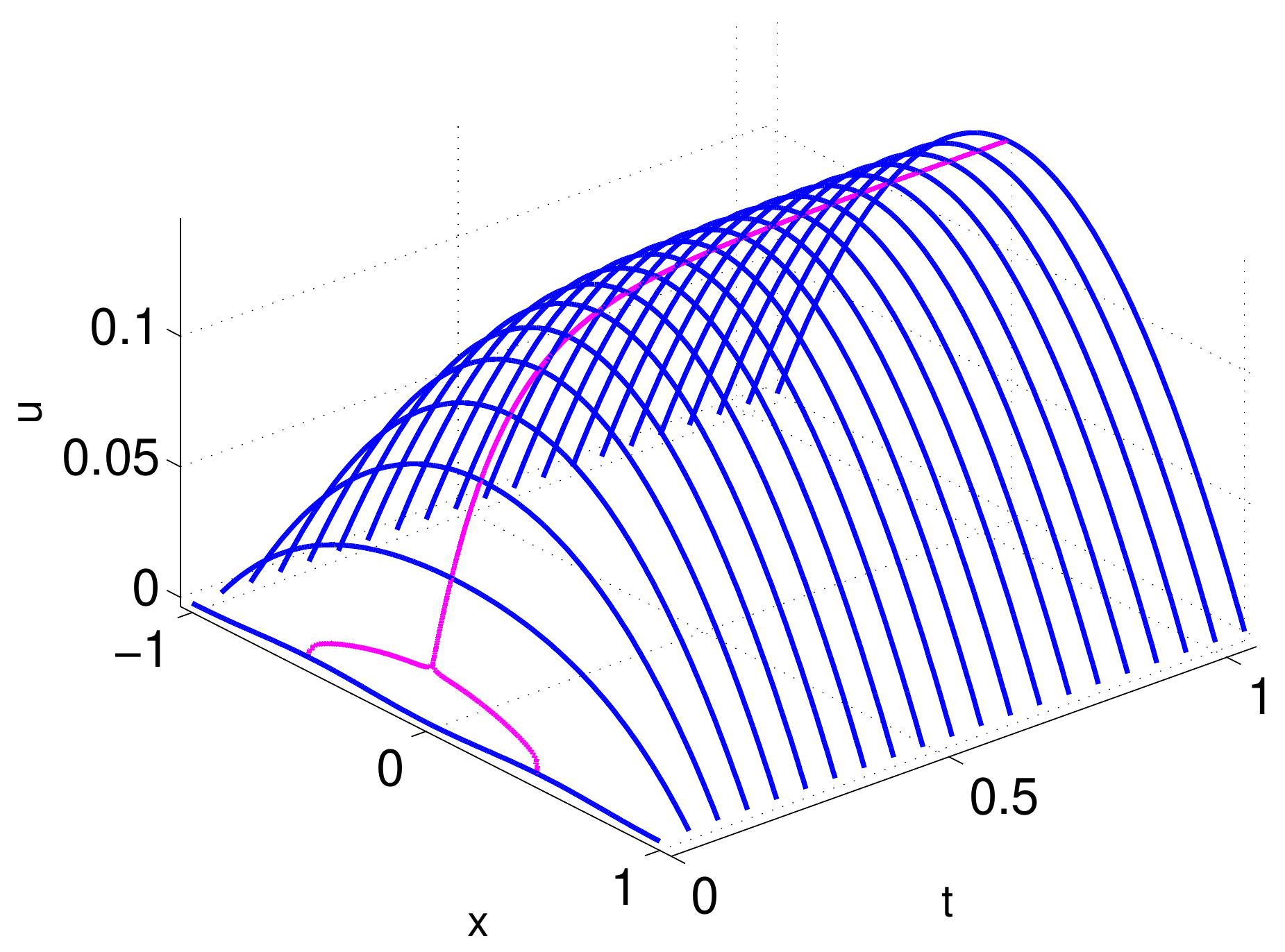,width=2.5in,height=1.38in}}~~
{\epsfig{file=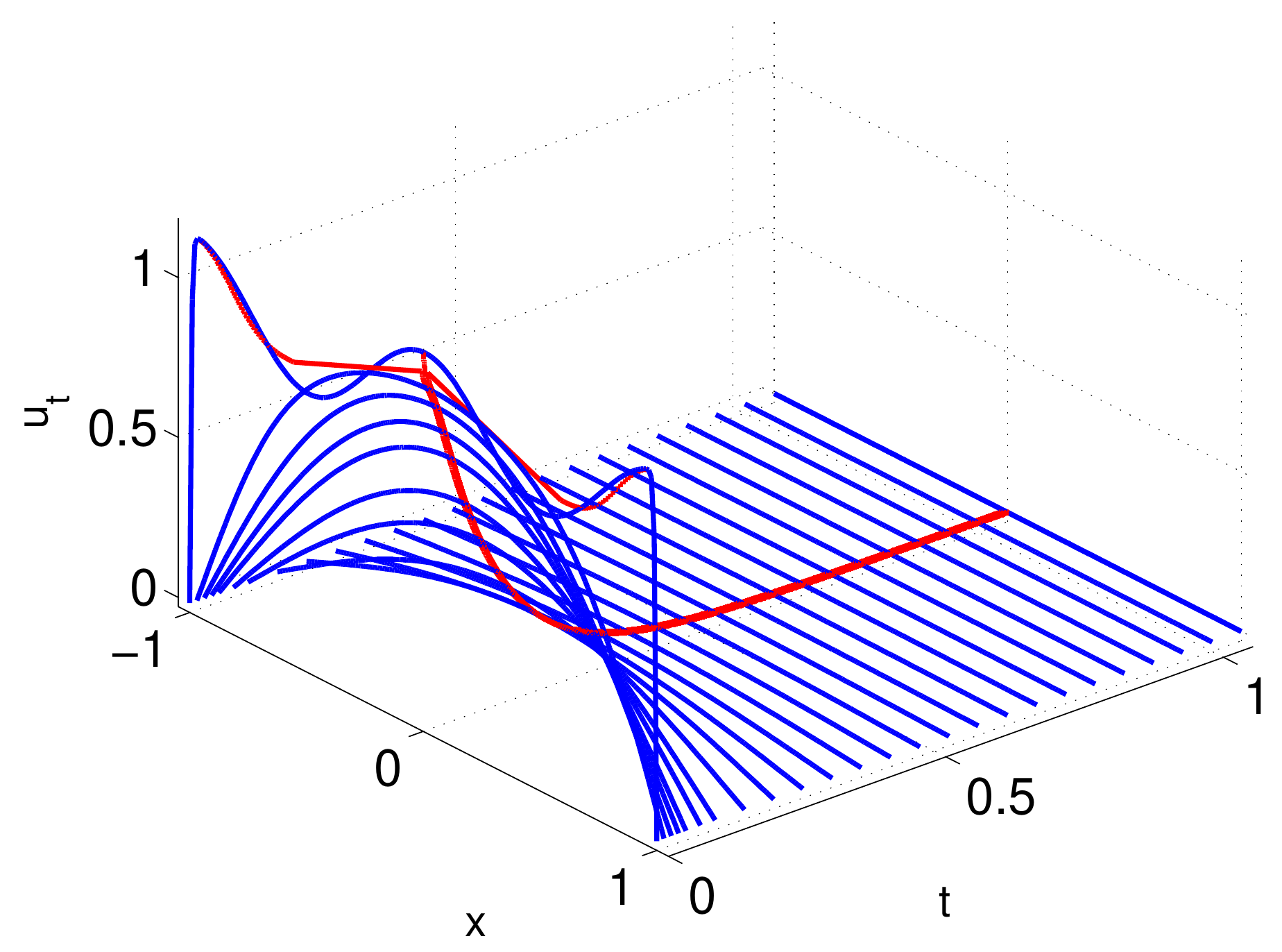,width=2.5in,height=1.38in}}

\parbox[t]{12.8cm}{\scriptsize{\bf Figure 3.} Three-dimensional plots of the numerical solution curvatures of $u$ [LEFT] and 
$u_t$ [RIGHT] for $t\in[0,T].$ The magenta curve represents the maximal value trajectory of $u$ [LEFT], 
and the red curve is for that of $u_t$ [RIGHT] at different $t$-levels for $0<t\leq T.$}
\end{center}

\begin{center}
{\epsfig{file=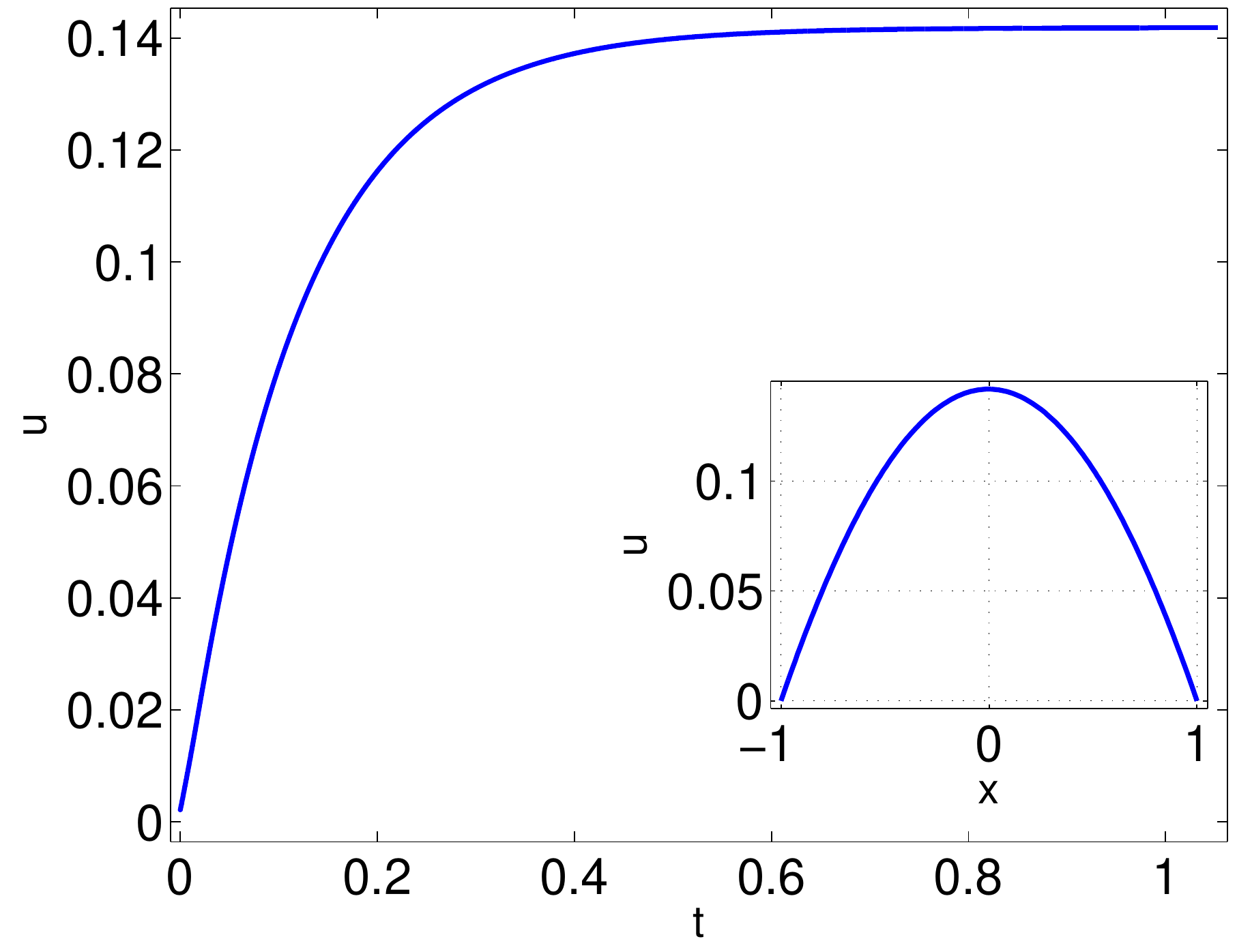,width=2.3in,height=1.28in}}~~
{\epsfig{file=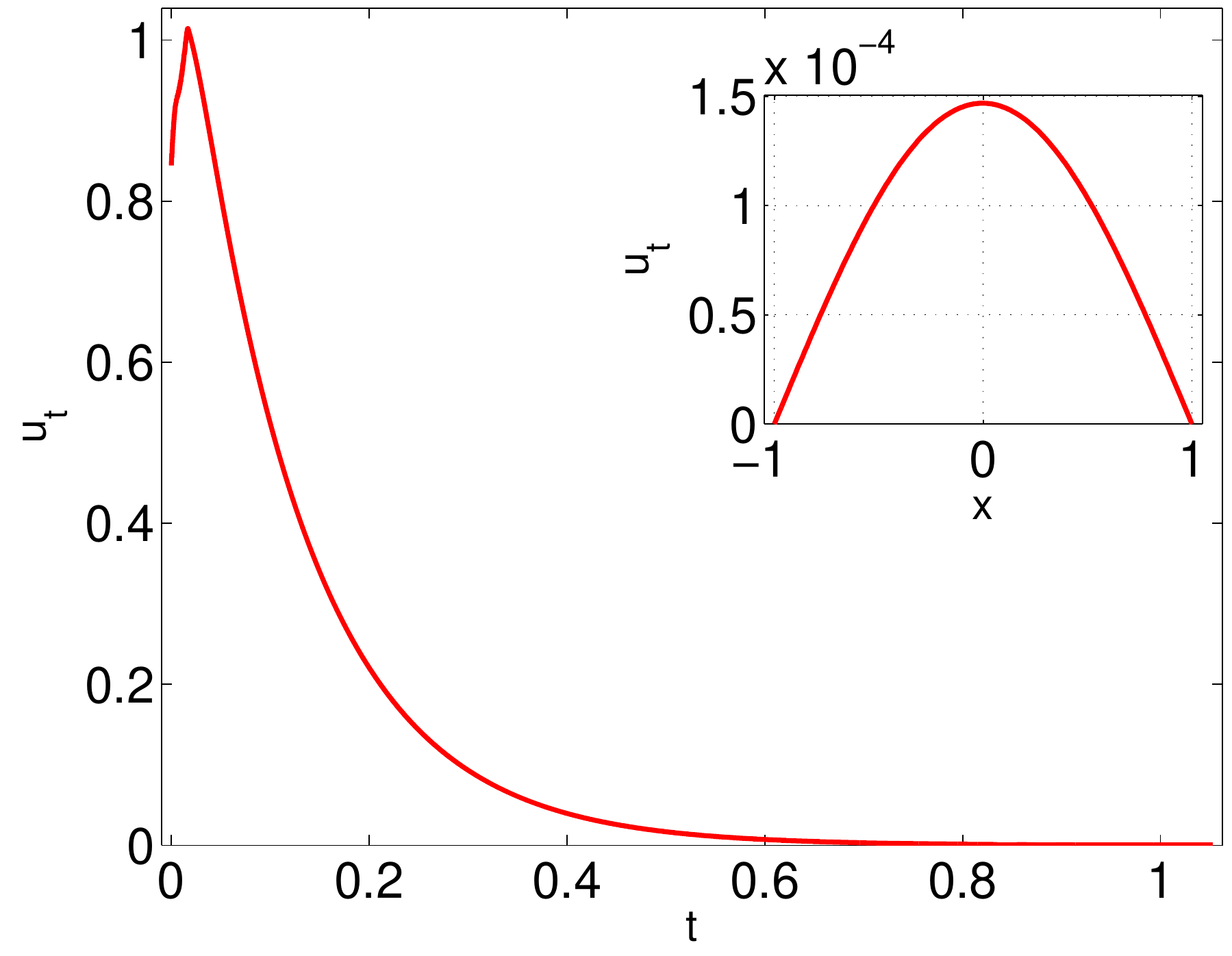,width=2.3in,height=1.28in}}

\parbox[t]{12.8cm}{\scriptsize{\bf Figure 4.} Curves in the main frames are for profiles of the maximal values of $u$ [LEFT] 
and $u_t$ (RIGHT). That is, they are plots of $\textstyle\max_{-1\leq x\leq 1} u,~0\le t\le T,$ and 
$\textstyle\max_{-1\leq x\leq 1}u_t,~0<\epsilon\le t\le T,$ respectively.
The embedded graphics represent profiles of $u$ [LEFT] and $u_t$ [RIGHT] at time $t=T.$ Up to 200,000 temporal steps 
have been executed. Temporal adaptation is never activated in this circumstance since no quenching is generated.}
\end{center}

Let $T=1.0479.$ In Fig. 2, the left and right graphics show the two-dimensional heat flow of $u$ and its velocity, $u_t,$ respectively.
It can be observed in the former that the heat flows smoothly and monotonically from the left to right. However, the multiple-peak
maximal values quickly merge into one and the flow tends to be steady as time increases. 
Temporal locations of the slices in Fig. 3 are chosen by evenly dividing the arc-length 
of the function $L(t) = \textstyle\max_{-1\leq x\leq 1}u(x,t).$ These plots again indicate that 
the solution $u$ tends to be steady with limited changes, while the rate of change function quickly tapers and
then diminishes when $t$ tends to $T.$ Both the numerical solution $u$ and temporal derivative $u_t$ 
preserve symmetry about $x=0$ as expected. The temporal adaptation is never activated as the maximum of the solution 
stays far below unity. The experimental results presented are consistent with existing results \cite{Levine,Sheng4,Sheng3}.

It is noticed in this experiment that $u$ exists globally. More detailed profiles of the maximal values of $u$ 
and $u_t$ at different time levels are shown in Fig. 4 for $0\leq t\leq T.$
Embedded figures are for $u$ and $u_t$ in terminal positions at $t=T.$
It is again observed that while the solution $u$ increases monotonically, the temporal derivative function $u_t$ decreases 
after some initial disturbances. In the terminal position $T_0=1.052907287028235,$ we have
$$\max_{-1\leq x\leq 1} u(x,T_0)\approx   0.141813667464453,~~\max_{-1\leq x\leq 1} u_t(x,T_0) \approx    1.468923350820044\times 10^{-4}.$$

In the next experiment, we choose $a=2$ in order to witness a quenching case for which the physical solution should 
exist only for finite time. We show three-dimensional profiles of the numerical solution $u$ and its temporal derivative 
$u_t$ in Fig. 5 for $0\leq t\leq T^*$ and $T^0\leq t\leq T^*,$ where $T^0 = 0.509286490538884,$
respectively. A quenching time $T^*\approx 0.509391490538887$ is recorded. The sole purpose of using the
temporal interval $[T^0,T^*]$ is to witness the explosive feature of $u_t$ immediately prior to quenching. This corresponds to 
the last 105 time steps in computations. The curvature functions are again selected 
via the arc-length of the maximal value function of $u.$ It is evident that while $u$ quenches peacefully as
$t\rightarrow T^*,$ the function $u_t$ blows up simultaneously. 

\begin{center}
{\epsfig{file=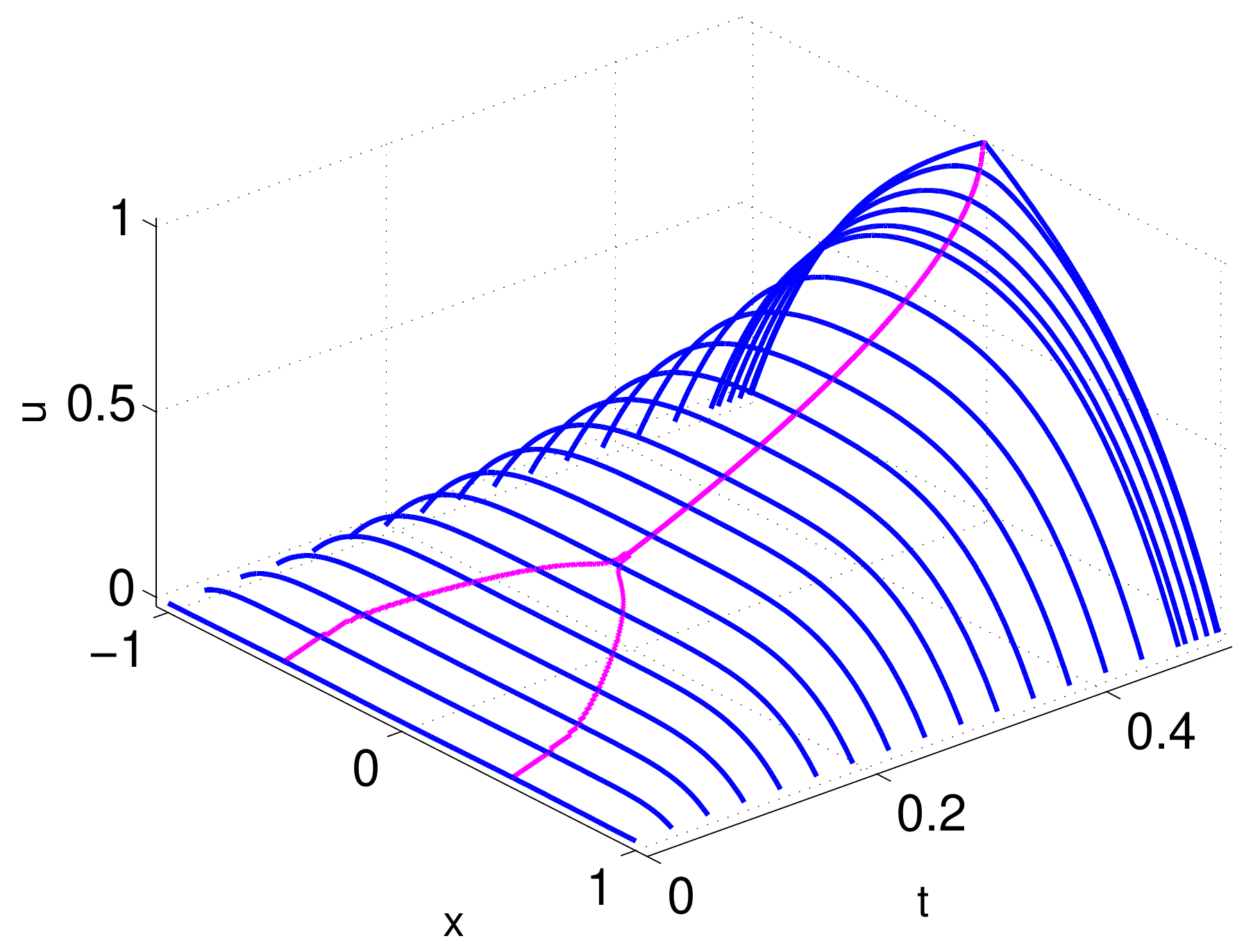,width=2.5in,height=1.38in}}~~
{\epsfig{file=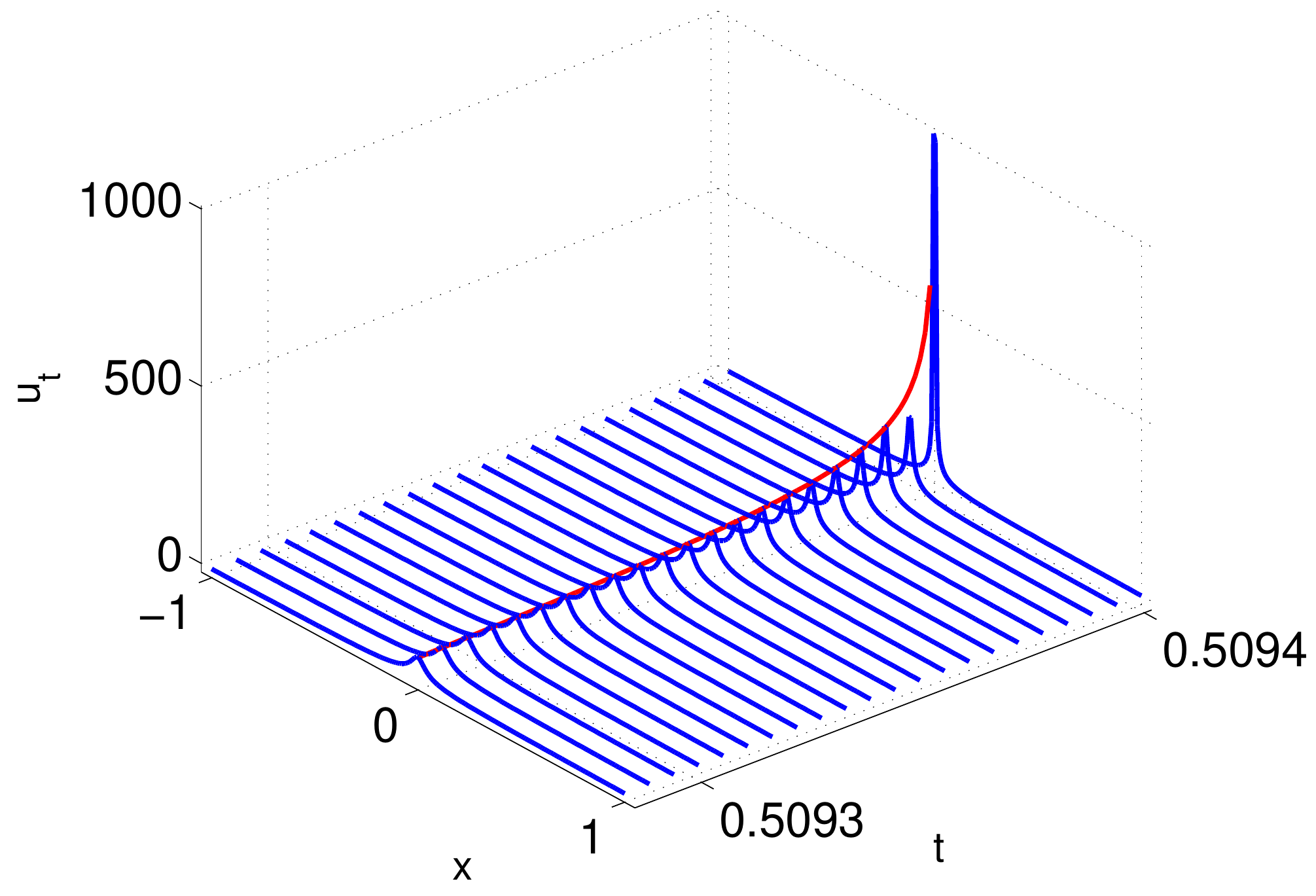,width=2.5in,height=1.38in}}

\parbox[t]{12.8cm}{\scriptsize{\bf Figure 5.} Three-dimensional curvature views of $u,~0\leq t\leq T^*,$ [LEFT] 
and $u_t,~T^0\leq t\leq T^*,$ [RIGHT] where $T^0=0.509286490538884$ and $T^*=0.509391490538887$
are used. The magenta and red curves represent functions $\textstyle\max_{-1\le x\le 1} u$ and $\textstyle\max_{-1\le x\le 1} u_t,$ 
respectively. The temporal derivative values concentrates about the quenching point 
with $\textstyle\max_{-1\le x\le 1} u_t(x,T^*) > 985\gg 1.$}
\end{center}

\begin{center}
{\epsfig{file=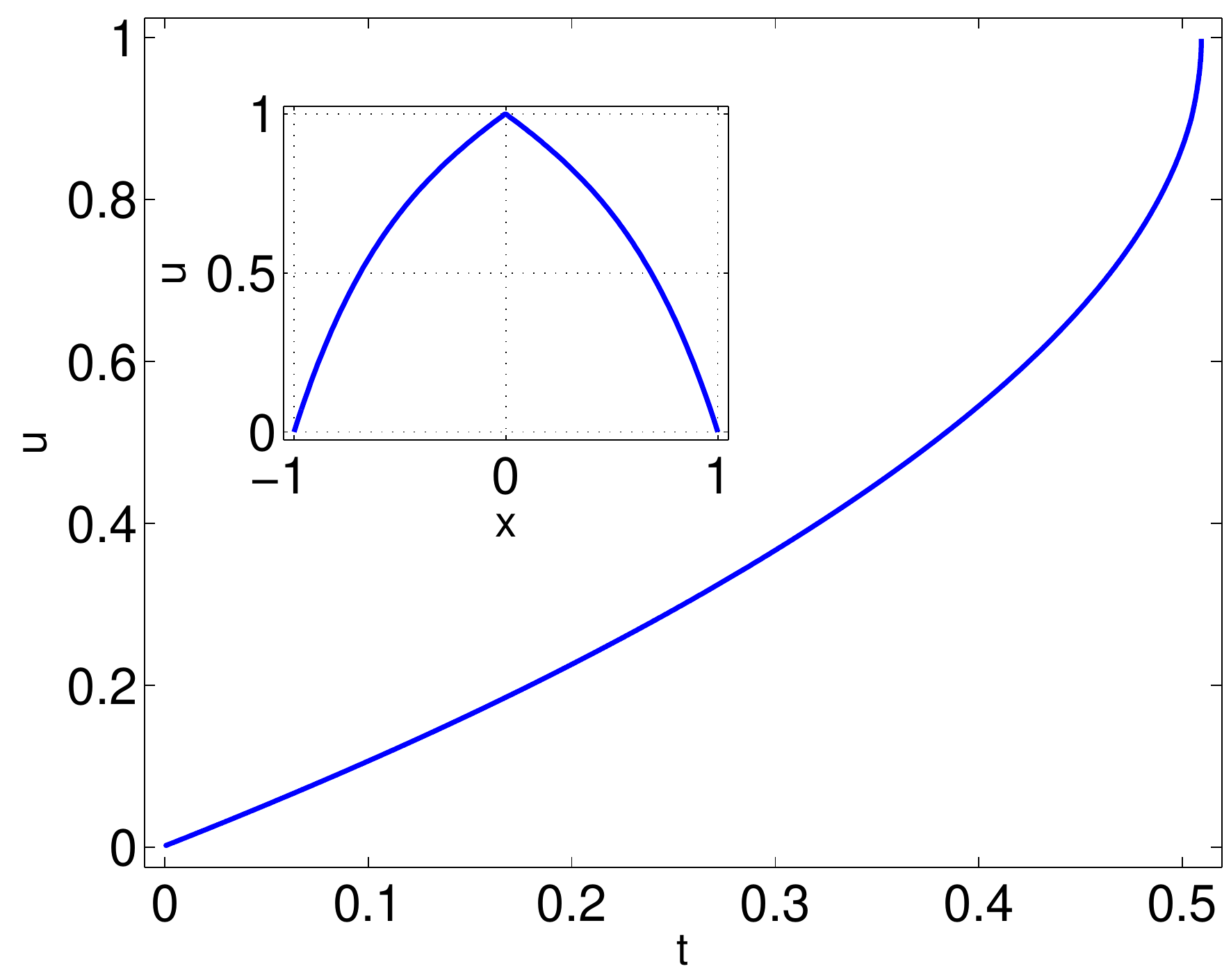,width=2.3in,height=1.28in}}~~
{\epsfig{file=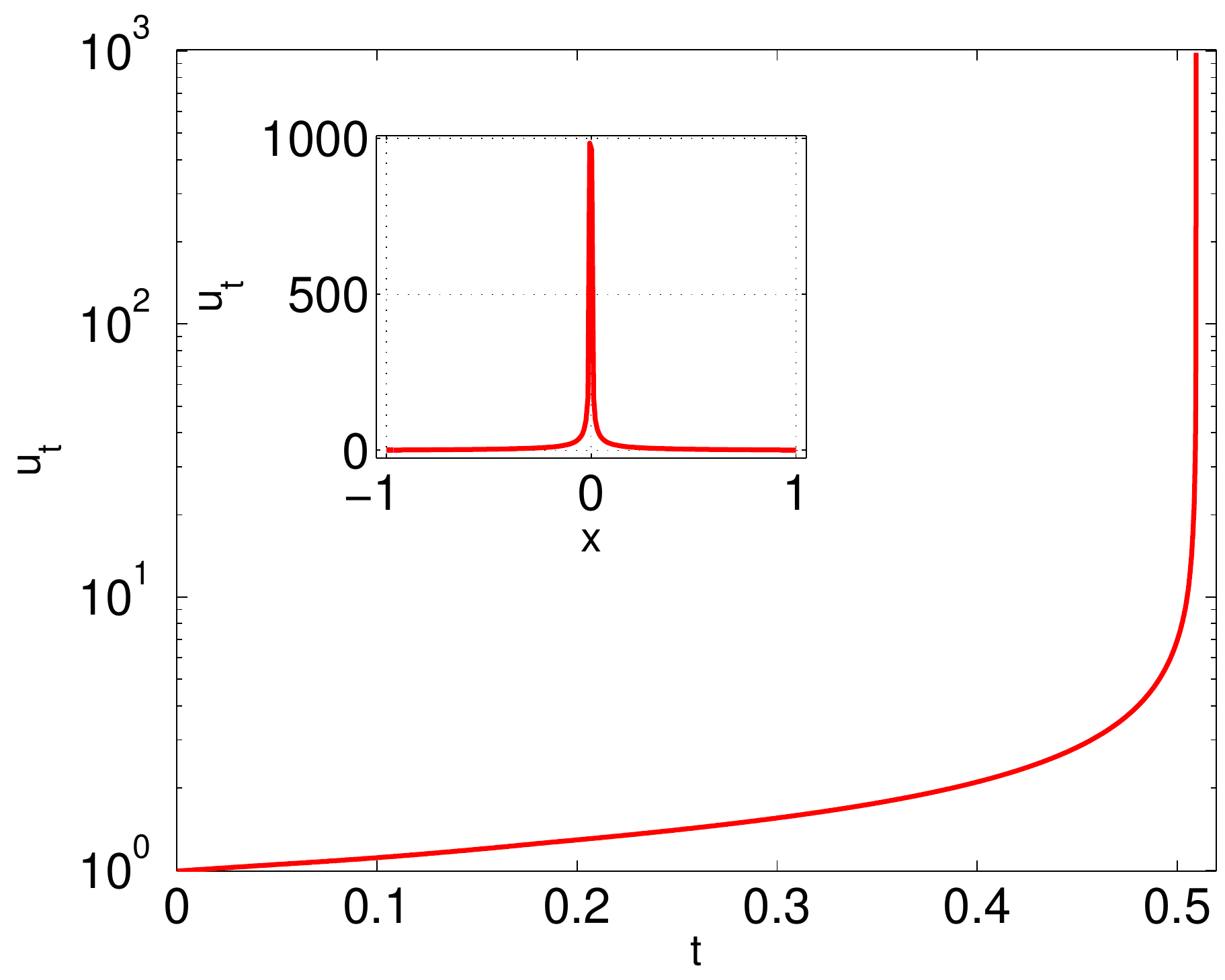,width=2.3in,height=1.28in}}

\parbox[t]{12.8cm}{\scriptsize{\bf Figure 6.} Maximal value profiles of the numerical solution $u$ [LEFT] and $u_t$ [RIGHT]. 
The mainframe curves are for $\textstyle\max_{-1\le x\le 1}u$ and $\textstyle\max_{-1\le x\le 1}u_t$ for $0\le t\le 0.509391490538887.$ 
The embedded graphics represent profiles of $u$ and $u_t$ immediately prior to quenching. The quenching 
time is observed to be $T^{*}\approx 0.509391490538887.$}
\end{center}

More details of the maximal value profiles of $u$ and $u_t$ can be found in Fig. 6. For additional information, we embed the 
terminal solution $u$ and corresponding derivative $u_t$ into the main frames. A logarithmic scale is used for $u_t$ 
in order to provide a better illustration of the explosive feature of the derivative function. The temporal adaptation is triggered 
automatically once $\textstyle\max_{-1\le x\le 1} u\approx 0.90,$ and remains 
activated throughout the remainder of computations. 

\begin{center}
{\epsfig{file=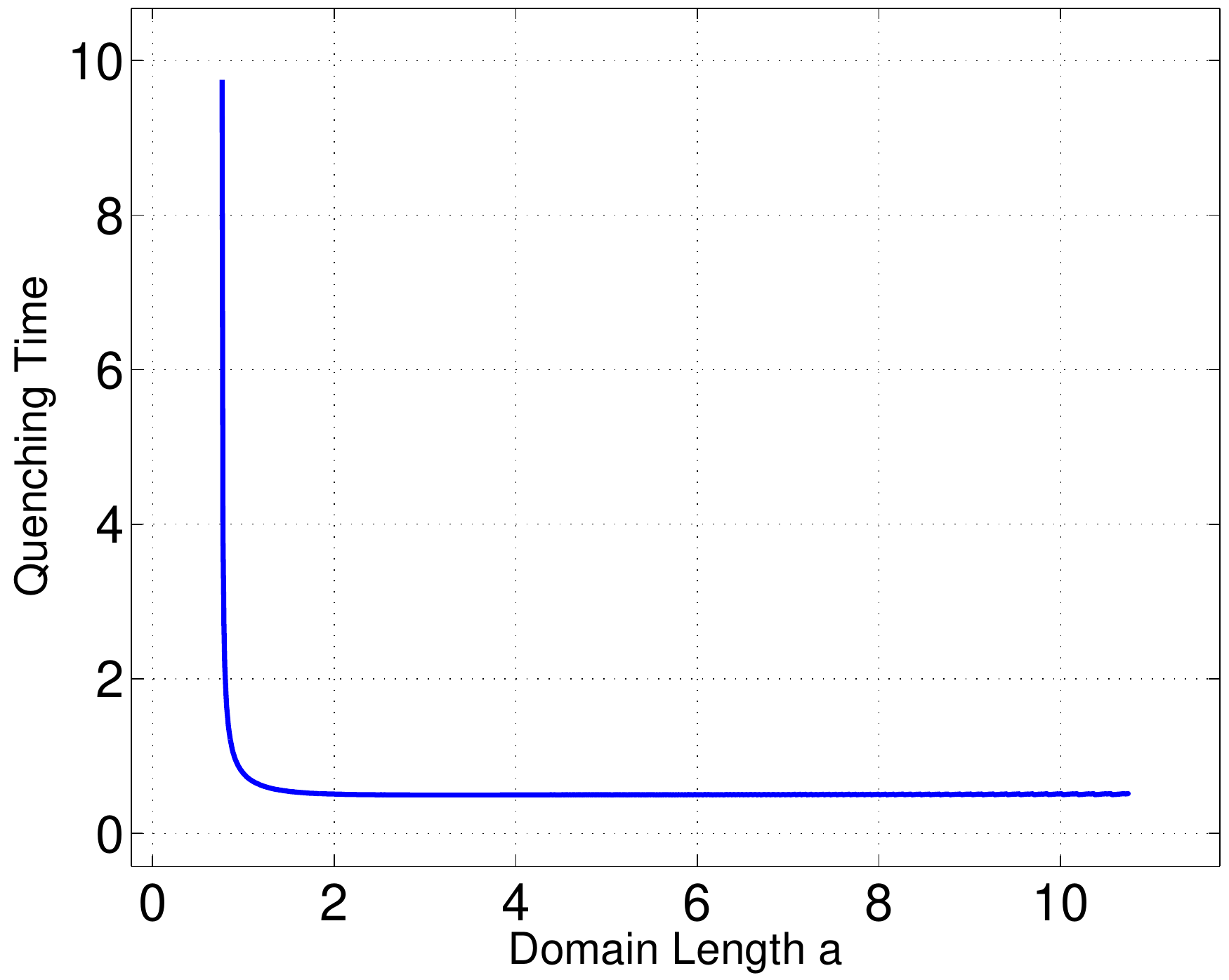,width=2.3in,height=1.28in}}~~
{\epsfig{file=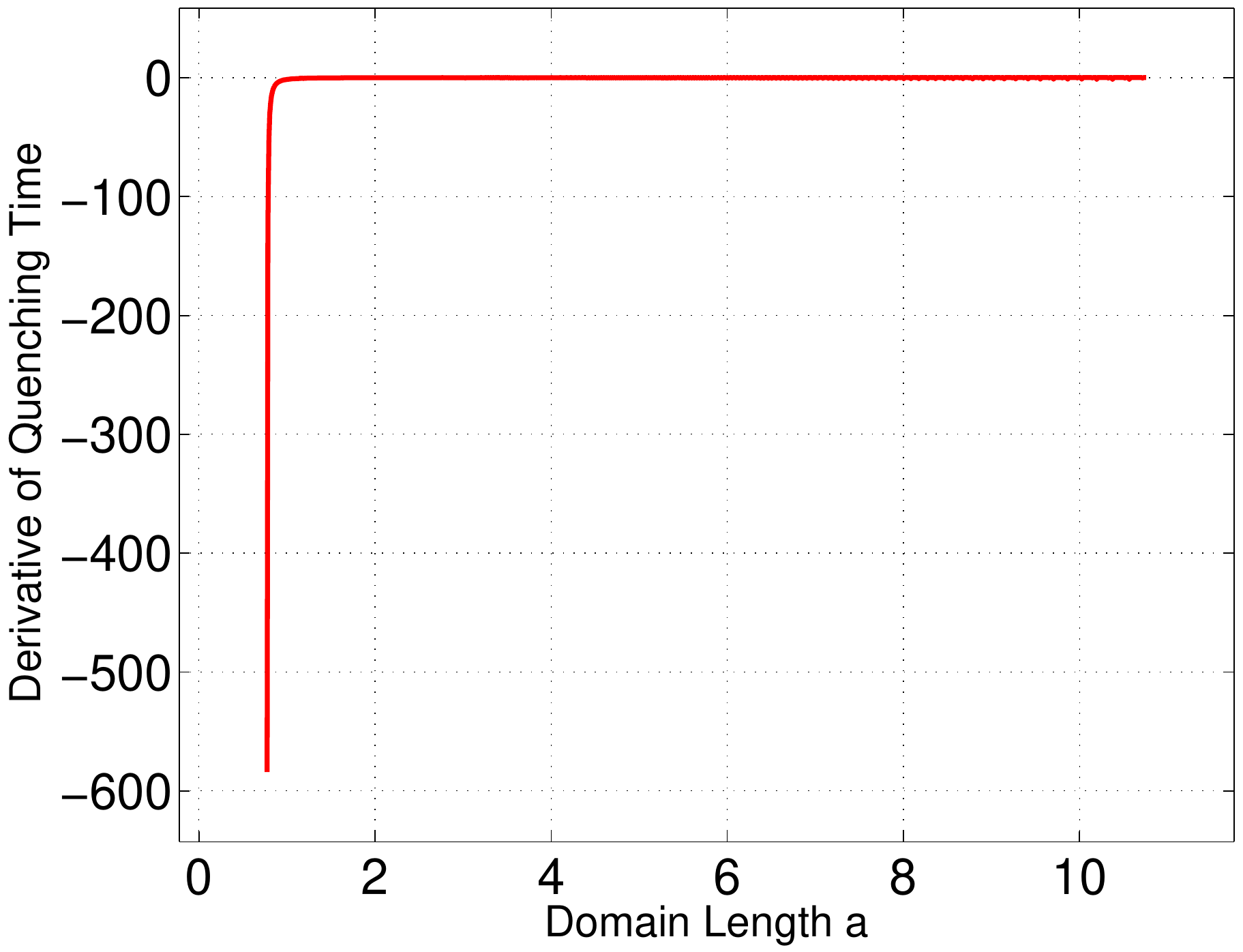,width=2.3in,height=1.28in}}

\parbox[t]{12.8cm}{\scriptsize{\bf Figure 7.} Illustrations of the dynamic connection between quenching times and domain sizes 
$a$ [LEFT], and the rate of change of the quenching time with respect to $a$ [RIGHT]. 
As $a$ increases beyond $a_1,$  quenching time declines rapidly until its minimal value 
$T^*_{a_{307}}\approx 0.499360935318447,$ where $a_{307}=3.8321581.$
Once $a$ increases beyond $a_{307}$ the quenching time increases in a slightly oscillatory manner to reach
 $T^*_{a_{1000}}.$ }
\end{center}

Fig. 7 shows the effect of domain size on quenching time. The numerical solutions based on 1000 different values of 
$a\in [0.7652281, 10.7552281]$ are computed, compared, and analyzed. The reason for choosing $a_1=0.7652281$
is that it is slightly larger than the theoretical critical size of $a^*\approx 0.765228037955310$ \cite{Chan2,Sheng4}. 
The experiments indicate that the quenching time is longer when $a$ is close to $a_1,$ with a maximum 
$T_{a_1}^*=9.752350010587456.$ The quenching time then sharply decreases. These results firmly support theoretical expectations that the quenching time should approach infinity as the domain size decreases to $a^*.$ 
We also observe that the quenching time seems to have a lower threshold. Although the data acquired exhibit slight oscillations, 
{\red the vibrations should not be caused by round-off errors, since various error-reduction measurements are utilized in 
our calculations. For example, we let the temporal step 
minimum $\tau_{\min} = c\times10^{-6},$ where $c>0$ is an usual floating point number.  When a 
temporal derivative value is evaluated, we reformulate the original formula through modifications such as
$$v_{\ell}'\approx \frac{10^6(v_{\ell+1}-v_\ell)}{10^6 \tau_{min}} = \frac{v_{\ell+1}-v_\ell}{c}\times 10^6$$
which effectively reduces the risk of unfavorable round-off errors.} Consequently, we obtain a final value of $T^*_{a_{1000}}\approx 0.515984311015508.$

\subsection*{Experiment 2}

Let us consider $a=2,~\varphi(\epsilon)\equiv 1$ and utilize the same initial function $u_0(x)=0.001(1-\cos(2\pi x)).$ 
Set our degenerate function $\sigma(x) = (x+1)^p(1-x)^{1-p},~-1\le x\le 1,~0\le p\le 1.$ 
Note that $\sigma(x)$ creates a degeneracy near each of the spatial boundaries of the problem \R{num1}-\R{num3}. 
We commence by using the golden ratio $p=(\sqrt{5}-1)/2.$ 

\begin{center}
{\epsfig{file=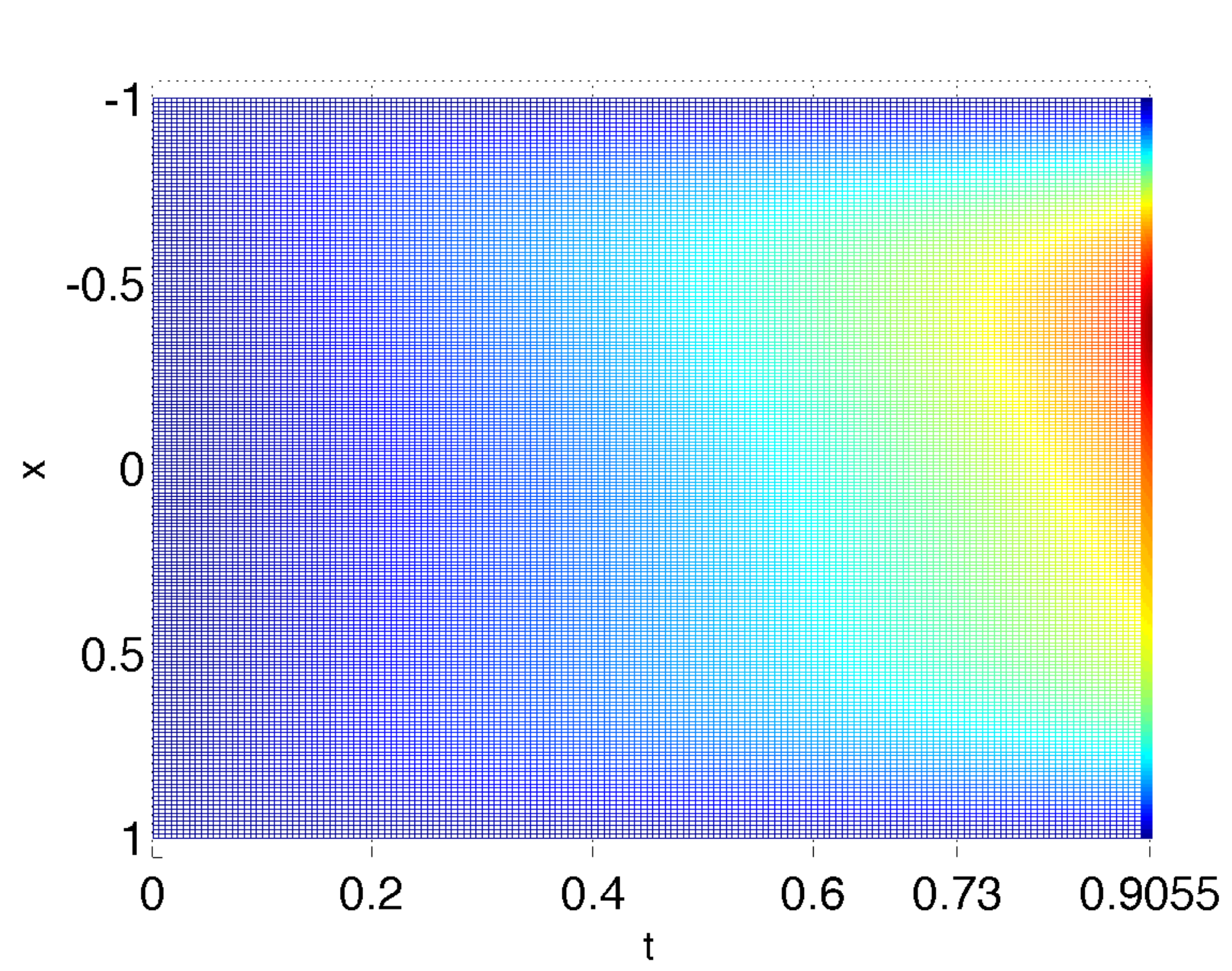,width=2.3in,height=1.28in}}~~
{\epsfig{file=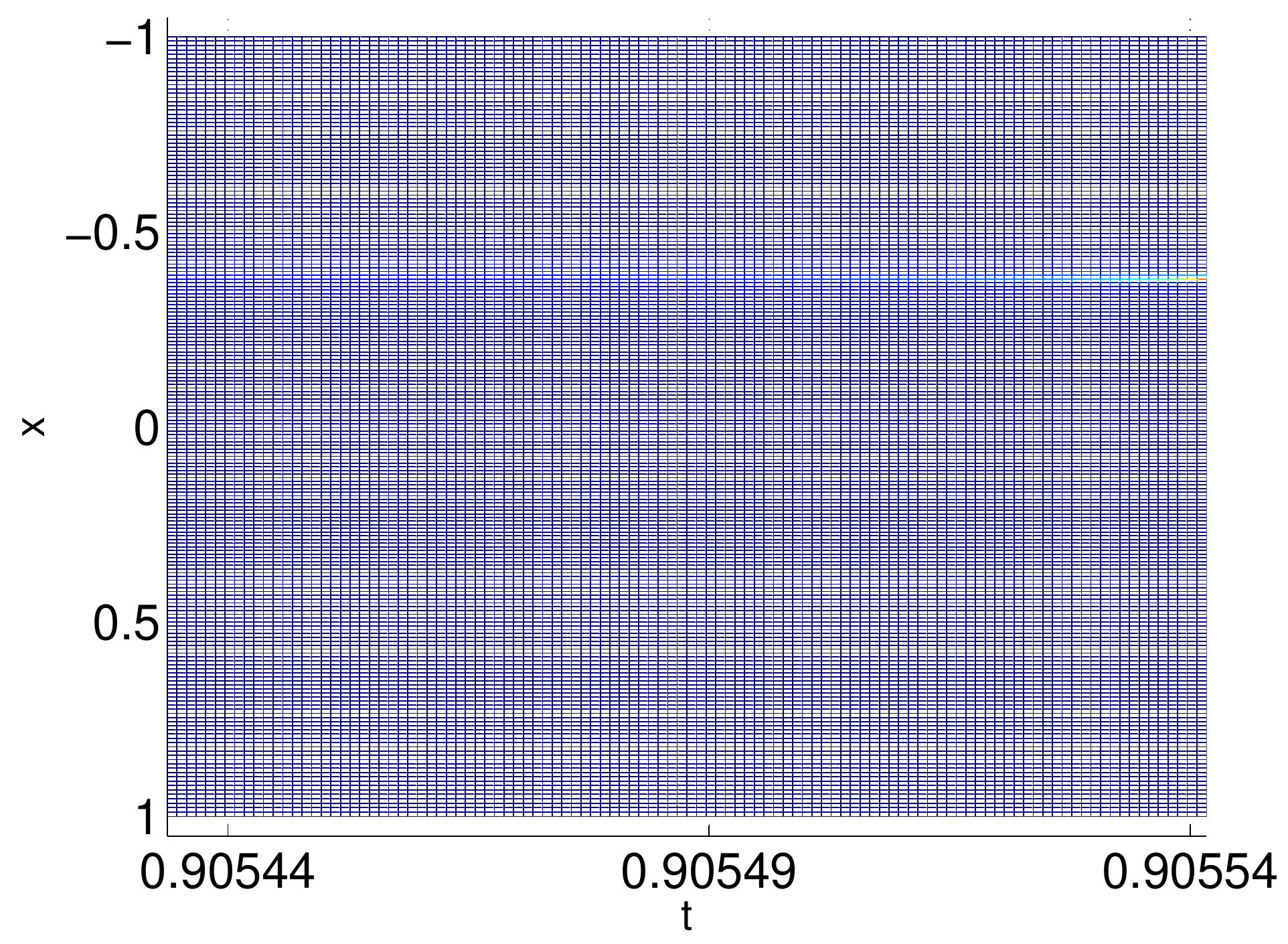,width=2.3in,height=1.18in}}

\parbox[t]{12.8cm}{\scriptsize{\bf Figure 8.} Two-dimensional thermal flows of the numerical solution $u$ for $t\in[0,T]$ [LEFT], 
and $u_t$ for $t\in [0.905433681825884, 0.905541681825887]$ [RIGHT]. In the former case, the heat flows smoothly and monotonically increases
until quenching at $P=(-0.378707538403295, 0.905541681825887).$ Numerical solutions
in last 105 temporal steps immediately before quenching are used for estimating $u_t.$}
\end{center}

\begin{center}
{\epsfig{file=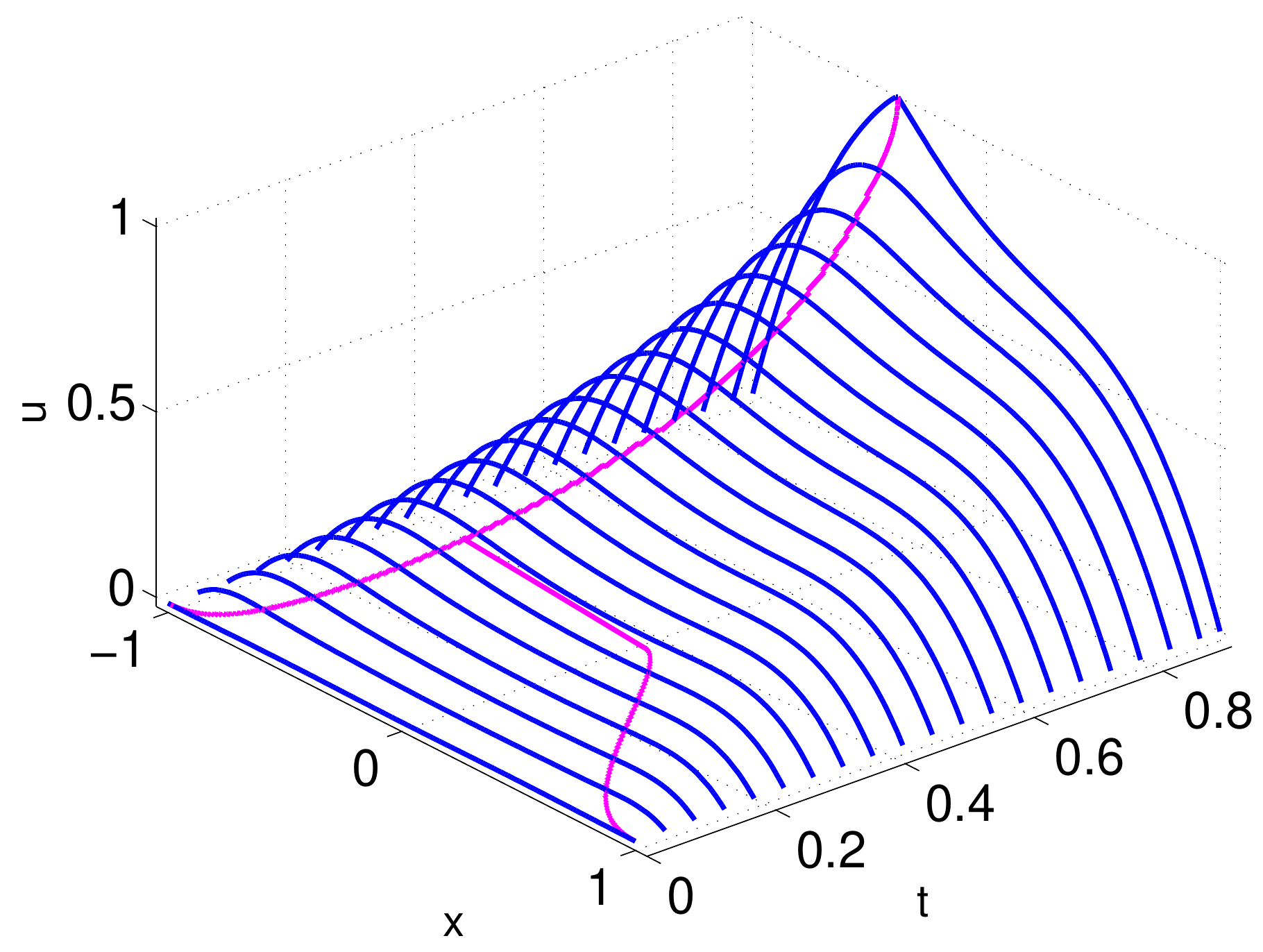,width=2.5in,height=1.38in}}~~
{\epsfig{file=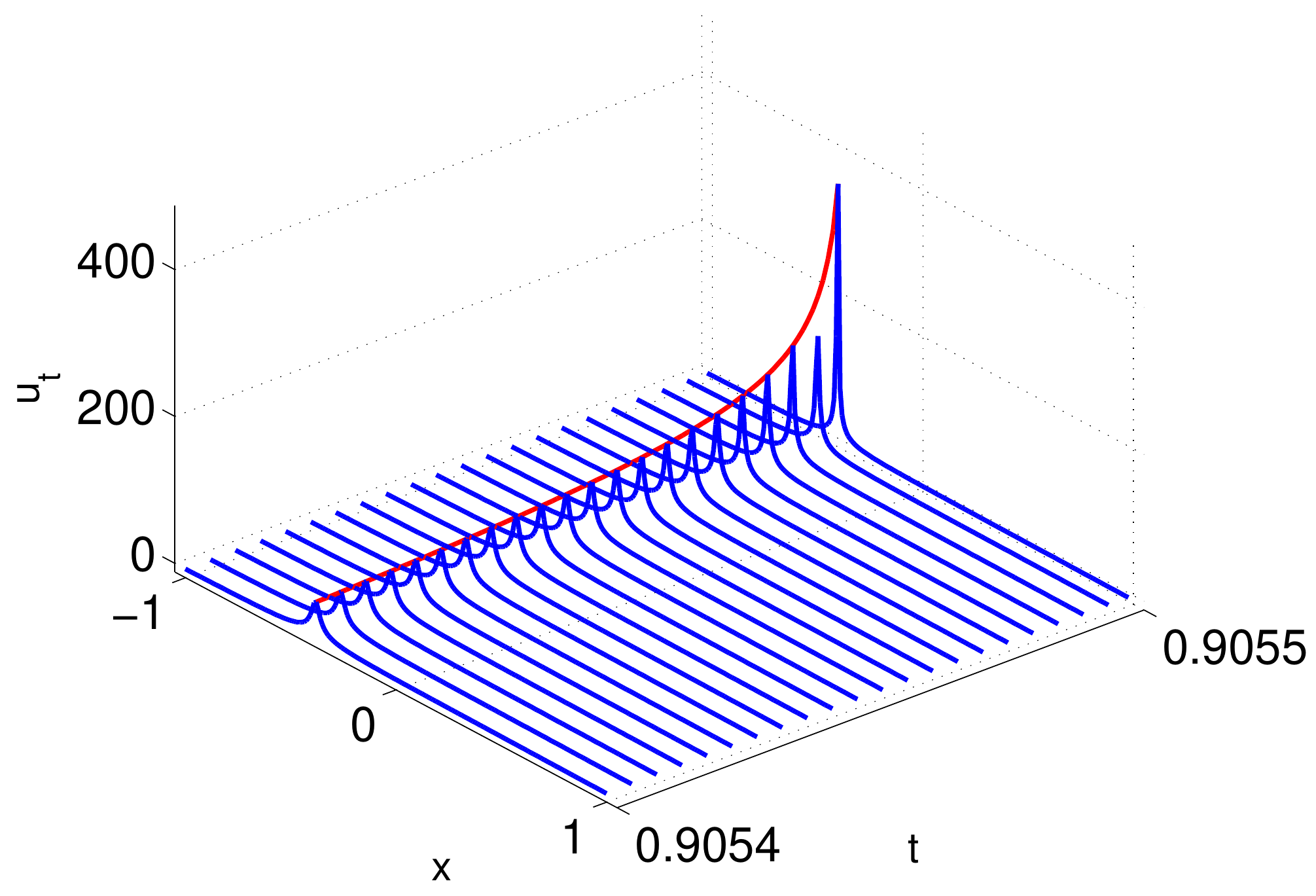,width=2.5in,height=1.38in}}

\parbox[t]{12.8cm}{\scriptsize{\bf Figure 9.} Three-dimensional views of $u$ [LEFT] and $u_t$ [RIGHT]. 
The blue three-dimensional curvature plots represent profiles of $u$ and $u_t$ at different times, respectively. While the magenta curves indicate 
$\textstyle\max_{-1\le x\le 1} u,$ the red curves are for $\textstyle\max_{-1\le x\le 1} u_t.$ }
\end{center}

We may observe in Fig. 8 and 9 how the maximal values 
have shifted away from the center due to the degeneracy. 
The velocity map of $u_t$ matches that of $u$ in the thermal plots.
To view more clearly the explosive profile of $u_t,$ a time interval 
$[0.905433681825884, 0.905541681825887]$ is used in the second frames. Note that
the temporal derivative $u_t$ reaches its maximum at the 
quenching point with $\textstyle\max_{x,t} u_t \approx 475.5902863402550.$ Further, in Fig. 9 and 10, we can observe that
there is a remarkable shift of the local maximal values to the left of the origin when approaching quenching,
as compared to cases without a degeneracy.  The temporal adaptation is activated once 
$\textstyle\max_{-1\le x\le 1} u \approx 0.90$ and remains active throughout the remainder of the computations.

\begin{center}
{\epsfig{file=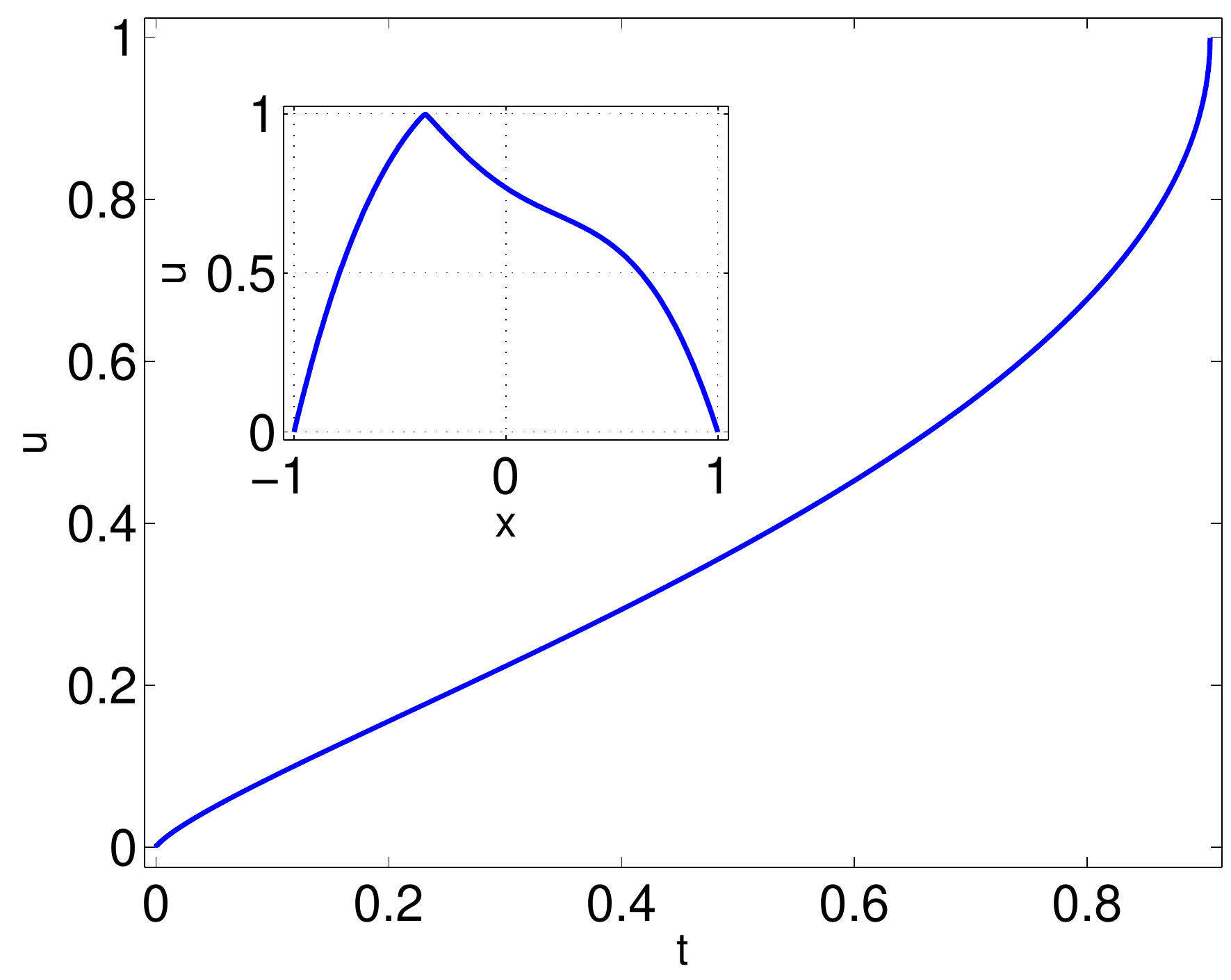,width=2.3in,height=1.28in}}~~
{\epsfig{file=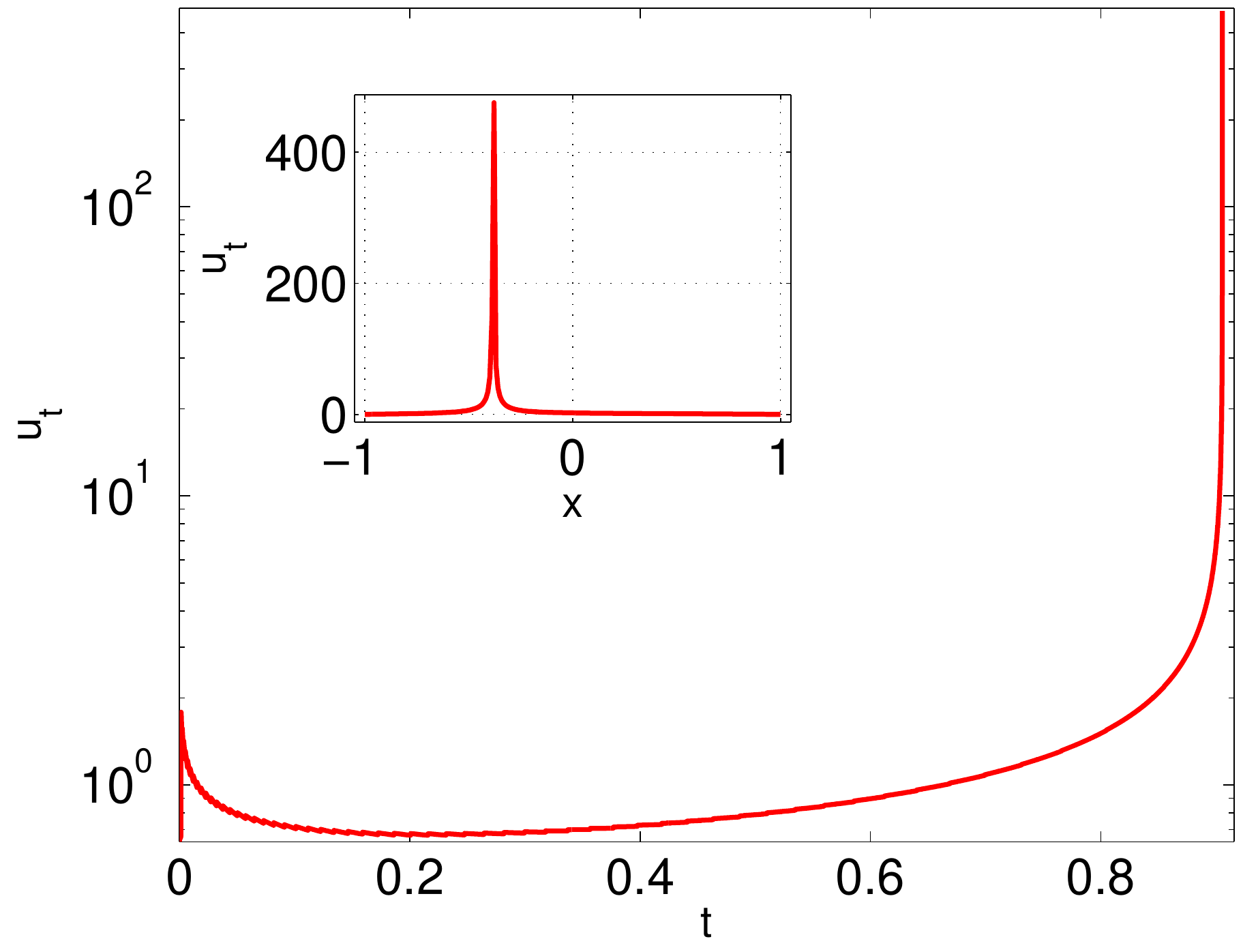,width=2.3in,height=1.28in}}

\parbox[t]{12.8cm}{\scriptsize{\bf Figure 10.} Profiles of the maximal values of the numerical solution $u$ [LEFT] and its derivative $u_t$ [RIGHT]. 
The two embedded pictures are for $u$ and $u_t$ in the last position immediately before quenching.  
The quenching location is $x^*=-0.378707538403295$ and the quenching time is $T^{*}\approx 0.905541681825887$ in the experiments which
are well-agreeable with known results \cite{Chan2,Acker2,Sheng4,Sheng3}.}
\end{center}

\begin{center}
{\epsfig{file=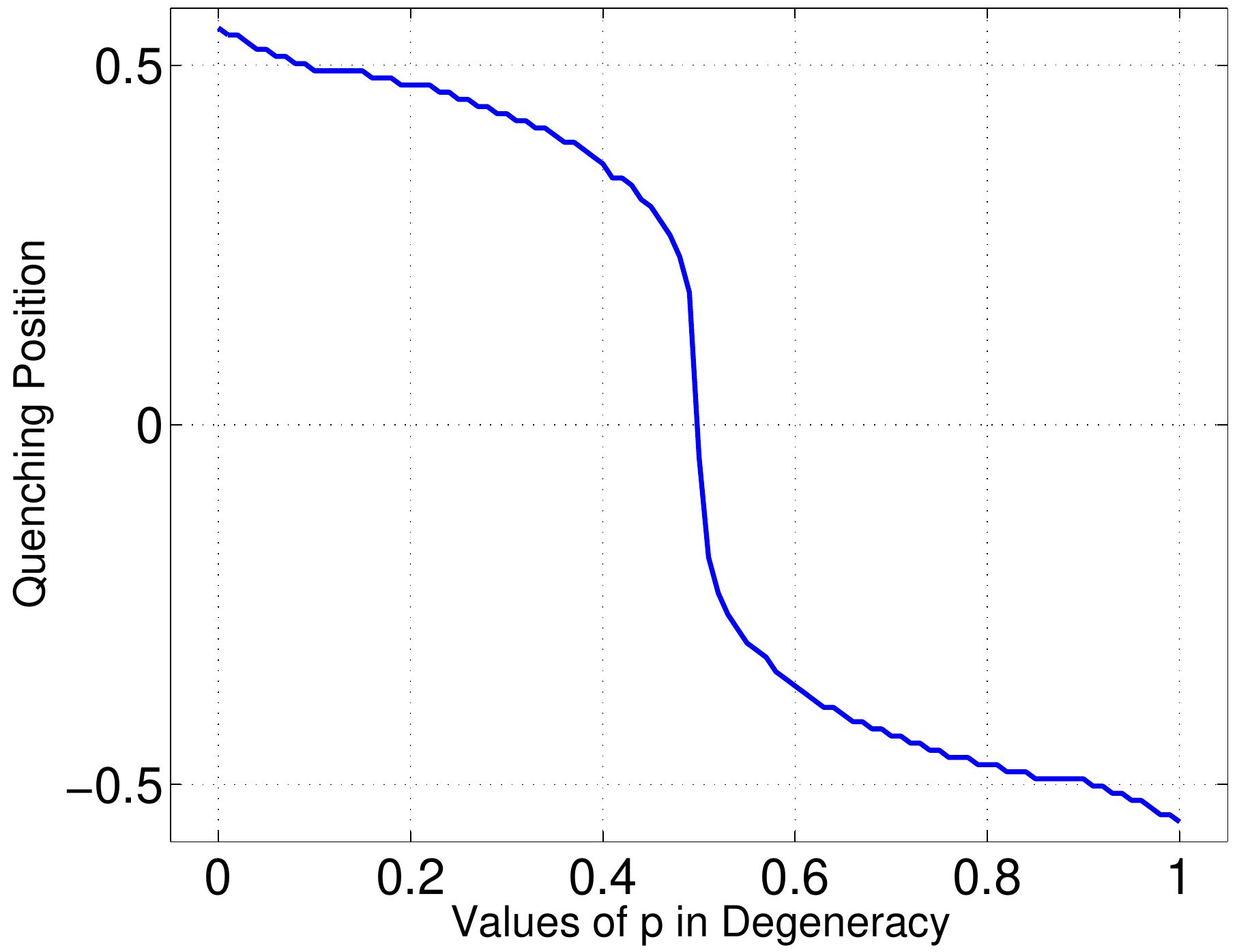,width=2.3in,height=1.28in}}~~
{\epsfig{file=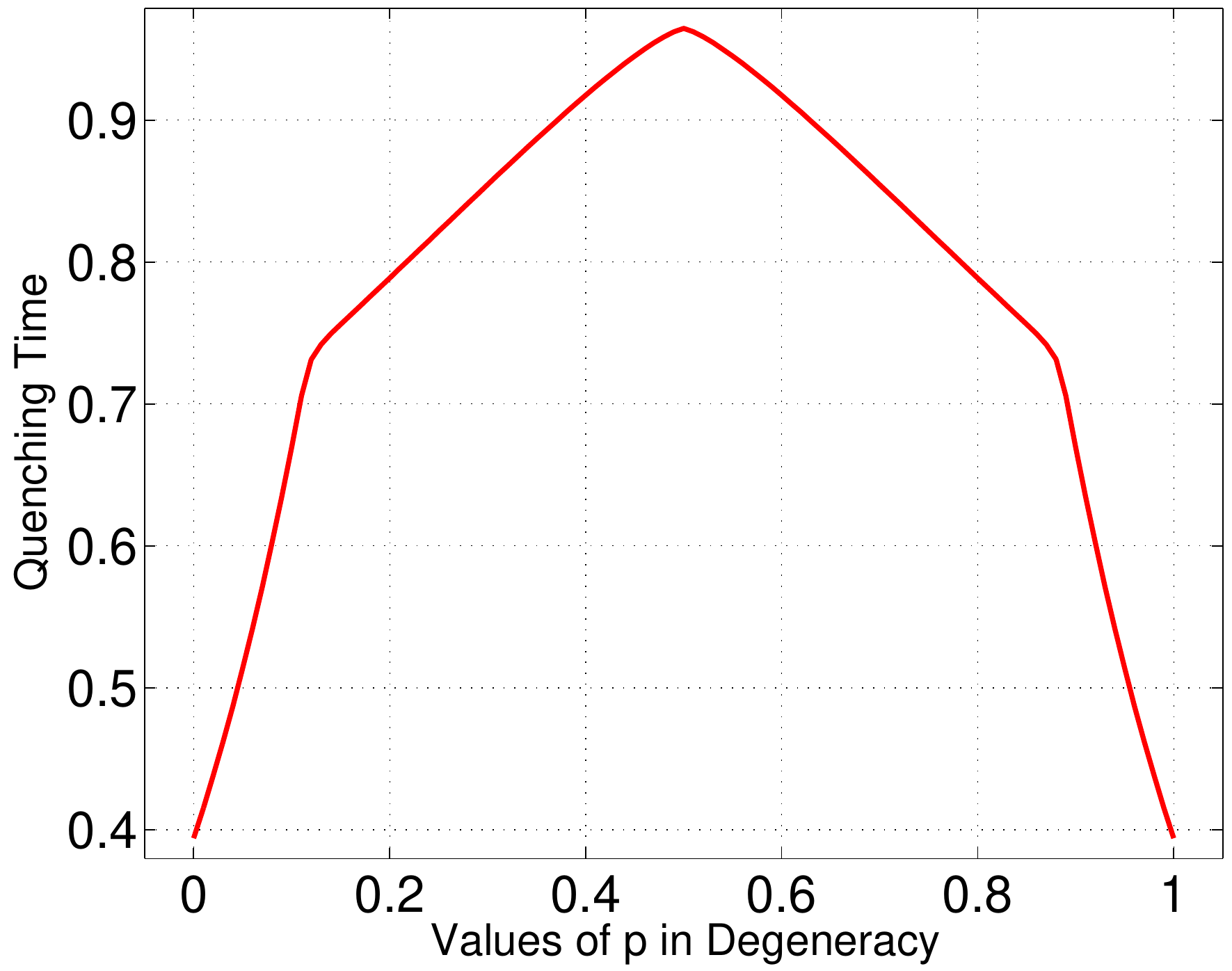,width=2.3in,height=1.28in}}

\parbox[t]{12.8cm}{\scriptsize{\bf Figure 11.} The left figure displays the relationship between quenching position and values of $p$ in the degeneracy function 
$\sigma(x).$ The right picture, on the other hand, shows the connection between the quenching time and values of $p$ in $\sigma(x).$ It is known
that the quenching position and quenching time distribution should be symmetric about the value $p=0.5$
\cite{Bebernes_89,Beau1,Sheng3}. }
\end{center}

Fig. 11 is designed to show possible relations between the quenching location and $p,$ as well as the connections 
between the quenching time and
$p.$ Values of $p$ are specified through the given degeneracy function $\sigma(x).$ 

It is found in our numerical experiments that the quenching position decays monotonically as $p$ increases. Extreme values, $x^*=
-0.552238805970151,~0.552238805970147,$ are taken as $p=0,~1,$ respectively. As expected, the quenching location is at
the origin when $p=1/2.$ However, the decay of quenching location function is apparently nonlinear and exhibits a pattern of antisymmetry. 
It seems that the impact of $p$ on quenching locations is relatively more significant for $p\in[0.4, 0.6].$

Further, for the range of $p$ values used, we may observe that the minimal quenching time, 
$T^*\approx 0.394063444318618,$ occurs as $p$ approaches either $0^+$ or $1^-.$ On the other hand, the maximum quenching time, 
$T^*\approx 0.964575637131343$ can be witnessed at $p=0.5.$ The quenching time function is symmetric about $p=0.5$ but again
nonlinear. The impact of $p$ on the quenching time is relatively more pronounced when $p$ is closer to the end of its defined interval, that is, when
$p\in [0,0.12]$ or $p\in [0.88, 1].$

\subsection*{Experiment 3}

We proceed with $a=2$ and $\sigma(x)\equiv 1$ in this particular exploration. In order to study the effects of the stochastic influence we consider 
the function $\varphi(\epsilon)=\epsilon^2,~0.01\le\epsilon\le 1.$ In this example we explore the numerical solutions resulting from 
two different white noise vectors $\epsilon(x).$ The purpose of these considerations is to investigate how slight changes in the 
vector $\epsilon(x)$ can result in drastically different solutions profiles.

\begin{center}
{\epsfig{file=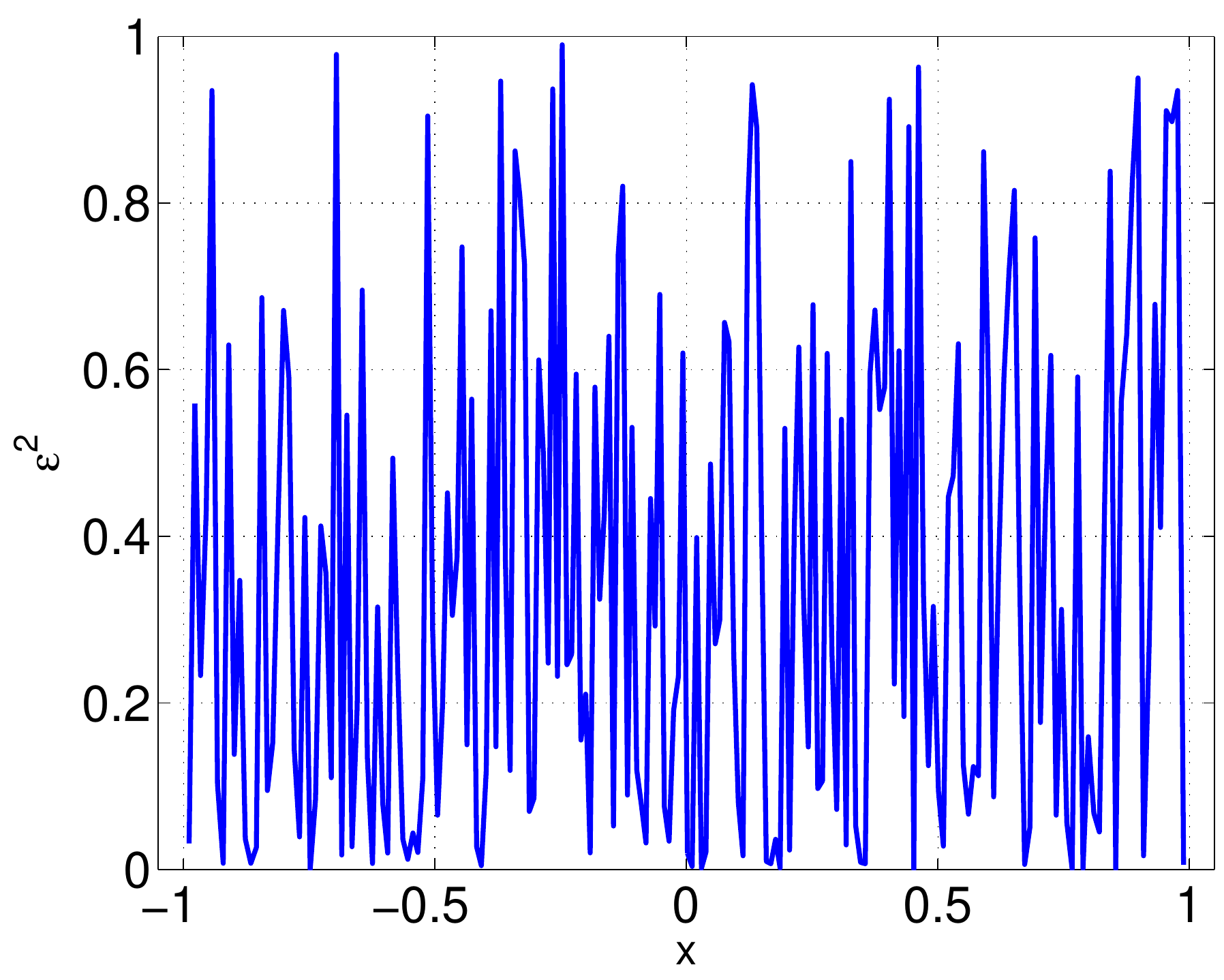,width=2.3in,height=1.28in}}
~~
{\epsfig{file=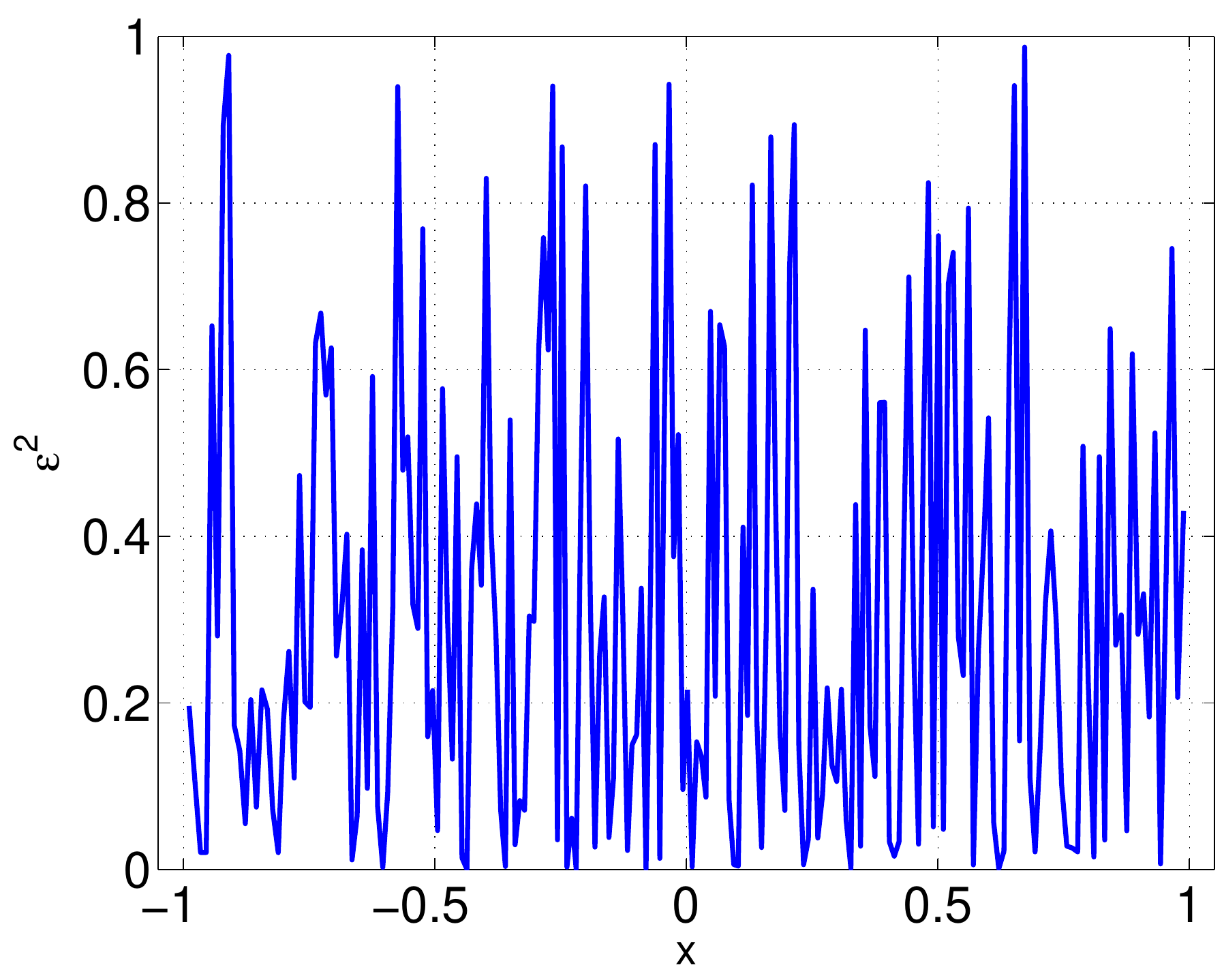,width=2.3in,height=1.28in}}
~~
\parbox[t]{12.8cm}{\scriptsize{\bf Figure 12.} Plots of two different stochastic function values corresponding to 
$\varphi(\epsilon)=\epsilon^2$ are shown. Herewith we have $0.01\le\epsilon\le 1.$ }
\end{center}

\begin{center}
{\epsfig{file=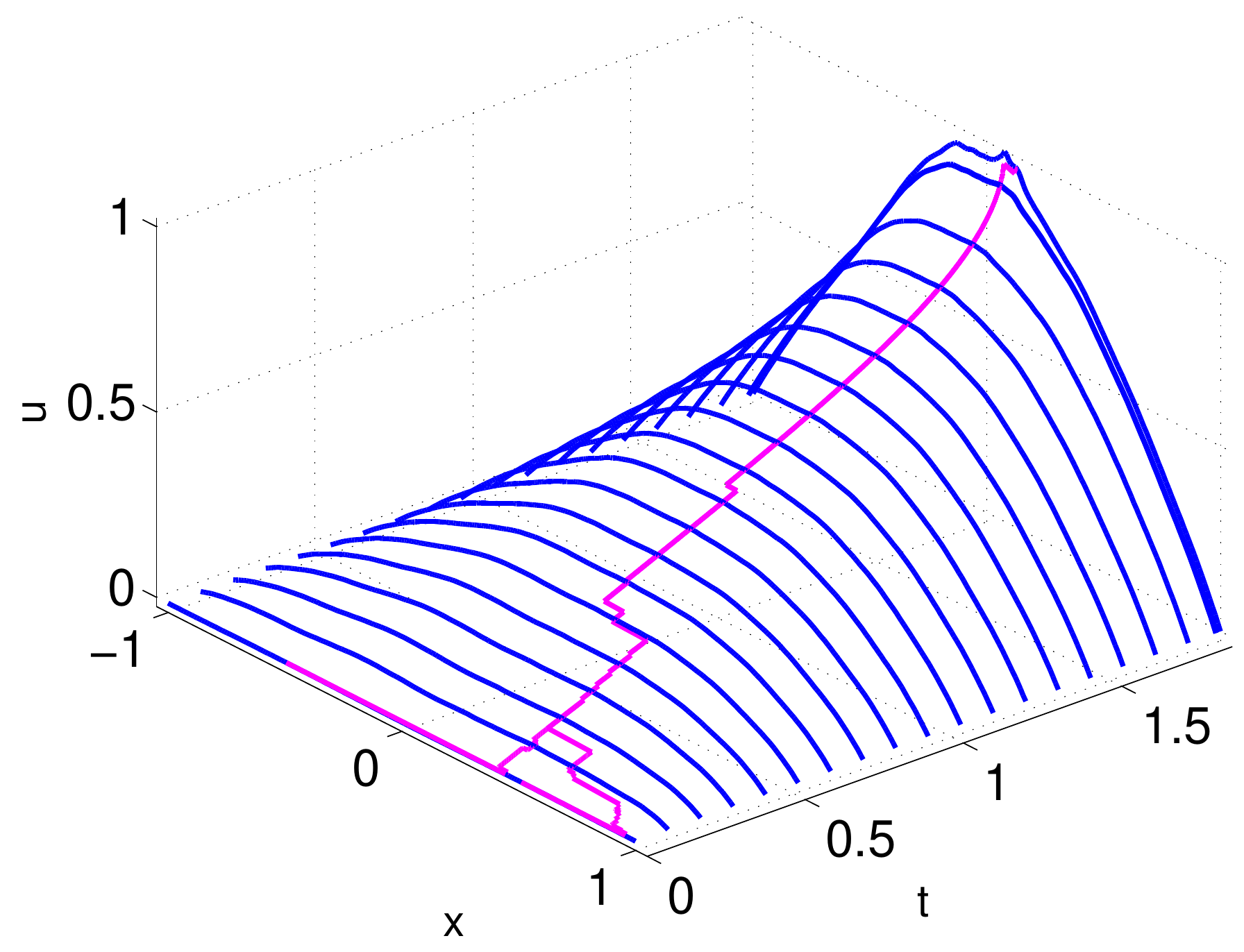,width=2.5in,height=1.38in}}
~~
{\epsfig{file=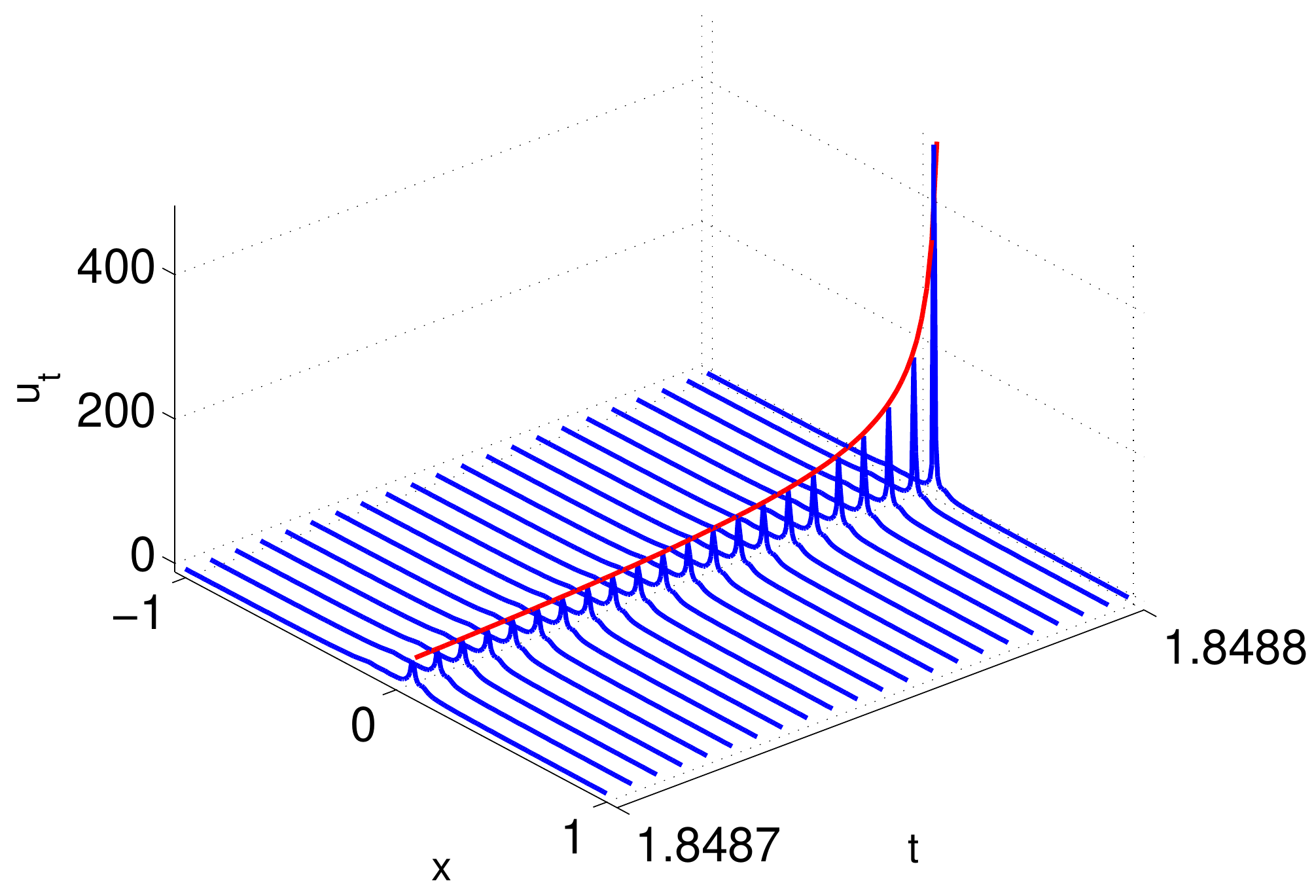,width=2.5in,height=1.38in}}
~~
\parbox[t]{12.8cm}{\scriptsize{\bf Figure 13.} Three-dimensional curvature views of $u$ [LEFT] and $u_t$ [RIGHT] 
corresponding to the first set of random values. The blue lines represent profiles of $u$ 
and $u_t$ at different times, while the magenta and red lines represent $\textstyle\max_{-1\le x\le 1} u$ and 
$\textstyle\max_{-1\le x\le 1} u_t,$ respectively. The temporal derivative has its largest values concentrated about the 
quenching point with $\max u_t \approx 484.1416720746672.$}
\end{center}

The two different random variable function outputs used are displayed in Fig. 12. Note that the 
vector $\epsilon(x)=\l(\epsilon_1,\dots,\epsilon_N\r)^{\tT}$ is generated randomly, with each component 
consisting of a uniformly distributed random number $\epsilon_i\in[0.01,1],~i=1,\dots,N.$  
Three-dimensional curvature views of the numerical solution $u$ for $t\in[0,T_1]$ and 
and $u_t$ for $t\in [T^0_1,T_1]$ corresponding the left figure are given in Fig. 13. Values of $T^0_1= 1.848738391680962$ and 
$T_1= 1.848843391680954$ are used in the second frame. Again, the heat monotonically increases from left to right
until quenching at $P=(0.020653228859545, 1.848843391680954).$ On the other hand, the velocity $u_t$ 
maximum trajectory matches that of $u$ and 
explodes at the aforementioned quenching position $P$ in an extremely short period of time. Numerical solutions
in last 105 temporal steps immediately before quenching are used.

\begin{center}
{\epsfig{file=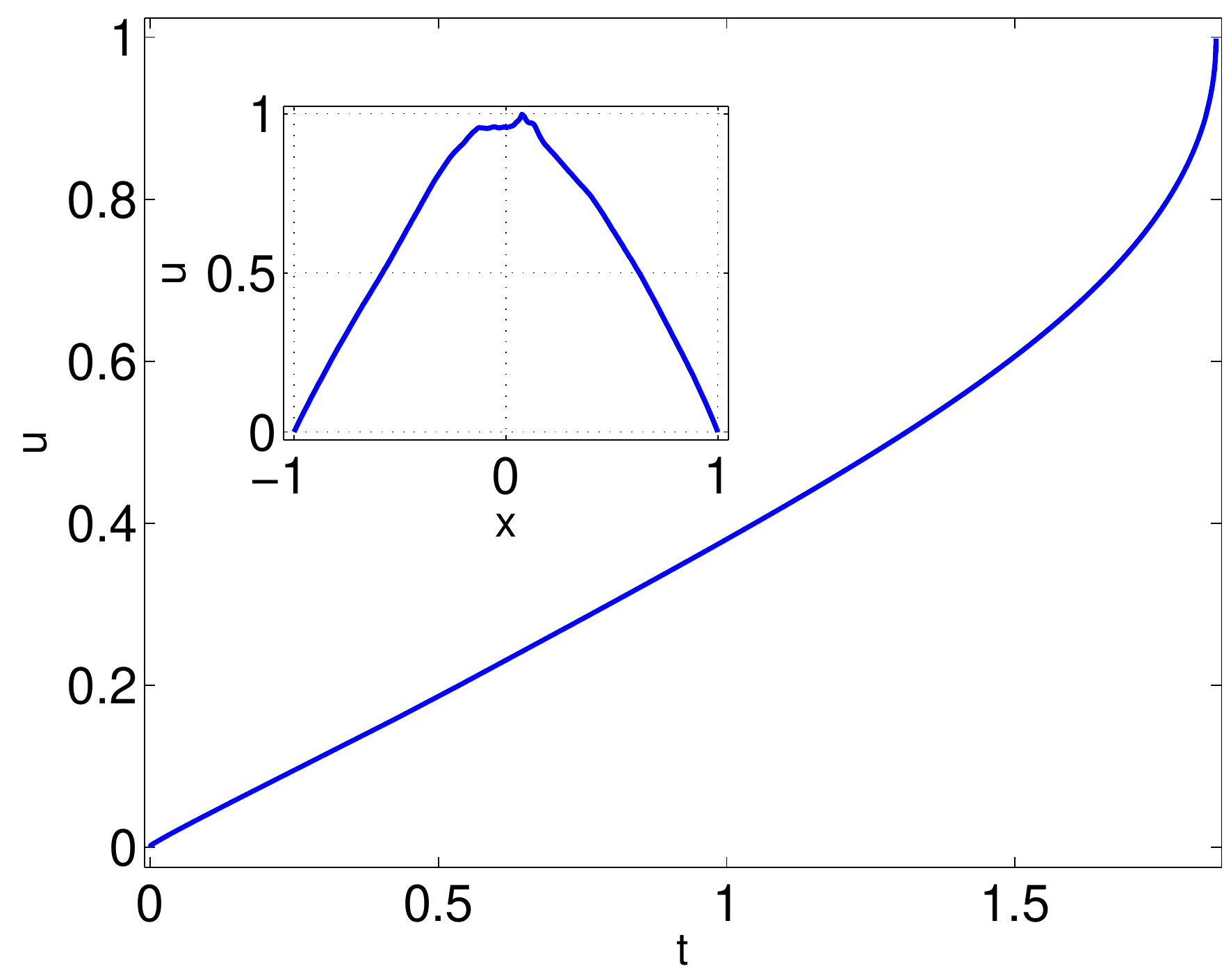,width=2.3in,height=1.28in}}
~~
{\epsfig{file=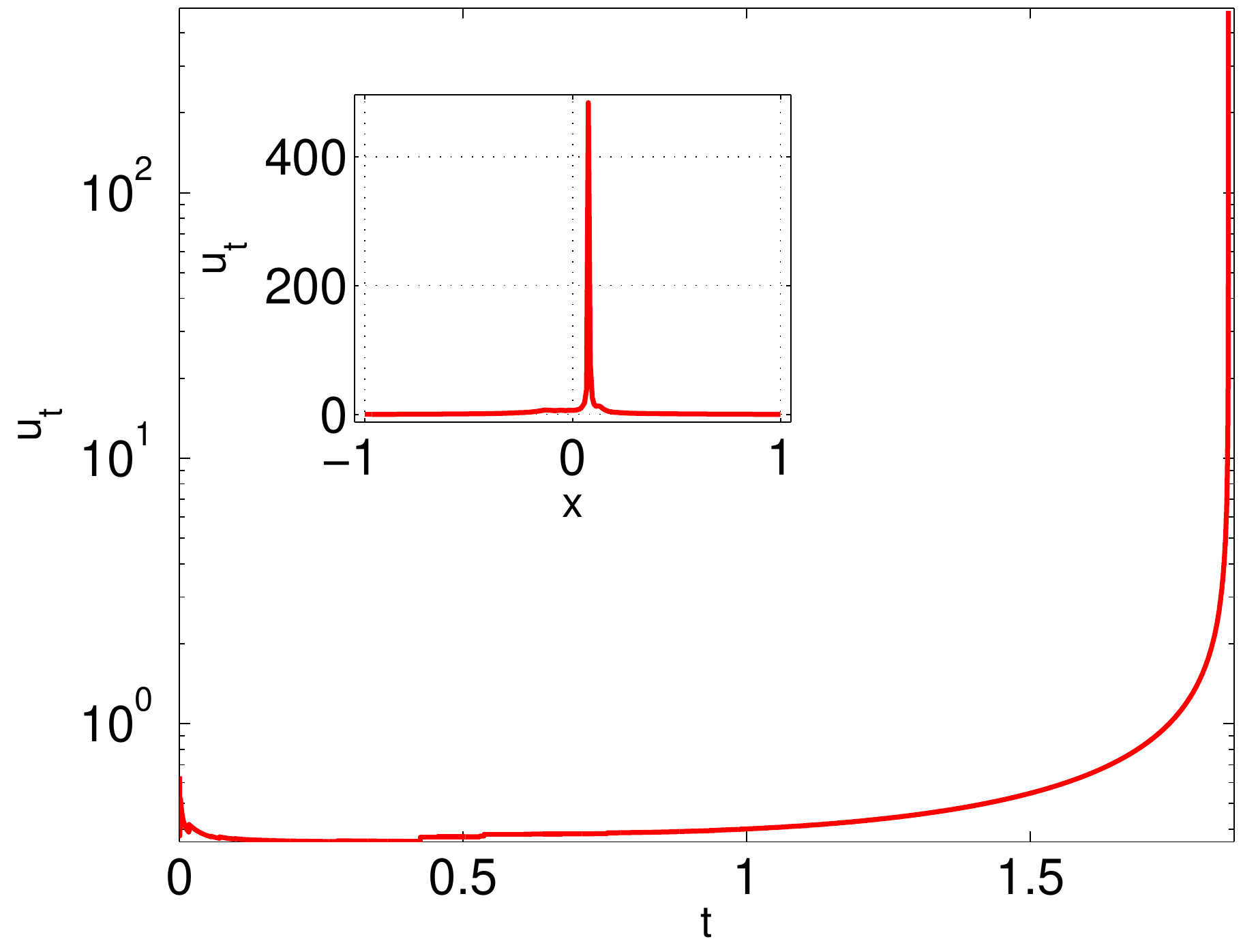,width=2.3in,height=1.28in}}

\parbox[t]{12.8cm}{\scriptsize{\bf Figure 14.} Profiles of the maximal values of the numerical solution $u$ [LEFT] and its derivative $u_t$ [RIGHT]. 
The two embedded pictures are for $u$ and $u_t$ in the last position immediately before 
quenching. }
\end{center}

Note from Fig. 13 and 14 how the location of quenching has shifted, just as in the degeneracy case, but this solution is 
slightly to the right of the origin. The most notable feature is the lack of an initial smoothness in the solution profile of $u.$
The temporal adaptation is trigged once $\textstyle\max_{-1\le x\le 1} u \approx 0.90$ and remains active throughout 
the remainder of computations. The quenching is found at $x^*=0.066585749194076$ and the quenching time is 
$T^{*}_1\approx 1.848843391680954.$

\begin{center}
{\epsfig{file=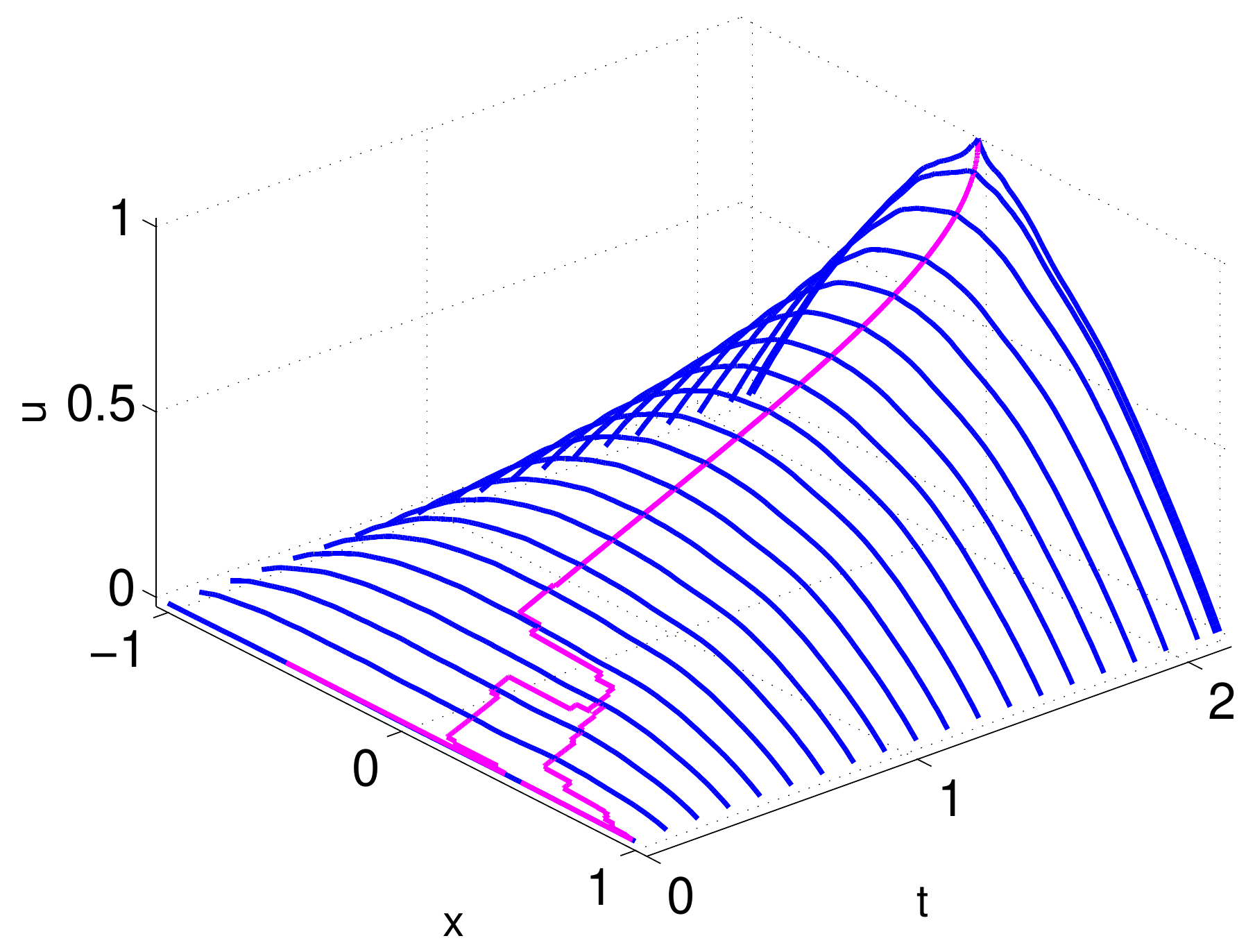,width=2.5in,height=1.38in}}
~~
{\epsfig{file=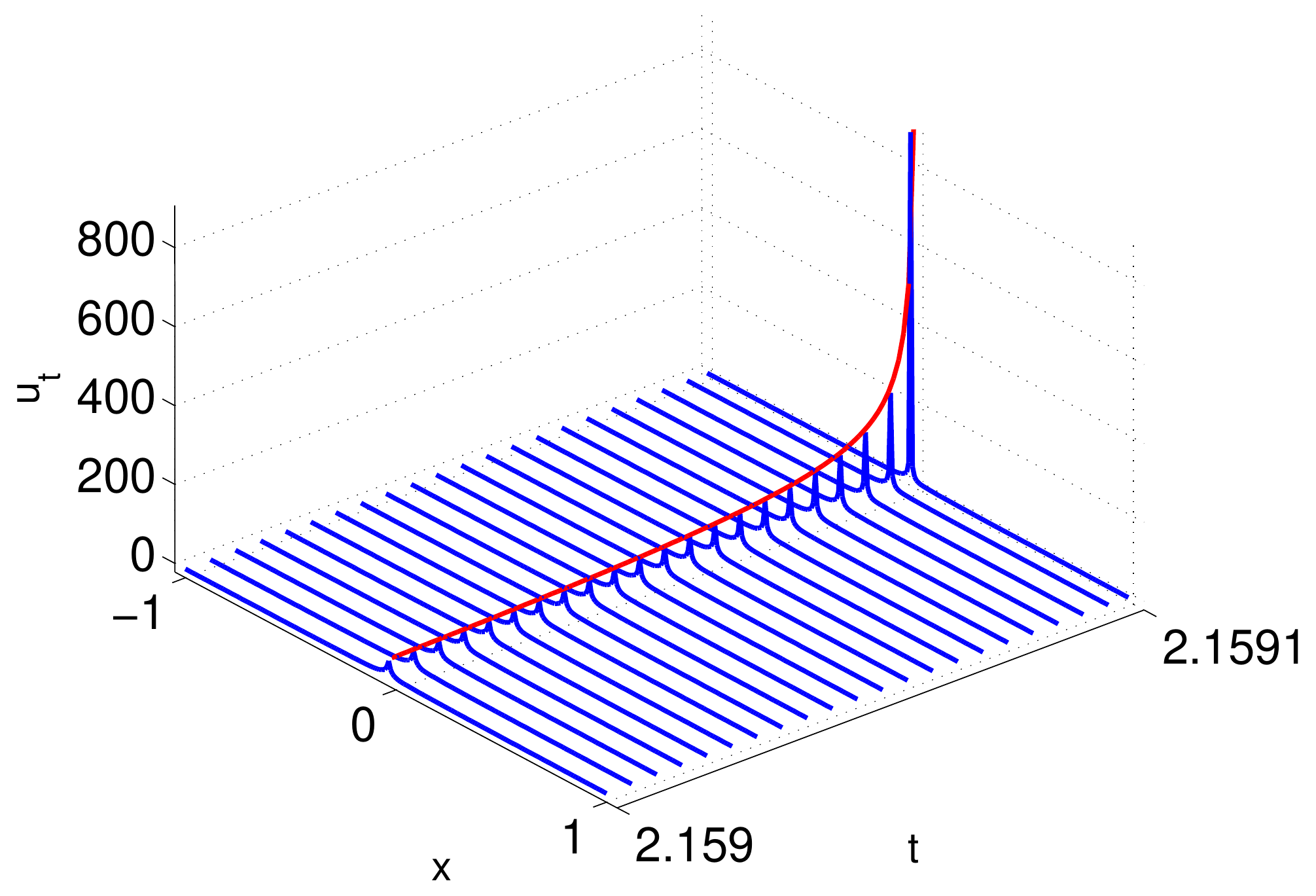,width=2.5in,height=1.38in}}
~~
\parbox[t]{12.8cm}{\scriptsize{\bf Figure 15.} Three-dimensional curvature views of $u$ [LEFT] and $u_t$ [RIGHT]. 
The blue lines represent profiles of $u$ and $u_t$ at different times, while the magenta and red lines 
represent $\textstyle\max_{-1\le x\le 1} u$ and $\textstyle\max_{-1\le x\le 1} u_t,$ respectively. The temporal 
derivative has its largest values concentrated about the quenching point with $\max u_t \approx 885.0797753481299.$}
\end{center}

\begin{center}
{\epsfig{file=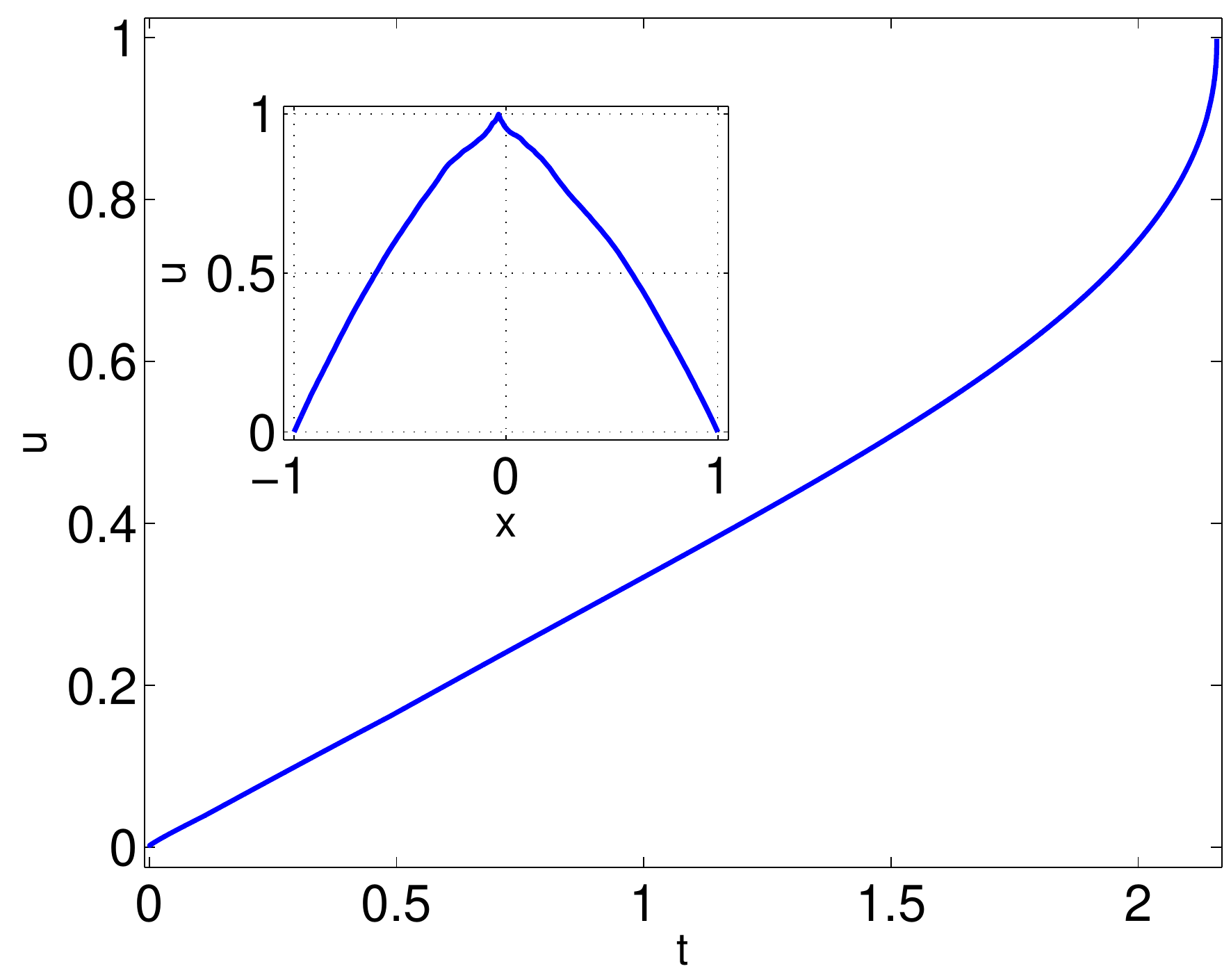,width=2.3in,height=1.28in}}
~~
{\epsfig{file=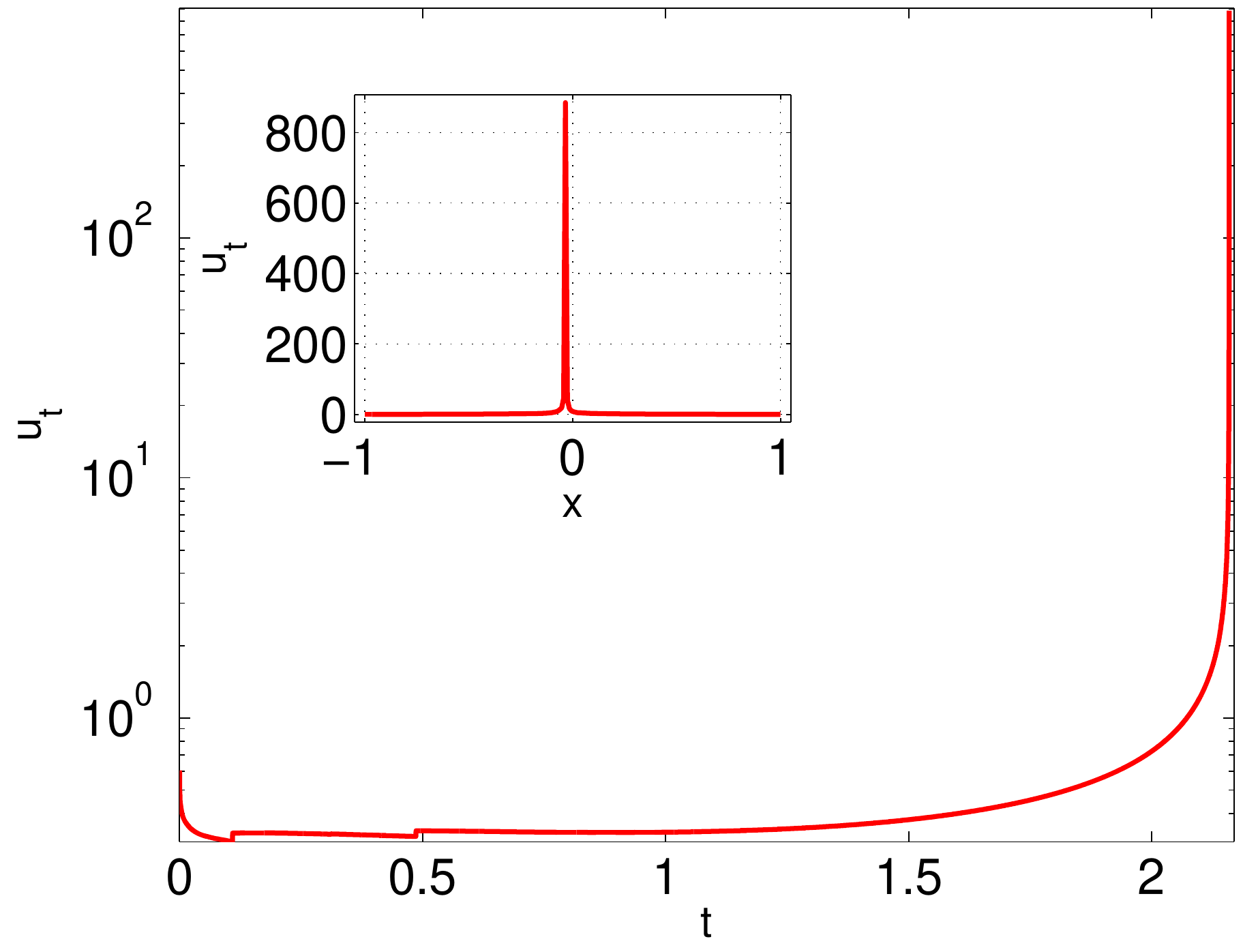,width=2.3in,height=1.28in}}

\parbox[t]{12.8cm}{\scriptsize{\bf Figure 16.} Profiles of the maximal values of the numerical solution $u$ [LEFT] and its derivative $u_t$ [RIGHT]. 
The two embedded pictures are for $u$ and $u_t$ in the last position immediately before quenching. 
The quenching location is $x^*=-0.043597691787030$ and the quenching time is $T^{*}_2\approx 2.159108137916605.$}
\end{center}

Fig. 15 and 16 are for the case when the second random output in Fig. 12 is selected. $T^0_2= 2.159003137916590$ and 
$T_2=2.159108137916605$ are utilized for $u_t.$ The heat $u$ flows smoothly and increases monotonically
until quenching at $P=(0.020653228859545,2.159108137916605).$ On the other hand, the trajectory of the maximal velocity of $u_t$ 
matches that of $u.$ It explodes at the aforementioned quenching position $P$ to the peak value. Numerical solutions
in last 105 temporal steps immediately before quenching are used for $u_t.$ It can also be seen 
that the stochastic term causes the maximum value of the solution to shift drastically. The temporal adaptation is activated 
once $\textstyle\max_{-1\le x\le 1} u \approx 0.90$ and remains active throughout the remainder of the computations. 

\begin{center}
{\epsfig{file=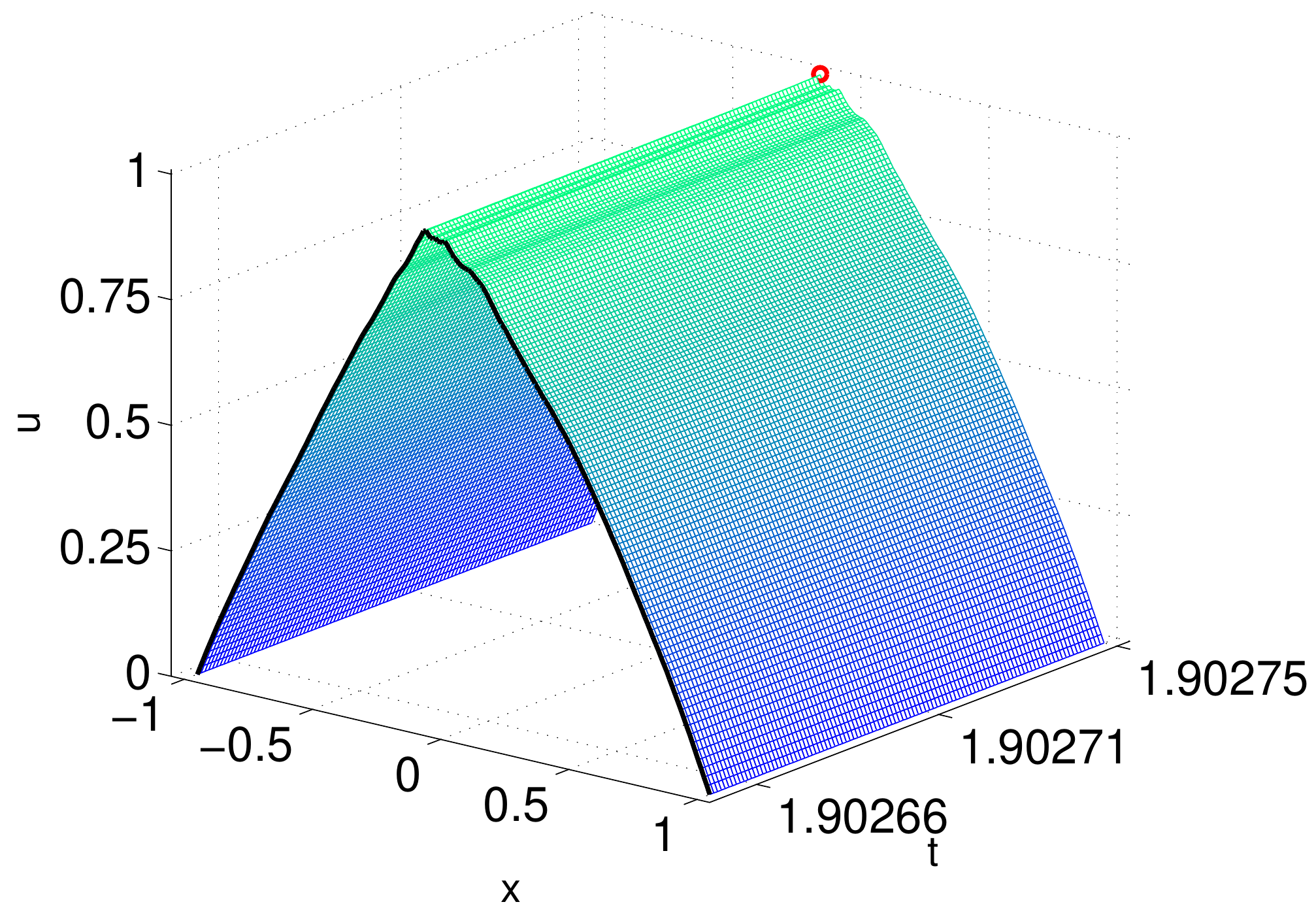,width=2.63in,height=1.6in}}
{\epsfig{file=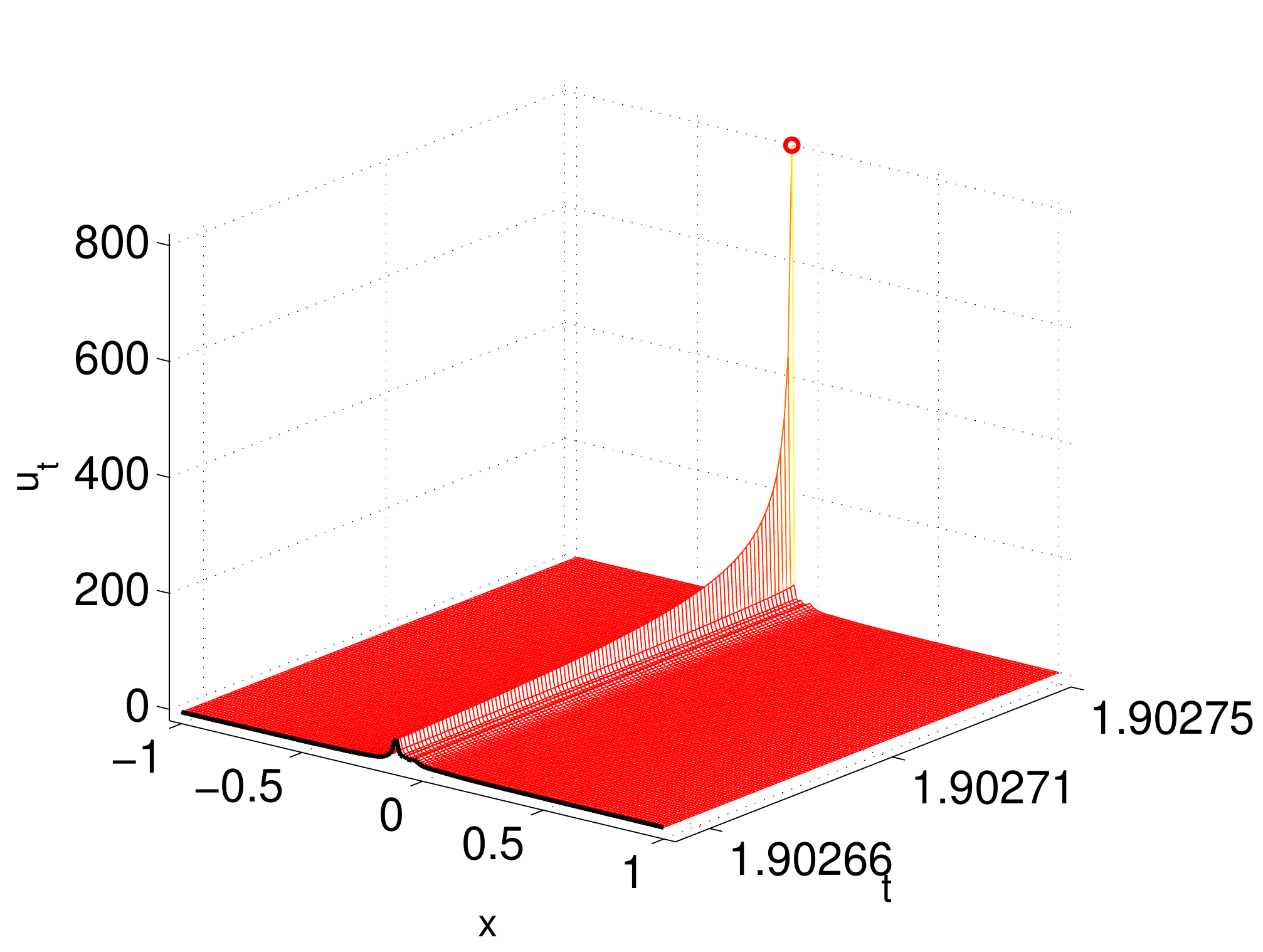,width=2.63in,height=1.6in}}
~~
\parbox[t]{12.8cm}{\scriptsize{\bf Figure 17.} More detailed three-dimensional surface views of $u$ [LEFT] 
and $u_t$ [RIGHT]. Both surfaces increase monotonically with respect to the time. 
The function $u$ and its temporal derivative have their largest values concentrated about the 
quenching point with $\max u\approx 0.998883860103612$ and $\max u_t \approx 799.5162246152710.$
Quenching time $T^*\approx 1.902752501571197.$}
\end{center}

To conclude this experiment, we rerun the second case for surface plots of the last 105 $t$-level numerical solutions 
before quenching in Fig. 17. Note that 
in all sets of figures the quenching time is considerably longer. This is due to the fact that the constraints 
on $\epsilon$ dampen the effects of the source term, thereby increasing the amount of time for a solution component to reach 
unity. Further note how small differences in the random function $\varphi(\epsilon)$ can shift the quenching location to either 
side of the origin. Further, it can be observed in Fig. 13-17 that the solution $u$ becomes quite non-smooth near the 
quenching point. We may also note how differently the maximum values of the solution 
$u$ flows with respect to time in Fig. 13, 15 and 17. While the maximal values converge to the appropriate spatial position fairly 
quickly, their paths to that point are extremely different. 

{\red Since the Lax equivalence theorem does not apply to nonlinear schemes such as \R{c3}, rigorous tests on the nonlinear
convergence are in general important. But our experiments are carried out in cases when nonlinear source terms are linearized, 
similar to those in \cite{Sheng4,Beau1,Beau11}. This linearization allows for the problem to be locally considered linear, and in 
this sense, convergence can be ensured through the numerical stability. Note that such an argument is only valid in the local 
sense, a global convergence analysis of the numerical method \R{c3}, or \R{approx}, must be conducted via chemical-physical 
energy conservations. Such an endeavor has been in our agenda and will be discussed carefully in our forthcoming papers.}

\subsection*{Experiment 4}

{\red We now consider the following two-dimensional problem:
\bbb
&&\sigma(x,y)u_t = \frac{1}{a^2}u_{xx}+\frac{1}{b^2}u_{yy}+\frac{\varphi(\varepsilon)}{(1-u)^\theta},\quad -1<x,y<1,~t_0<t\le T,\label{num11}\\
&&u(-1,y,t)=u(1,y,t)=u(x,-1,t)=u(x,1,t)=0,\quad t>t_0,\label{num22}\\
&&u(x,y,t_0)=u_0(x,y),\quad -1<x,y<1,\label{num33}
\eee
where $T<\infty,~\theta>0,~u_0(x,y)\in C[[-1,1]\times[-1,1]],$ and $0\leq u_0\ll 1.$ Without loss of generality, 
we choose $a=b=2,$ $\theta =1,$ $t_0=0,$ and $u_0(x,y) = 0.001(1-\cos(2\pi x))(1-\cos(2\pi y)),$ $-1\le x,y\le 1.$ 
For the simplicity we once again set $\sigma(x,y)\equiv 1,~0\le x,y\le 1,$ and $\varphi(\varepsilon) = \varepsilon^2,~0.01\le \varepsilon\le 1.$ 
We wish to demonstrate that the stochastic effects exhibited in the previous experiment remain to be significant in the two-dimensional case. 
To this end, we first approximate \R{num11}-\R{num33} via a standard nonuniform LOD method \cite{Beau1,Beau11,ADI}. 
\begin{center}
{\epsfig{file=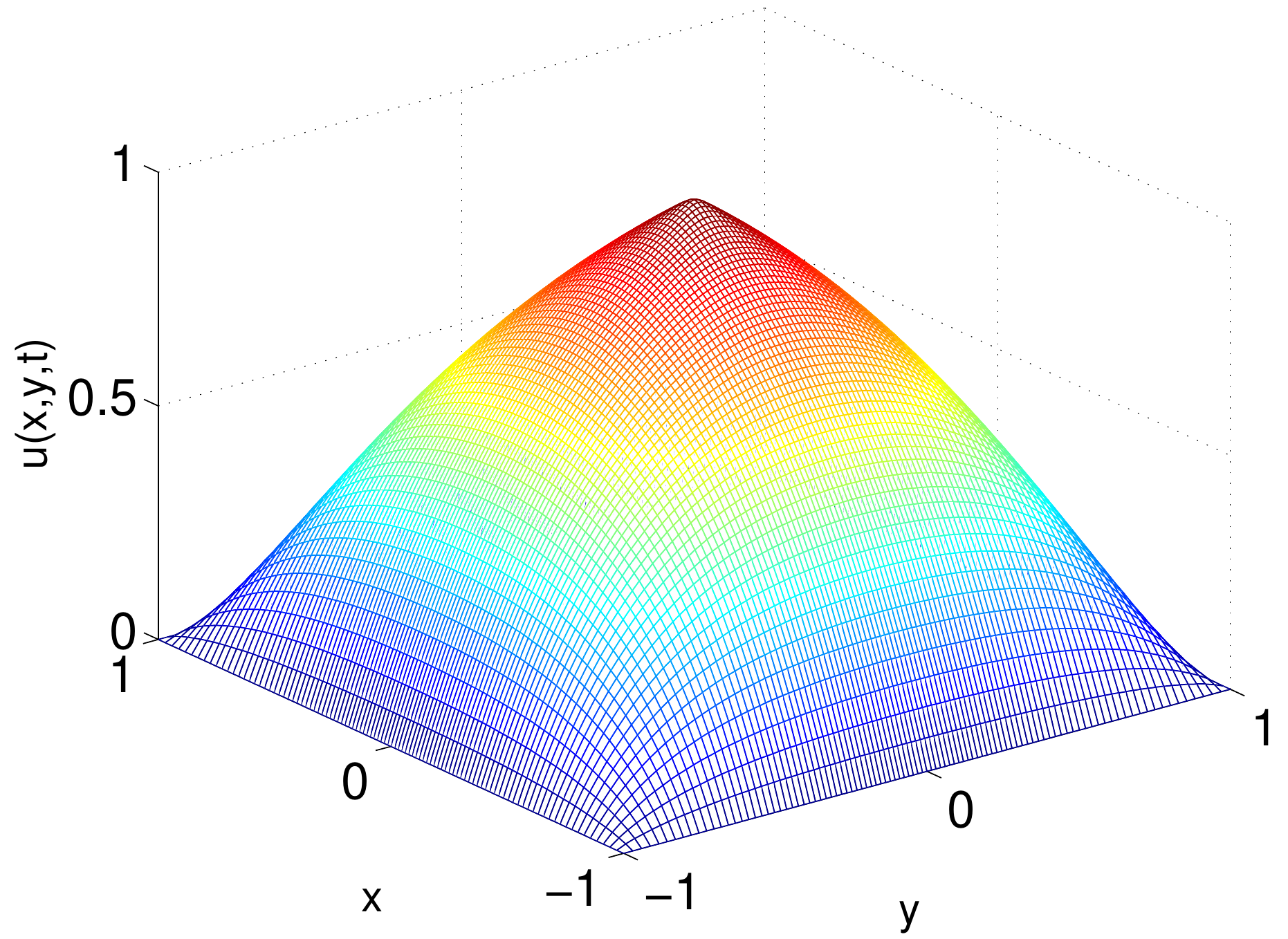,width=2.63in,height=1.66in}}
{\epsfig{file=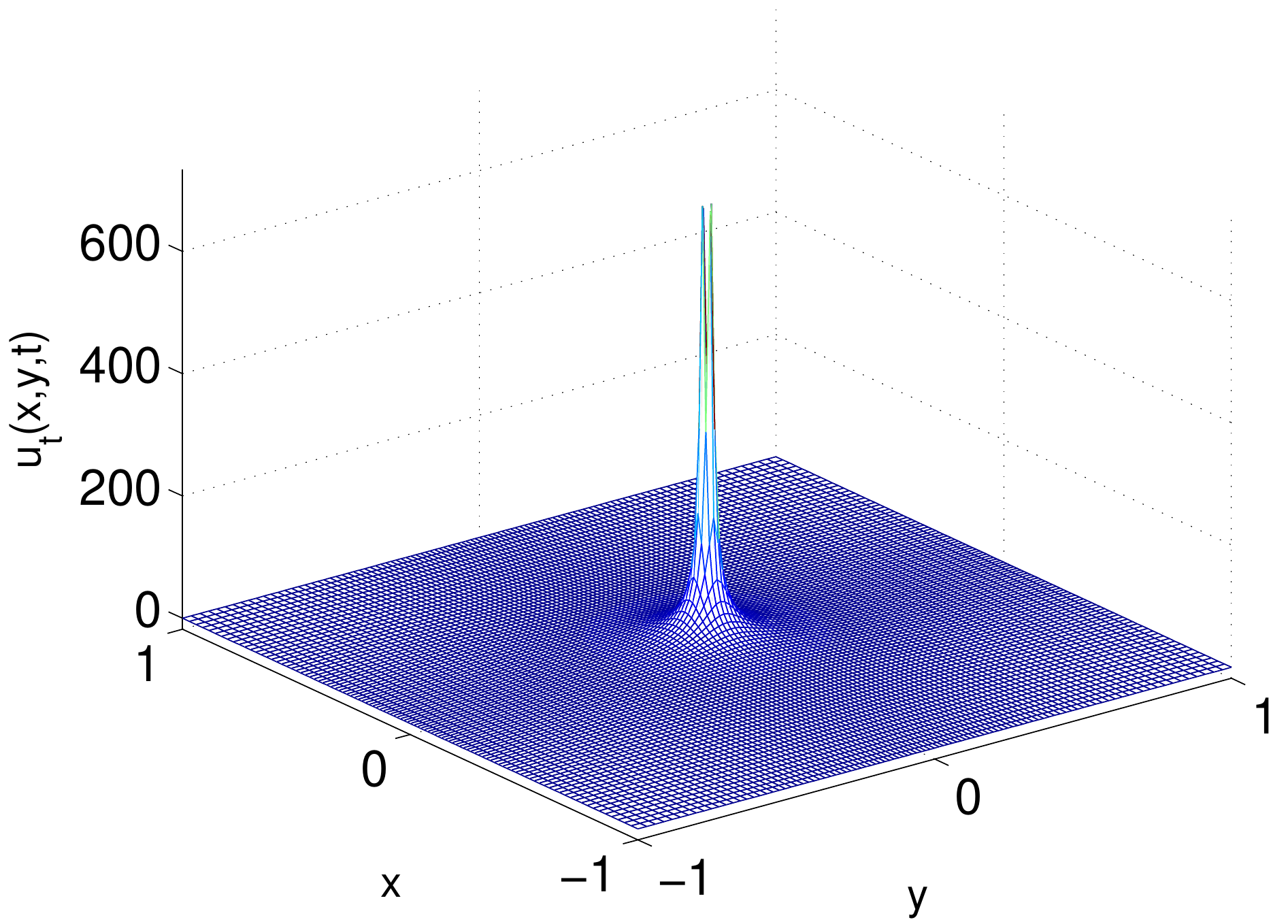,width=2.63in,height=1.66in}}
~~
\parbox[t]{12.8cm}{\scriptsize{\bf Figure 18.} Three-dimensional views of $u$ [LEFT] and $u_t$ [RIGHT] immediately prior 
to quenching. Quenching is observed to occur at $T\approx 2.564180137941836.$ The temporal derivative is observed to 
reach a maximal value $\textstyle\max u_t \approx 331.0481243466621.$ The blow-up of $u_t$ is concentrated around 
the observed spatial quenching point.}
\end{center}
Fig. 18 depicts the final profiles of the solution $u$ and its derivative $u_t,$ immediately prior to quenching. 
Quenching is observed to occur at $T\approx 2.564180137941836.$ We have experienced a slight increase in quenching times as
compared to existing results for $\varphi(\varepsilon)\equiv 1$ \cite{Sheng4,Beau1,Beau11}. 
\begin{center}
{\epsfig{file=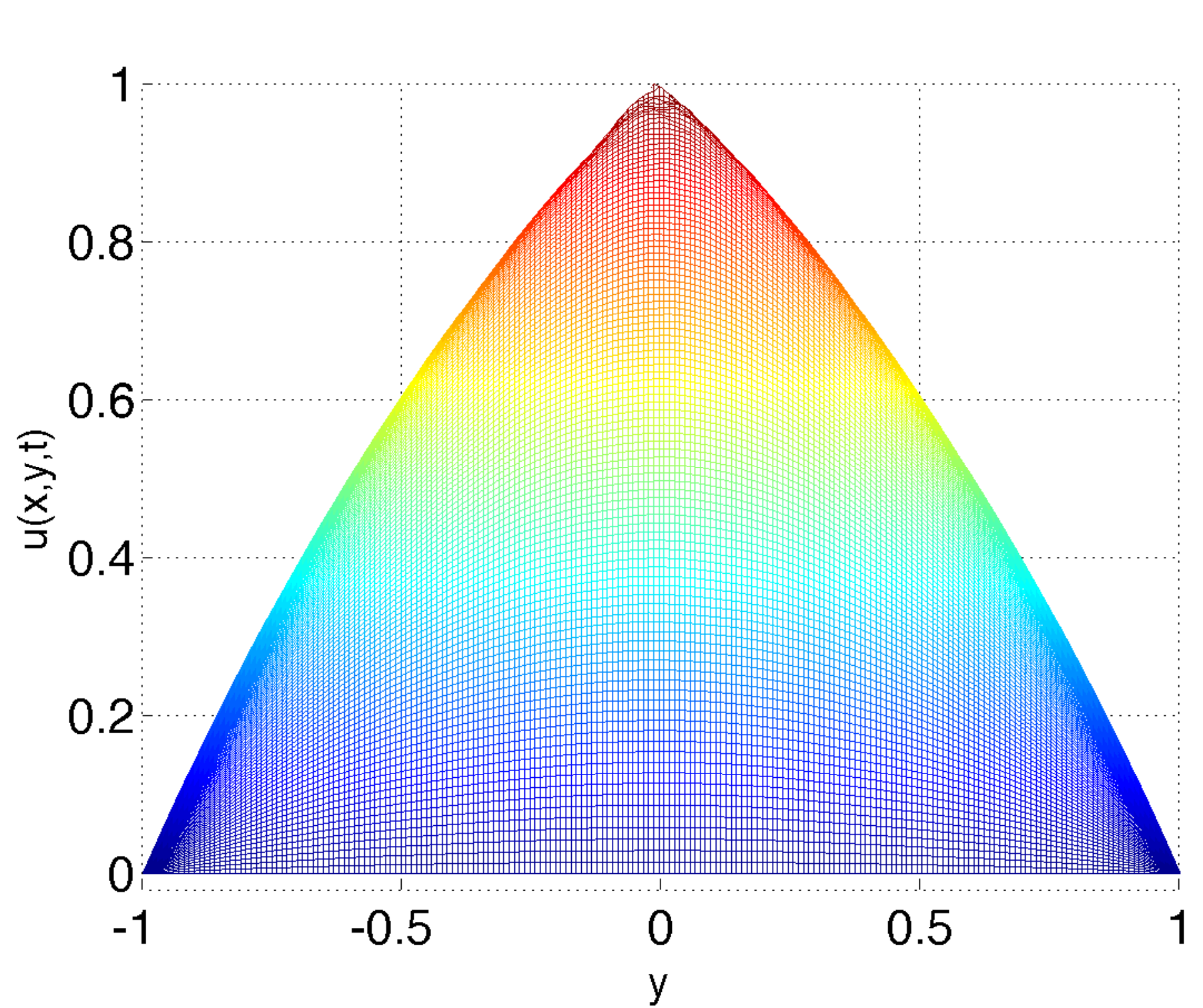,width=2.3in,height=1.28in}}
{\epsfig{file=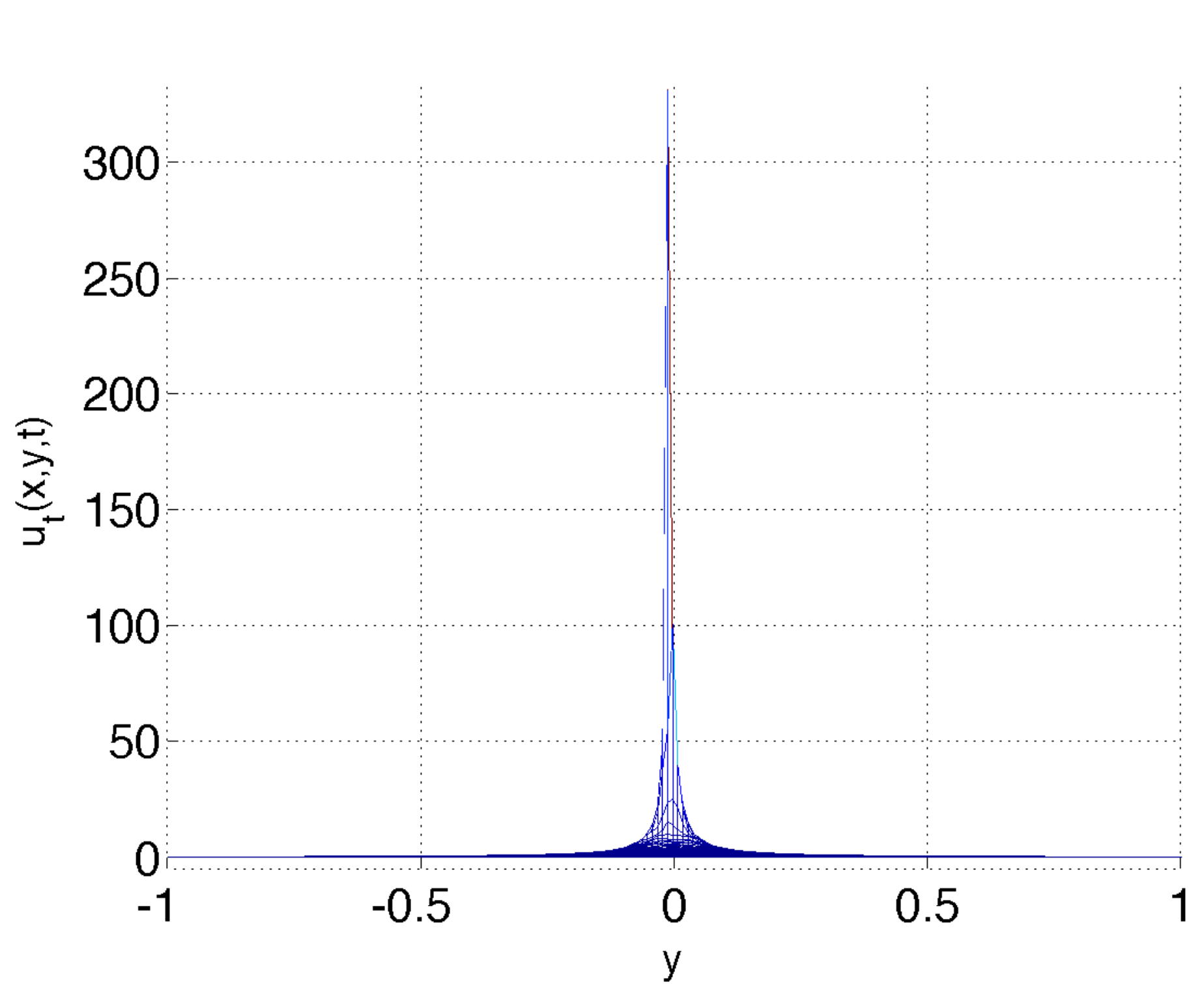,width=2.3in,height=1.28in}}
~~
\parbox[t]{12.8cm}{\scriptsize{\bf Figure 19.} Projections of $u$ [LEFT] and $u_t$ [RIGHT] onto the $xu$-plane and $xu_t$-plane 
immediately prior to quenching, respectively. A slight shift of the location of maximum values is observed. It is evident that the solution 
smoothness is once again affected slightly by the presence of the stochastic influence in reactions.}
\end{center}
\begin{center}
\begin{tabular}{c|c|c}\hline\hline
$k$ & $x_k^*$ & $y_k^*$\\ \hline
1     & 0.034825870646764 & -0.014925373134330\\ \hline
2     & 0.037037037037037 & -0.061728395061728\\ \hline
3     & 0.012345679012346 & -0.012345679012346\\ \hline
4     & 0.086419753086420 & 0.061728395061728\\ \hline
5     & -0.012345679012346 & 0.012345679012346\\ \hline\hline
\end{tabular}

\vspace{2mm}

\parbox[t]{12.8cm}{\scriptsize{\bf Table 1.} Experimented quenching locations $P_k^*=(x_k^*,y_k^*),~k=1,\ldots,5,$ 
due to stochastic influences. Drifting effects can be clearly observed.}
\end{center}

Fig. 19 depicts projections of $\textstyle\max_{-1\le y\le 1} u(x,y,t)$ and $\textstyle\max_{-1\le x\le 1} u_t(x,y,t),$ respectively. 
In the absence of stochastic influences, the theoretical spatial quenching location should be $P=(0,0).$ But we have observed 
$P^* = (0.034825870646764 , -0.014925373134330)$ to be a shifted spatial quenching position under the influence of 
present stochastic source term. Repeated experiments with varying stochastic influence functions suggest 
the same phenomenon. We list results of five randomly selected stochastic functions in Table 1 as an illustration.
We note similarities between the plots in Fig. 19 and those for one-dimensional cases in quenching
location disturbances. This is to be expected, as the results demonstrate that the propagation of the nonsmooth feature 
is not limited to one-dimensional situations only. 
}

\section{Conclusions} \clearallnum

A nontraditional Crank-Nicolson method for solving the nonlinear
stochastic Kawarada differential equation is proposed and studied. 
{\red Conventional uniform or symmetric mesh structures are replaced successfully by fully arbitrary 
grids in the space.} Temporal adaptation is incorporated in order to effectively capture
the strong quenching-combustion singularity and degeneracy built with the nonlinear partial differential equation.
Key properties of the numerical method developed, including the solution positivity, monotonicity,
and stability, are investigated and proven. Stability conditions determined are nonrestrictive. Stochastic impacts 
through the source term are examined and discussed carefully through simulation experiments.

Although linear stability analysis has been effective in the study of numerical solution of
quenching problems while nonlinear source terms are frozen in implicit
schemes \cite{Beau1,Josh2,Sheng3,Josh1,Twi}, an improved semi-linear stability analysis is introduced
and conducted. This modified analysis depends upon the Jacobian of the nonlinear reaction term of the 
Kawarada equation. It is found that the constraints required to guarantee the positivity and monotonicity 
of the underlying nonuniform numerical method are sufficient for ensuring the aforementioned semi-linear 
stability. This sheds further insights as to reasons why a linear stability analysis is often 
adequate in realistic computational applications. 

In our numerical experiments we have studied effects of the size of spatial domain on quenching time, 
which seems to suggest a possible optimal domain size due to the minimum quenching time observed. Further, we have 
explored the effects of the stochastic source term on overall solutions. Computational experiments indicate that different 
nonlinear source terms may have impacts on not only quenching time and location, but also the smoothness of the 
quenching solution profile \cite{Levine,Acker1,Sheng3}. {\red A two-dimensional experiment is also presented
to verify the potential of the semi-adaptive infrastructure introduced in this study, as well as verify the
effects of a stochastic source influence in higher-dimensional cases.}
Our future endeavors include studying stochastic influences which vary with respect to both time and space.
Multi-dimensional Kawarada problems will also be approximated via the latest operator splitting strategies \cite{Sheng4,Josh2}. 
{\red On the other hand, exponential time differencing schemes, such as those explored in \cite{Khaliq,Khaliq2}, 
will be considered in the near future together with proper adaptations. We also plan to extend our investigations of literature
by including balanced fractional derivatives in order to more precisely capture and explore global features 
of the numerical combustion \cite{Pagnini}.}

\section*{Acknowledgements}
\noindent 
The authors would like to thank the anonymous referees for their time and thorough comments. 
Integrating their suggestions has undoubtedly elevated the quality and presentation of this paper.

The second author particularly appreciates Wes Johnson and  Mike Hutcheson of the 
Baylor University Academic and Research Computing Services
for computational validations and technical support.


\begin{thebibliography}{xx}\small

\bibitem{Chan2} C.\,Y. Chan and L. Ke, Parabolic quenching for nonsmooth
convex domains, {\em J. Math. Anal. Appl.,\/} {\bf 186} (1994), 52--65.

\bibitem{Levine} H.\,A. Levine, Quenching, nonquenching, and beyond
quenching for solutions of some parabolic equations, {\em Ann. Math. Pure. Appl.,\/}
{\bf 4} (1989), 243--260.

\bibitem{Evans} L.\,C. Evans, {\em Partial Differential Equations,\/}
American Mathematical Society, Providence, 1998.

\bibitem{Acker2} A. Acker and B. Kawohl, Remarks on quenching,
{\em Nonlinear Anal.,\/} {\bf 13} (1989), 53--61.

\bibitem{Acker1}  T.\,K. Boni and T.\,K. Kouakou, Continuity of the quenching time
in a semilinear heat equation with a potential, {\em Rev. Colombiana Mate.,\/} 
{\bf 43} (2009), 55--70.

\bibitem{Kawa} H. Kawarada, On solutions of initial-boundary value
problems for $u_t=u_{xx}+1/(1-u),$ {\em Publ. Res. Inst. Math. Sci.,\/}
{\bf 10} (1975), 729--736.

\bibitem{Bebernes_89} J. Bebernes and D. Eberly,
{\em Mathematical Problems from Combustion Theory,\/} Springer-Verlag,
Berlin and New York, 1989.

\bibitem{Sheng4} Q. Sheng and {\red A.\,Q.\,M.} Khaliq, Linearly Implicit Adaptive
Schemes for Singular Reaction-Diffusion Equations, Chapter 9, {\em Adaptive
Method of Lines,\/} (edi. A.\,V. Wouwer, Ph. Saucez and W.\,E. Schiesser)
Capman \& Hall/CRC, London and New York, 2001.

\bibitem{Chan3} C.\,Y. Chan and P.\,C. Kong, Channel flow of a viscous fluid 
in the boundary layer, {\em Quart. Appl. Math.,\/} {\bf 55} (1997), 51--56.

\bibitem{Beau1} M. Beauregard and Q. Sheng, An adaptive splitting approach for the
quenching solution of reaction-diffusion equations over nonuniform grids,
{\em Journal of Comp. and Applied Math.,\/} {\bf 241} (2013), 30--44

\bibitem{Josh2} J.\,L. Padgett and Q. Sheng, On the stability of a variable
step exponential splitting method for solving multidimensional
quenching-combustion equations, {\em Springer Proc. Math. Stat.,\/}
{\bf 171} (2016), 155--167.

\bibitem{Sheng3} Q. Sheng and {\red A.\,Q.\,M.} Khaliq, A revisit of the semi-adaptive method for singular degenerate
reaction-diffusion equations, {\em East Asia J. Appl. Math.,\/} {\bf 2} (2012), 185--203.

\bibitem{Khaliq} {\red H.\,P. Bhatt and A.\,Q.\,M. Khaliq, A compact fourth-order $L$-stable 
scheme for reaction diffusion systems with nonsmooth data, {\em J. Comp. Appl. Math.,\/} {\bf 299} (2016), 176--193.}

\bibitem{Khaliq2} {\red A.\,Q.\,M. Khaliq, J. Mart{\'i}n-Vaquero, B.\,A. Wade and M. Yousuf, 
Smoothing Schemes for reaction-diffusion systems with nonsmooth data. {\em J. Comp. Appl. Math.,\/} {\bf 223} (2009), 374--386.}

\bibitem{Lang2} J. Lang and A. Walter, An adaptive Rothe method for nonlinear
reaction-diffusion systems, {\em Appl. Numer. Math.,\/} {\bf 13} (1993), 135--146.

\bibitem{Fur} R.\,M. Furzeland, J.\,G. Verwer and P.\,A. Zegeling, A
numerical study of three moving-grid methods for one-dimensional partial differential
equations which are based on the method of lines, {\em J. Comput. Phys.,\/}
{\bf 89} (1990), 349--388.

\bibitem{Josh1} J.\,L. Padgett and Q. Sheng, On the positivity, monotonicity,
and stability of a semi-adaptive LOD method for solving three-dimensional
degenerate Kawarada equations, {\em J. Math. Anal. Appls,\/} {\bf 439} (2016), 465--480.

\bibitem{Hen} {\red P. Henrici, {\em Discrete Variable Methods in Ordinary 
Differential Equations,\/} John Wiley \& Sons, Inc., New York, 1962.}

\bibitem{Golub} G.\,H. Golub and C.\,F. Van Loan, {\em Matrix Computations,\/}
Johns Hopkins University Press, Baltimore and London, 3rd Ed., 1996.

\bibitem{Beau11} {\red M.\,A. Beauregard and Q. Sheng, A semi-adaptive compact splitting method for 
the numerical solution of 2-dimensional quenching problems, {\em Appl. Math. Comput.\/} {\bf 218} (2012), 1240--1254.}

\bibitem{Twi} E.\,H. Twizell, Y. Wang and W.\,G. Price, Chaos-free
numerical solutions of reaction-diffusion equations, {\em Proc. Roy. Soc. London
Sect. A,\/} {\bf 430} (1991), 541--576.

\bibitem{ADI} Q. Sheng, ADI Methods, {\em Encyclopedia of Applied and Computational Mathematics,\/} 
(edi. B. Engquist) Springer Verlag GmbH, Heidelberg, 2015.

\bibitem{Pagnini} G. Pagnini, Noninear time-fractional differential equations in combustion 
science, {\em Frac. Calc. Appl. Anal.,\/} {\bf 14} (2011), 80--93.
\end{thebibliography}
\end{document}